\newtheorem{thm}{Theorem}[section]
\newtheorem*{thm*}{Theorem}
\newtheorem{lemma}[thm]{Lemma}
\newtheorem{prop}[thm]{Proposition}
\newtheorem{cor}[thm]{Corollary}
\newtheorem*{cor*}{Corollary}
\theoremstyle{definition}
\newtheorem{defn}[thm]{Definition}
\newtheorem{example}[thm]{Example}
\theoremstyle{remark}
\newtheorem{remark}[thm]{Remark}
\newcommand {\real}  {\ensuremath{\mathbb{R}}}
\newcommand {\intg}  {\ensuremath{\mathbb{Z}}}
\newcommand {\cplx}  {\ensuremath{\mathbb{C}}}
\newcommand {\Hom}   {\ensuremath{\operatorname{Hom}}}
\newcommand {\cok}   {\operatorname{coker}}
\newcommand {\im}    {\operatorname{im}}
\newcommand {\codim} {\ensuremath{\operatorname{codim}}}
\newcommand {\ip}    {\ensuremath{I^{\overline{p}}}}
\newcommand {\iq}    {\ensuremath{I^{\overline{q}}}}
\newcommand {\imi}   {\ensuremath{I^{\overline{m}}}}
\newcommand {\redh}  {\ensuremath{\widetilde{H}}}
\newcommand {\loc}   {\ensuremath{\operatorname{loc}}}
\newcommand {\cone}  {\ensuremath{\operatorname{cone}}}
\newcommand {\pt}    {\ensuremath{\operatorname{pt}}}
\newcommand {\id}    {\ensuremath{\operatorname{id}}}
\newcommand {\ft}   {\ensuremath{\operatorname{ft}}}
\newcommand {\hft}   {\ensuremath{\widehat{\operatorname{ft}}}}
\newcommand {\dR}  {\ensuremath{\operatorname{dR}}}
\newcommand {\inc}   {\ensuremath{\operatorname{inc}}}
\newcommand{\calH}{\mathcal{H}}
\newcommand {\dS}{\ensuremath{\tilde{d}_\Sigma}}
\newcommand {\dL}{\ensuremath{\tilde{d}_L}}
\newcommand{\delL}{\tilde{\delta}_L}
\newcommand{\del}{\partial}
\newcommand{\olM}{\overline{M}}
\newcommand{\olN}{\overline{N}}
\newcommand{\starL}{\tilde{*}_L}
\newcommand{\olm}{{\overline{m}}}
\newcommand{\oln}{{\overline{n}}}
\newcommand{\olp}{{\overline{p}}}
\newcommand{\olq}{{\overline{q}}}
\newcommand{\onto}{\twoheadrightarrow}
\newcommand{\R}{\mathbb{R}}
\begin{document}


\title{Hodge Theory For Intersection Space Cohomology}

\author{Markus Banagl}

\address{Mathematisches Institut, Universit\"at Heidelberg,
  Im Neuenheimer Feld 288, 69120 Heidelberg, Germany}

\email{banagl@mathi.uni-heidelberg.de}

\thanks{The first author was in part supported by a research grant of the
 Deutsche Forschungsgemeinschaft.}

\date{February, 2015}

\author{Eug\'enie Hunsicker}

\address{Department of Mathematical Sciences, Loughborough University, Loughborough LE11 3TU, UK}

\email{E.Hunsicker@lboro.ac.uk}

\subjclass[2010]{55N33, 58A14} 

\keywords{Hodge theory, stratified spaces, pseudomanifolds, Poincar\'e duality,
intersection cohomology, $L^2$ spaces, harmonic forms, fibred cusp metrics, scattering metrics,
signature, conifold transition}


\begin{abstract}
Given a perversity function in the sense of intersection homology theory,
the method of intersection spaces assigns to certain oriented stratified spaces
cell complexes whose ordinary reduced homology with real coefficients satisfies
Poincar\'e duality across complementary perversities.
The resulting homology theory is well-known not to be isomorphic to
intersection homology. For a two-strata pseudomanifold with product link bundle, 
we give a description of the cohomology of
intersection spaces as a space of weighted $L^2$ harmonic forms on the regular part,
equipped with a fibred scattering metric. Some consequences of our methods for the signature
are discussed as well.
\end{abstract}

\maketitle


\tableofcontents


\section{Introduction}

Classical approaches to Poincar\'e duality on singular spaces are Cheeger's
$L^2$ cohomology with respect to suitable conical metrics on the regular part of the space
(\cite{cheeger1}, \cite{cheeger2}, \cite{cheeger3}),
and Goresky-MacPherson's intersection homology, depending on a perversity parameter.
Cheeger's Hodge theorem asserts that the space of $L^2$ harmonic forms on the regular
part is isomorphic to the linear dual of intersection homology for the middle perversity, at least
when $X$ has only strata of even codimension, or more generally, is a so-called Witt space.

More recently, the first author has introduced and investigated a different, spatial perspective
on Poincar\'e duality for singular spaces (\cite{Ba-HI}).
This approach associates to certain classes of singular spaces $X$ a cell complex
$\ip X$, which depends on a perversity $\bar{p}$ and is called an \emph{intersection space}
of $X$. Intersection spaces are required to be generalized geometric Poincar\'e complexes in the
sense that when $X$ is closed and oriented, there is a Poincar\'e duality isomorphism
$\redh^{i} (\ip X;\real) \cong \redh_{n-i} (\iq X;\real)$, where
$n$ is the dimension of $X$, $\olp$ and $\olq$ are complementary perversities in the sense
of intersection homology theory, and $\redh^*, \redh_*$ denote reduced singular (or cellular)
cohomology and homology, respectively. The present paper is concerned with $X$ that have two strata such that
the bottom stratum has a trivializable link bundle. The construction of intersection spaces
for such $X$, first given in Chapter 2.9 of \cite{Ba-HI},
is described here in more detail in Section \ref{sec.intsp}. The fundamental principle,
even for more general $X$, is to replace links by their Moore approximations, a concept 
from homotopy theory Eckmann-Hilton dual to the concept of Postnikov approximations.
The resulting (co)homology theory 
$HI^*_{\olp} (X) = H^* (\ip X;\real),$ $HI_*^{\olp} (X) = H_* (\ip X;\real)$
is \emph{not} isomorphic to intersection (co)homology 
$IH^*_{\olp} (X;\real), IH_*^{\olp} (X;\real)$. The theory $HI^*$ has had applications in
fiber bundle theory and computation of equivariant cohomology (\cite{Ba-equiv}),
K-theory (\cite[Chapter 2.8]{Ba-HI}, \cite{spiegel}),
algebraic geometry
(smooth deformation of singular varieties (\cite{bmjta}, \cite{bmjs}), perverse sheaves \cite{bbm},
mirror symmetry \cite[Chapter 3.8]{Ba-HI}),
and theoretical Physics (\cite[Chapter 3]{Ba-HI}, \cite{bbm}).
Note for example that the approach of intersection spaces makes it straightforward
to define intersection $K$-groups by $K^* (\ip X)$.
These techniques are not accessible to classical intersection cohomology.
There are also applications to Sullivan formality of singular spaces:
Given a perversity $\bar{p}$, call a pseudomanifold $X$ \emph{$\bar{p}$-intersection formal}
if $\ip X$ is formal in the usual sense. Then the results of \cite{bmjta}
show that under a mild torsion-freeness hypothesis on the homology of links, 
complex projective hypersurfaces $X$ with only isolated singularities,
whose monodromy operators on the cohomology of the associated Milnor fibers are trivial, 
are 
middle-perversity ($\bar{m}$) intersection formal, since there is an algebra isomorphism from
$HI^*_{\bar{m}} (X)$ to the ordinary cohomology algebra of a nearby smooth deformation, which
is formal, being a K\"ahler manifold. This agrees nicely with the result of
\cite[Section 3.4]{cst}, where it is shown that any nodal hypersurface in $\cplx \mathbb{P}^4$
is ``totally'' (i.e. with respect to an algebra that involves all perversities at once) intersection formal.\\

A de Rham description of $HI^*_{\olp} (X)$ has been given in \cite{Ba-new}
for two-strata spaces whose link bundle is flat with respect to the isometry group 
of the link. Under this assumption, a subcomplex $\Omega I_\olp^* (M)$
of the complex $\Omega^* (M)$ of all smooth differential forms on the
top stratum $M=X-\Sigma,$ where $\Sigma \subset X$ is the singular set, has been defined
such that for isolated singularities there is a de Rham isomorphism
$HI^*_{\dR,\olp} (X) \cong \redh^* (\ip X;\real)$, where
$HI^j_{\dR, \olp} (X) = H^j (\Omega I_\olp^*(M))$. This result has been
generalized by Timo Essig to two-strata spaces with product link bundle
in \cite{Ess}. In \cite{Ba-new} we prove furthermore that wedge product followed by integration
over $M$ induces a nondegenerate intersection pairing
$\cap_{HI}: HI^j_{\dR,\olp} (X) \otimes HI^{n-j}_{\dR,\olq} (X) \to \real$
for complementary $\olp$ and $\olq$. The construction of $\Omega I_\olp^* (M)$
for the case of a product link bundle (i.e. the case relevant to this paper)
is reviewed here in detail in Section \ref{ssec.derhamcplxhi}. \\

In the present paper, we find for every perversity $\bar{p}$ 
a Hodge theoretic description of the theory $HI^*_{\olp} (X)$;
that is, we find a Riemannian metric on $M$ (which is very different from Cheeger's class of
metrics) and a suitable space of $L^2$ harmonic forms with respect to this metric
(the \emph{extended weighted} $L^2$ harmonic forms for suitable weights), such that
the latter space is isomorphic to $\redh I^*_{\olp} (X)\cong HI^j_{\dR, \olp} (X)$.
Assume for simplicity that $\Sigma$ is connected.
If $L$ denotes the link of $\Sigma$ in $X$ and $\olM$ is the compact manifold with
boundary $\partial \olM = L\times \Sigma$ and interior $M$ (called the ``blowup'' of $X$),
then a metric $g_{fs}$ on $M$ is called a product type fibred scattering metric if near
$\partial \olM$ it has the form
\[ g_{fs} = \frac{dx^2}{x^4} + g_\Sigma + \frac{g_L}{x^2}, \]
where $g_L$ is a metric on the link and $g_\Sigma$ a metric on the singular set,
see Section \ref{sec.notation}. If $\Sigma$ is a point, then $g_{fs}$ is a scattering metric.

Given a weight $c$, weighted $L^2$ spaces $x^c L^2_{g_{fs}}\Omega_{g_{fs}}^*(M)$
are defined in Section \ref{sec.hodgehi}.
The space $\mathcal{H}_{ext}^* (M, g_{fs}, c)$ of extended weighted $L^2$ harmonic
forms on $M$ consists of all those forms $\omega$ which are in the kernel of
$d+\delta$ (where $\delta$ is the formal adjoint of the exterior derivative $d$ and depends
on $g_{fs}$ and $c$) and in $x^{c-\epsilon} L^2_{g_{fs}}\Omega_{g_{fs}}^*(M)$
for every $\epsilon >0$, cf. Definition \ref{def.extforms}.
Extended $L^2$ harmonic forms are already present in Chapter 6.4 of Melrose's 
monograph \cite{meaps}.
Then our Hodge theorem is:

\begin{thm}\label{hodge}
Let $X$ be a (Thom-Mather) stratified pseudomanifold with smooth, connected singular stratum $\Sigma \subset X$.  Assume that
the link bundle $Y \to \Sigma$ is a product $L \times \Sigma \to \Sigma$, where $L$ is a smooth manifold
of dimension $l$.
Let $g_{fs}$ be an associated product type fibred scattering metric on $M = X - \Sigma$.  Then 
\[
 HI^*_{\dR, \olp} (X)\cong \mathcal{H}_{ext}^*\left(M, g_{fs}, \frac{l-1}{2}-\olp(l+1)\right) .
\]
\end{thm}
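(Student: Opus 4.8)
The plan is to route the isomorphism through weighted $L^2$-cohomology. The first, analytic, step is to set up the Fredholm and Hodge theory for the operator $d+\delta$ of the product type fibred scattering metric $g_{fs}$ on the weighted spaces $x^a L^2_{g_{fs}}\Omega_{g_{fs}}^*(M)$. For all weights $a$ outside a discrete set of \emph{critical} (indicial) values the fibred scattering Laplacian is Fredholm on the appropriate weighted Sobolev spaces, its weighted $L^2$-cohomology is finite dimensional, and a Hodge decomposition identifies the reduced weighted $L^2$-cohomology with the space of weighted $L^2$ harmonic forms. At a critical weight $c$ this clean picture fails, but the extended harmonic space $\mathcal{H}_{ext}^*(M,g_{fs},c)$ of Definition \ref{def.extforms} stays finite dimensional and, following Melrose's analysis in \cite[Ch.~6.4]{meaps}, is sandwiched between the weighted $L^2$-cohomologies for weights slightly above and slightly below $c$: it equals the image of the natural map between them, enlarged by a boundary term coming from the indicial roots sitting exactly at weight $c$. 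So it suffices to compute these weighted $L^2$-cohomology groups and to verify that $c=\frac{l-1}{2}-\olp(l+1)$ is precisely the critical weight at which the boundary term reproduces the link truncation built into $\Omega I^*_{\olp}$.

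The second step is a local-to-global computation by Mayer--Vietoris. Write $M=\olM_0\cup U$, where $\olM_0\subset\olM$ is a compact manifold with boundary onto which $M$ deformation retracts away from $\Sigma$, and $U\cong(0,\epsilon)_x\times L\times\Sigma$ carries the model metric $x^{-4}dx^2+g_\Sigma+x^{-2}g_L$. On $\olM_0$ the weighted $L^2$ de Rham complex is quasi-isomorphic to the ordinary one, hence computes $H^*(M)=H^*(X-\Sigma)$. On $U$ the substitution $u=x^{-1}$ turns the metric into the Riemannian product $(du^2+u^2g_L)+g_\Sigma$ of an infinite metric cone on $L$ (cone point at $u=\infty$) with the compact factor $\Sigma$; a Fourier decomposition in the $\Sigma$-directions reduces the computation to the weighted $L^2$-cohomology of the cone end $(R,\infty)_u\times L$ tensored with $H^*(\Sigma)$. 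That cone computation is the classical Cheeger-type ODE analysis: decomposing a form along the Hodge decomposition of $L$, a harmonic $k$-form on $L$ contributes radial solutions that are powers of $u$ with exponents assembled from $\pm(\frac{l-1}{2}-k)$, and the weight $u^{-c}=x^c$ with $c=\frac{l-1}{2}-\olp(l+1)$ is exactly the value at which the dividing line between the link degrees that survive in $L^2$ and those that do not falls at the perversity cut-off degree $K$ — the very $K$ governing the fibrewise truncation in $\Omega I^*_{\olp}(M)$. The degree shift between forms on the cone and forms on the link is what accounts for the $\frac{l-1}{2}$ summand in the weight.

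The third step is the comparison. The Mayer--Vietoris sequence for the weighted $L^2$-cohomology of $\olM_0\cup U$ matches, term by term, the long exact sequence that computes $H^*(\Omega I^*_{\olp}(M))$ from the analogous splitting of the intersection space de Rham complex reviewed in Section \ref{ssec.derhamcplxhi}: the interior terms are $H^*(X-\Sigma)$ on both sides, the collar terms are the truncated link cohomology $H^{<K}(L)\otimes H^*(\Sigma)$ on both sides, and the connecting maps are in each case induced by restriction to a finite collar around the link. A naturality check for the inclusion $\olM_0\hookrightarrow M$ then lets one conclude by the five lemma. Equivalently and more directly, one shows that every form in $\Omega I^*_{\olp}(M)$ lies in $x^{c-\epsilon}L^2_{g_{fs}}\Omega_{g_{fs}}^*(M)$ for every $\epsilon>0$ — this is exactly the decay forced by killing the link components in degrees $\ge K$, and the borderline, merely-extended (rather than honestly $L^2$) behaviour at the critical weight is precisely why the target is $\mathcal{H}_{ext}^*$ — so that $\Omega I^*_{\olp}(M)$ maps into the extended $L^2$ complex; the local computations above identify this map as a quasi-isomorphism, and the Hodge decomposition on the extended complex then provides the harmonic representatives.

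The main obstacle is the collar analysis of the second step: establishing the Fredholm and Hodge theory for the fibred scattering Laplacian, locating its indicial roots, and computing the weighted (and, at the critical weight, extended) $L^2$-cohomology of the cone-type end so that the critical weight comes out exactly as $\frac{l-1}{2}-\olp(l+1)$. The genuine content is in pinning down this numerology — the $L^2$-normalization shift $\frac{l-1}{2}$ against the perversity value $\olp(l+1)$, together with the degree shift between cone forms and link forms — and in recognizing that at this critical weight it is the \emph{extended} harmonic forms, not the honestly $L^2$ ones, that match $\Omega I^*_{\olp}$. Everything else is standard Hodge-theoretic formalism or a comparison of two exact sequences.
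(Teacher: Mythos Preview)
Your approach differs substantially from the paper's. The paper does not analyse the fibred scattering Laplacian directly. Instead it routes the argument through the \emph{conifold transition} $CT(X)$: Theorem~\ref{ihhi} establishes $HI^j_{\dR,\olp}(X)\cong IG^j_{(q)}(CT(X))$ for the appropriate $q$, and then the paper invokes the second author's prior result (Theorem~\ref{ihthm}, from \cite{Hu3}) identifying $IG^j_{(q)}(\hat M)$ with extended harmonic forms for a fibred \emph{cusp} metric. The passage from fibred cusp to fibred scattering is purely a conformal change: a product-type fibred scattering metric on $X-\Sigma$ for the fibration $Y\to\Sigma$ is literally a fibred boundary metric for the dual fibration $Y\to L$, hence conformal to a fibred cusp metric whose compactification is exactly $CT(X)$; the conformal factor shifts the weight by $\tfrac{n}{2}-j$ and the bookkeeping (Corollaries~\ref{fbmetrics} and~\ref{cfhodge}) produces $\tfrac{l-1}{2}-\olp(l+1)$. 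All the Fredholm and indicial analysis is imported wholesale from \cite{Hu3}; the new content is the purely topological Theorem~\ref{ihhi}. The conifold transition is thus not incidental but the central idea, and your proposal misses it entirely.

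Your direct route is in principle viable, but as written it contains an error that would make the comparison fail. You state that the collar term in the $HI$ Mayer--Vietoris sequence is $H^{<K}(L)\otimes H^*(\Sigma)$ and that $\Omega I^*_{\olp}$ ``kills the link components in degrees $\ge K$''. The truncation runs the other way: by definition (Section~\ref{ssec.derhamcplxhi}) the complex $\Omega I^*_{\olp}(M)$ uses the fibrewise \emph{co}-truncation $\ft_{\ge k}$ with $k=l-\olp(l+1)$, so the collar cohomology is $\bigoplus_{i\ge k} H^i(L)\otimes H^{j-i}(\Sigma)$ (Lemma~\ref{HIMV}). Your own $L^2$ estimate on the scattering end, carried out correctly, confirms this: a form pulled back from $L$ in degree $i$ lies in $x^{c-\epsilon}L^2_{g_{fs}}$ precisely when $i\ge k$, with the borderline case $i=k$ responsible for the need to pass to \emph{extended} rather than honest $L^2$ harmonic forms. (Incidentally, after $u=x^{-1}$ the cone point of $du^2+u^2 g_L$ sits at $u=0$, not $u=\infty$; the end you are analysing is the large-radius Euclidean-type end.) With the truncation direction corrected your outline could be made to work, but it would amount to reproving the main analytic theorem of \cite{Hu3} in the scattering setting, which is rather more than the ``standard Hodge-theoretic formalism'' you suggest; the paper's conifold-transition device sidesteps all of that.
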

As a corollary, the spaces
$\mathcal{H}_{ext}^* \left(M, g_{fs}, \frac{l-1}{2}-\olp(l+1)\right)$ satisfy
Poincar\'e duality across complementary perversities.
Using an appropriate Hodge star operator, this has been shown directly by
the second author in \cite{Hu3}.
It is worth noting that on the space of 
extended harmonic forms, integration does not give a 
well-defined intersection pairing on the right hand side.  Thus it would be interesting in the future to consider 
how to realise the intersection pairing on extended harmonic forms.

The strategy of the proof of Theorem \ref{hodge} is as follows: First we relate $HI^*_{\dR,\olp} (X)$  and
$\redh_* (\ip X;\real)$ to intersection cohomology and intersection homology, respectively.
To do this, we introduce in Section \ref{sec.notation} the device of a 
\emph{conifold transition} $CT(X)$ associated to an $X$ as in the Hodge theorem.
The conifold transition arose originally in theoretical Physics and algebraic geometry as
a means of connecting different Calabi-Yau $3$-folds to each other by a process of
deformations and small resolutions, see Chapter $3$ of \cite{Ba-HI} for more information.
Topologically, such a process also arises in manifold surgery theory when $\Sigma$ is an 
embedded sphere with trivial normal bundle. The relation of $HI$ to $IH$ is then given by the
following theorem.
\begin{thm}\label{ihhihom} (Homological version.)
Let $X$ be an $n$-dimensional stratified pseudomanifold with smooth 
nonempty singular stratum $\Sigma \subset X$.  
Assume that $\Sigma$ is closed as a manifold and
the link bundle $Y \to \Sigma$ is a product bundle $L \times \Sigma \to \Sigma$, where the link $L$ is a smooth closed manifold
of dimension $l$.  Then the reduced homology $\redh I^\olp_* (X)$ 
of the intersection space of perversity $\olp$ 
is related to the intersection homology of the conifold transition $CT(X)$ of $X$ by:
\[
\redh I^\olp_j(X) \cong IG^{(n-1-\olp(l+1)-j)}_j(CT(X)), 
\]
where for a pseudomanifold $W$ with one singular stratum of codimension $c$,
\[
IG_j^{(k)}(W) = \frac{IH_j^\olq(W) \oplus IH_j^{\olq'}(W)}{ {\rm Im}(IH_j^\olq(W) \to IH_j^{\olq'}(W))},
\]
with $\olq(c) = k-1$ and $\olq'(c) = k$.
\end{thm}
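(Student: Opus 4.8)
The strategy is to express both sides in terms of the blow-up $\olM$ (a compact manifold with $\partial\olM=L\times\Sigma$) together with the homology of $L$ and of $\Sigma$, and then to match the two answers. All coefficients are in $\real$. Put $l=\dim L$, $s=\dim\Sigma$, so that $n=l+s+1$, the codimension of $\Sigma$ in $X$ is $l+1$, and the codimension of the singular stratum of $CT(X)$ is $c=s+1$; abbreviate $k:=l-\olp(l+1)$ $(\geq 1)$, the degree below which the link is retained in forming $\ip X$. Let $\iota\colon L\times\Sigma=\partial\olM\hookrightarrow\olM$ be the boundary inclusion and, for $m\geq 0$, let $G^{(m)}\subseteq H_\ast(L\times\Sigma)$ be the sum of those K\"unneth summands $H_a(L)\otimes H_{\ast-a}(\Sigma)$ with $a<m$; write $G:=G^{(k)}$. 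Recall from Section \ref{sec.notation} that $CT(X)$ is an $n$-dimensional pseudomanifold with one singular stratum of codimension $c$, having a neighbourhood of the fibrewise-cone form $L\times c^\circ\Sigma$ and complement (= blow-up) again $\olM$.

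\emph{The intersection-space side.} By the construction recalled in Section \ref{sec.intsp} (Chapter 2.9 of \cite{Ba-HI}; see also \cite{Ess}), $\ip X$ is, up to homotopy equivalence, the mapping cone of the composite $f\colon\mathbf{L}_{<k}\to L\times\Sigma\hookrightarrow\olM$, where $\mathbf{L}_{<k}$ is the fibrewise Moore approximation of the link bundle of degree $k$; for a product bundle one may take $\mathbf{L}_{<k}=(t_{<k}L)\times\Sigma$ for $t_{<k}L$ a spatial homology truncation of $L$. The long exact sequence of $(\ip X,\olM)$ together with $H_\ast(\ip X,\olM)\cong\redh_{\ast-1}(\mathbf{L}_{<k})$ yields, for each $j$, a short exact sequence
\[ 0\to\cok\!\big(f_\ast\colon\redh_j(\mathbf{L}_{<k})\to\redh_j(\olM)\big)\to\redh I^{\olp}_j(X)\to\ker\!\big(f_\ast\colon\redh_{j-1}(\mathbf{L}_{<k})\to\redh_{j-1}(\olM)\big)\to 0. \]
By the product hypothesis and K\"unneth, $\redh_\ast(\mathbf{L}_{<k})\hookrightarrow H_\ast(L\times\Sigma)$ has image $G$ in positive degrees, and $f_\ast$ is the restriction of $\iota_\ast$ to $G$. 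Thus $\redh I^{\olp}_j(X)$ is an extension of $\ker(\iota_\ast|_{G_{j-1}})$ by $H_j(\olM)/\iota_\ast(G_j)$, where a subscript denotes the degree.

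\emph{The conifold-transition side.} Fix $j$ and let $\olq,\olq'$ be the perversities on $CT(X)$ with $\olq(c)=n-2-\olp(l+1)-j$ and $\olq'(c)=\olq(c)+1=n-1-\olp(l+1)-j$, which are exactly the ones entering $IG^{(n-1-\olp(l+1)-j)}_j$. I would apply Mayer--Vietoris to $CT(X)=U\cup V$ with $U$ a neighbourhood of the blow-up part (a manifold, $\simeq\olM$), $V\simeq L\times c^\circ\Sigma$ a neighbourhood of the singular stratum, and $U\cap V\simeq L\times\Sigma$; then $IH_\ast^{\olq}$ is ordinary homology on $U$ and $U\cap V$, while the (general-perversity) cone formula and K\"unneth give $IH_\ast^{\olq}(V)=\bigoplus_{b<\,s-\olq(c)}H_{\ast-b}(L)\otimes H_b(\Sigma)$, with $H_\ast(L\times\Sigma)\to IH_\ast^{\olq}(V)$ the projection onto these summands. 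Since $n=l+s+1$ one has $s-\olq(c)=j-k+1$, so the kernel of this projection is $G_j$ in degree $j$ and $G^{(k-1)}_{j-1}$ in degree $j-1$, while for $\olq'$ (cut-off lowered by one) it is $G^{(k+1)}_j$ in degree $j$ and $G_{j-1}$ in degree $j-1$. Feeding these into the two Mayer--Vietoris sequences shows that $IH_j^{\olq}(CT(X))$ is an extension of $\ker(\iota_\ast|_{G^{(k-1)}_{j-1}})$ by $H_j(\olM)/\iota_\ast(G_j)$, and $IH_j^{\olq'}(CT(X))$ an extension of $\ker(\iota_\ast|_{G_{j-1}})$ by $H_j(\olM)/\iota_\ast(G^{(k+1)}_j)$; naturality of Mayer--Vietoris in the perversity makes $IH_j^{\olq}(CT(X))\to IH_j^{\olq'}(CT(X))$ the evident surjection on first terms and inclusion on third terms, whence (snake lemma) its cokernel is $\ker(\iota_\ast|_{G_{j-1}})/\ker(\iota_\ast|_{G^{(k-1)}_{j-1}})$. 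Since by definition $IG^{(m)}_j(W)\cong IH_j^{\olq}(W)\oplus\cok\!\big(IH_j^{\olq}(W)\to IH_j^{\olq'}(W)\big)$ and short exact sequences of $\real$-vector spaces split, the two partial kernels recombine into $\ker(\iota_\ast|_{G_{j-1}})$, so $IG^{(n-1-\olp(l+1)-j)}_j(CT(X))$ is an extension of $\ker(\iota_\ast|_{G_{j-1}})$ by $H_j(\olM)/\iota_\ast(G_j)$ — exactly the data that computed $\redh I^{\olp}_j(X)$.

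\emph{From coincidence to isomorphism.} Over $\real$ the extension groups above are determined by their dimensions, so an abstract isomorphism already follows; to obtain one natural enough for the later applications (Theorem \ref{hodge} and its de Rham counterpart) I would instead produce the isomorphism geometrically, using that the collar $L\times\Sigma\times(0,1]$, the blow-up $\olM$, and the Moore approximation $\mathbf{L}_{<k}$ occur compatibly inside $X$, $\ip X$ and $CT(X)$, and that the chains on the fibrewise cone that are simultaneously $\olq$- and $\olq'$-allowable in the relevant degrees form precisely the image of $\mathbf{L}_{<k}$. The main obstacle is this matching step: one must check that the Mayer--Vietoris connecting homomorphism of $CT(X)$ and the connecting homomorphism of the pair $(\ip X,\olM)$ agree under the K\"unneth identifications — i.e. that the two extension \emph{classes}, not just the extension groups, coincide. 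Two further points require care: as $j$ varies $\olq(c)$ leaves the Goresky--MacPherson range, so one needs the cone formula for general (loose) perversities, for which it keeps the shape used above; and for a genuine product link bundle $\mathbf{L}_{<k}$ is defined only up to homotopy, so one must work with the specific model of Section \ref{sec.intsp} (equivalently \cite{Ess}) and check that the maps above are the intended ones.
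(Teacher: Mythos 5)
Your proposal is correct in substance and follows the same overall strategy as the paper: decompose $CT(X)$ by Mayer--Vietoris and $\ip X$ by the long exact sequence of $(\ip X,\olM)$, identify the local terms near the singular set via K\"unneth and the general-perversity cone formula, and compute $\cok\big(IH_j^{\olq}\to IH_j^{\olq'}\big)$ by the snake lemma; your identifications of the kernels of the local projections with the truncated K\"unneth pieces $G^{(m)}$ agree exactly with the paper's. Where you differ is the final assembly: the paper constructs an explicit (nonunique) monomorphism $\rho\colon IH_j^{\olq}(CT(X))\to HI_j^{\olp}(X)$ by filling in a ladder between the two Mayer--Vietoris sequences and invoking the four-lemma, and then shows $\cok\rho\cong\cok\gamma$; you instead exhibit both sides as extensions of $\ker(\iota_*|_{G_{j-1}})$ by $H_j(\olM)/\iota_*(G_j)$ and conclude by dimension count over $\real$. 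Your worry about matching extension classes is unnecessary: the theorem asserts only an abstract isomorphism, the paper's own isomorphism is equally non-canonical, and Theorem \ref{hodge} is derived from the independently proved de Rham version, not from this one. Two caveats. First, your parenthetical ``$k\geq 1$'' is unwarranted: $\olp$ is an extended perversity, so $k=l-\olp(l+1)$ may be $\leq 0$ (then $L_{<k}=\varnothing$ and $\ip X=\olM^{+}$) or $>l$; your formalism survives if $G^{(m)}$ is read as $0$ for $m\leq 0$, but this must be said. Second, the low degrees $j=0,1$ --- where $\redh_{-1}$, reduced versus unreduced $H_0$, and the cone-point summand of $R_0$ interfere with the clean identification of $\ker\big(f_*|_{\redh_0(\mathbf{L}_{<k}\times\Sigma)}\big)$ with $\ker(\iota_*|_{G_0})$ --- occupy roughly half of the paper's proof (three subcases for $j=1$ and a separate augmentation/snake-lemma argument for $j=0$); in your framework these reduce to the elementary fact that $\ker\big(\iota_*\colon H_0(\partial\olM)\to H_0(\olM)\big)$ is contained in the kernel of the augmentation, but they need to be checked rather than left implicit.
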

Note that when both $\olp (l+1)$ and the degree $j$ are large, one must allow negative values for 
$k$ in the quotient $IG^{(k)}_j$. Therefore, the perversity functions $\olq$ considered in this paper
are not required to satisfy the Goresky-MacPherson conditions, but are simply arbitrary integer valued functions.
This, in turn, necessitates a minor modification in the definition of intersection homology.
The precise definition of $IH_*^{\olq}$ used in the above theorem is provided in Section \ref{sec.ih}
and has been introduced independently by Saralegi \cite{saralegi} and by Friedman \cite{friedmanbook}.
In the following de Rham version of the above result, the link and the singular stratum are assumed to be
orientable, as our methods rely on the availability of the Hodge star operator.
\begin{thm}\label{ihhi} (De Rham Cohomological version)
Let $X$ be a stratified pseudomanifold with smooth singular stratum $\Sigma \subset X$.  Assume that
$\Sigma$ is closed and orientable, and
the link bundle $Y \to \Sigma$ is a product bundle $L \times \Sigma \to \Sigma$, where the link $L$ is a smooth closed 
orientable manifold
of dimension $l$.  Then the de Rham cohomology $HI_{\dR, \olp}^*(X)$ can be described in terms of intersection cohomology by:
\[
HI^j_{\dR, \olp} (X) \cong IG_{(j+1-k)}^j(CT(X)), 
\]
where $k = l-\olp(l+1)$ and for a pseudomanifold $W$ with one singular stratum,
\[
IG_{(q)}^j(W) = \frac{IH_{(q-1)}^j(W) \oplus IH_{(q)}^j(W)}{ {\rm Im}(IH_{(q-1)}^j(W) \to IH_{(q)}^j(W))},
\]
with the notation $IH_{(q)}^*(W)$ as given in Equation (\ref{ihpoincare}).
\end{thm}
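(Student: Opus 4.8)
The plan is to derive this from the homological Theorem~\ref{ihhihom} by dualising, the orientability hypotheses entering precisely when the resulting object is rewritten in the Poincar\'e-dual indexing of Equation~\eqref{ihpoincare}.

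First I would invoke Essig's de Rham theorem \cite{Ess}: for the two-strata spaces with product link bundle considered here, there is a de Rham isomorphism $HI^j_{\dR,\olp}(X)\cong\redh^j(\ip X;\real)$. Since all the groups involved are finite-dimensional real vector spaces, the universal coefficient theorem gives $HI^j_{\dR,\olp}(X)\cong\Hom_\real(\redh I^\olp_j(X),\real)$, so it suffices to dualise the right-hand side of Theorem~\ref{ihhihom}, namely $IG^{(a)}_j(CT(X))$ with $a=n-1-\olp(l+1)-j$.

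Next I would unwind this quotient. Writing $\alpha\colon IH_j^{\olq}(CT(X))\to IH_j^{\olq'}(CT(X))$ for the canonical perversity-coarsening map (with $\olq(l')=a-1$ and $\olq'(l')=a$, where $l'=n-l$ is the codimension of $L$ in $CT(X)$), one has $IG^{(a)}_j(CT(X))\cong IH_j^{\olq}(CT(X))\oplus\cok\alpha$, so its dual is $\Hom_\real(IH_j^{\olq}(CT(X)),\real)\oplus\ker(\alpha^{\ast})$ using $(\cok\alpha)^{\ast}\cong\ker(\alpha^{\ast})$. The crucial step is to recognise the two summands as intersection \emph{cohomology} of $CT(X)$ in the indexing of \eqref{ihpoincare}. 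Here one uses that $L$ and $\Sigma$, hence the regular part of $CT(X)$, are oriented, so that the Hodge star furnishes the Poincar\'e duality isomorphism $IH^j_{(q)}(CT(X))\cong IH_{n-j}^{\overline{r}}(CT(X))$ with $\overline{r}$ complementary to $q$ with respect to $l'$. A bookkeeping computation, resting on the identities $(a-1)+(j+1-k)=a+(j-k)=l'-1$ with $k=l-\olp(l+1)$, then turns $\Hom_\real(IH_j^{\olq}(CT(X)),\real)$ and $\Hom_\real(IH_j^{\olq'}(CT(X)),\real)$ into $IH^j_{(j+1-k)}(CT(X))$ and $IH^j_{(j-k)}(CT(X))$, with $\alpha^{\ast}$ becoming the canonical cohomology coarsening map between them. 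Feeding this back, the dual of $IG^{(a)}_j(CT(X))$ becomes $IH^j_{(j+1-k)}(CT(X))\oplus\ker(IH^j_{(j-k)}(CT(X))\to IH^j_{(j+1-k)}(CT(X)))$; this has the same dimension as $IG^j_{(j+1-k)}(CT(X))=IH^j_{(j-k)}(CT(X))\oplus\cok(IH^j_{(j-k)}(CT(X))\to IH^j_{(j+1-k)}(CT(X)))$, and the two get identified once the duality pairing of \eqref{ihpoincare} is known to be compatible with the coarsening maps.

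I expect the main obstacle to be the combined bookkeeping of the last two steps: pinning down the precise perversity correspondence in \eqref{ihpoincare} for the generalised perversities occurring on $CT(X)$, which (as already noted) need not satisfy the Goresky-MacPherson conditions, forcing one to work inside the Saralegi--Friedman framework of Section~\ref{sec.ih} throughout and to track signs and orientations in the Hodge-star pairing; and then verifying that this pairing intertwines the homological coarsening $\alpha$ with the cohomological one $\alpha^{\ast}$, so that the two incarnations of $IG$ are identified \emph{naturally} rather than merely by a dimension count. A more self-contained alternative would bypass Theorem~\ref{ihhihom} altogether: one would express both $HI^*_{\dR,\olp}(X)$ and the groups $IH^*_{(q)}(CT(X))$ by Mayer--Vietoris long exact sequences for the decompositions along $\partial\olM=L\times\Sigma$ (with $X$ obtained by coning the $L$-factor and $CT(X)$ by coning the $\Sigma$-factor), match the fibrewise cotruncated link contributions by Poincar\'e--Lefschetz duality on the oriented manifold-with-boundary $\olM$, and observe that the shift between the two perversities $q-1$ and $q$ is precisely the connecting homomorphism $H^{j-1}(\partial\olM)\to H^j(\olM,\partial\olM)$ of Lefschetz duality; the difficulty there is the same reconciliation of the fibrewise (co)truncations with the Hodge star.
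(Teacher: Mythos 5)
Your proposal is correct in its essential logic and index bookkeeping, but it takes a genuinely different route from the paper — indeed, it is precisely the route the authors announce in the introduction that they do \emph{not} take ("We do not deduce the cohomological version from the homological one by universal coefficient theorems, but prefer to give independent proofs for each version"). The paper's proof never touches Theorem \ref{ihhihom}: it compares differential forms directly, constructing maps $A\colon IH^j_{(q)}(CT(X))\to HI^j_{(j-q)}(X)$ and $B\colon HI^j_{(j-q)}(X)\to IH^j_{(q+1)}(CT(X))$, both induced by inclusion of closed representatives between the complexes $I\Omega^*_{(q)}(CT(X))$, $\widehat{\Omega I}^*_{(j-q)}(X)$ and $I\Omega^*_{(q+1)}(CT(X))$ (after correcting representatives via the fiberwise Hodge decomposition on $L$), and then proves that $A$ is injective, $B$ is surjective, $\ker B\subset \im A$, and $B\circ A=S_{q,q+1}$; the isomorphism with $IG^j_{(j+1-k)}$ follows. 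Your dualization argument does yield the abstract isomorphism asserted in the theorem — over $\real$ the dimension count identifying $IH^j_{(j+1-k)}\oplus\ker S$ with $IH^j_{(j-k)}\oplus\cok S$ is legitimate — but it needs two ingredients you only flag rather than supply: (i) the duality $\Hom(IH^{\olq}_j(CT(X)),\real)\cong IH^j_{(c-1-\olq(c))}(CT(X))$ for the \emph{extended} perversities occurring here, which the paper cites from \cite{BHS} only for standard perversities (the extension does follow from the matching cone formulas and Mayer--Vietoris, but must be written down), and (ii) the compatibility of this duality with the coarsening maps, so that $\gamma^*$ really is $S_{q,q+1}$. What the paper's form-level construction buys, and your route does not, is an isomorphism realized by explicit inclusions of closed forms: the maps $A$ and $B$ are reused verbatim in Lemma \ref{comp} and the proof of Theorem \ref{intersection} to compare intersection pairings and signatures, for which an isomorphism obtained by dimension count would not suffice. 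Your sketched "self-contained alternative" at the end is in fact much closer in spirit to what the paper actually does.
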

We do \emph{not} deduce the cohomological version from the homological one by universal coefficient theorems,
but prefer to give independent proofs for each version. The proof of the homological version
uses Mayer-Vietoris techniques while the proof of the cohomological version compares differential forms
in the various de Rham complexes on $M$. (The regular part of the conifold transition coincides with
the regular part $M$ of $X$.)
Finally, we appeal to a result (Theorem \ref{ihthm} in the present paper) of the second author 
(\cite{Hu3}), which relates extended weighted
$L^2$ harmonic forms with respect to a fibred cusp metric $g_{fc}$
to the $IG^j_{(q)}$ arising in Theorem \ref{ihhi} above. This leads in a natural way to
fibred scattering metrics because fibered cusp metrics 
\[  g_{fc} = \frac{dx^2}{x^2} + g_L + x^2 g_\Sigma \]
on the conifold transition are conformal to
\[ \frac{1}{x^2} g_{fc} = \frac{dx^2}{x^4} + \frac{1}{x^2} g_L + g_\Sigma, \] 
which is precisely a fibred scattering metric on $X$.\\

For $n$ divisible by $4$, and either $l$ odd or $H^{l/2}(L)=0$ (i.e. $X$ a Witt space),
the nondegenerate intersection pairing 
$\cap_{HI}:HI_{\dR,\olm}^{n/2} (X)\otimes HI_{\dR,\olm}^{n/2} (X)\to \real$
on the middle dimension $n/2$ for the middle perversity
has a signature $\sigma_{HI} (X)$, which in the setting of isolated singularities is equal
to the Goresky-MacPherson signature coming from intersection (co)homology of $X$,
\cite[Theorem 2.28]{Ba-HI}.
  We use Theorem \ref{ihhi} to obtain results about the intersection pairing and signature on $HI^*_{\dR}(X)$.
This turns out to be related to perverse signatures, which are signatures defined for arbitrary perversities on arbitrary
pseudomanifolds from the extended intersection pairing on intersection cohomology.  Perverse signatures are 
defined in the two stratum case in \cite{Hu2} and more generally in \cite{FH}.
\begin{thm}\label{intersection}  
Let $X$ be an $n$-dimensional compact oriented stratified pseudomanifold 
with smooth singular stratum $\Sigma \subset X$.  Assume that
the link bundle $Y \to \Sigma$ is a product bundle $L \times \Sigma \to \Sigma$.
Then the intersection pairing $\cap_{HI}: HI^j_{\dR, \olp} \otimes HI^{n-j}_{\dR, \olq(X)}\to \mathbb{R}$ for dual perversities
$\olp$ and $\olq$ is compatible 
with the intersection pairing on the intersection cohomology spaces $IG^*$ appearing
in Theorem \ref{ihhi}.
When $X$ is an even dimensional Witt space, then the signature $\sigma_{HI}(X)$ of the intersection form on 
$HI^{n/2}_{\dR,\olm} (X)$ is equal both to the signature $\sigma_{IH}(X)$ of the
Goresky-MacPherson intersection form on $IH^{n/2}_{\olm}(X)$, and to 
the perverse signature $\sigma_{IH,\olm}(CT(X)),$ 
that is, the signature of the intersection form on 
\[
{\rm Image}\left(IH^{n/2}_{\olm}(CT(X)) \to IH^{n/2}_\oln(CT(X))\right),
\]
where $\olm$ is the lower middle and $\oln$ the upper middle perversity.
Further, 
\[
\sigma_{HI}(X) =\sigma_{IH} (X) = \sigma_{IH,\olm}(CT(X)) 
=\sigma_{IH}(Z)=\sigma_{HI}(Z) 
=\sigma (\overline{M}),
\] 
where $Z$ is the
one-point compactification of $X-\Sigma$ and $\sigma (\olM)$ is the Novikov signature
of the complement $\overline{M}$ of an open tubular neighborhood of the singular set.
\end{thm}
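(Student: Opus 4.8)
The plan is to establish the chain of equalities in two packages: first the signature identities that come ``for free'' from the preceding structural theorems, and then the geometric identification $\sigma_{IH,\olm}(CT(X)) = \sigma(\overline M)$, which is the real content. For the first package, I would argue as follows. By Theorem \ref{ihhi} with $\olp = \olm$ and $n$ even, $HI^{n/2}_{\dR,\olm}(X)$ is identified with $IG^{n/2}_{(q)}(CT(X))$ for the appropriate $q$; under the Witt hypothesis (either $l$ odd, or $H^{l/2}(L)=0$) the lower- and upper-middle perversities agree in the relevant range on the link, so the two intersection cohomology groups $IH^{n/2}_{(q-1)}$ and $IH^{n/2}_{(q)}$ of $CT(X)$ coincide and $IG^{n/2}$ collapses to the image $\operatorname{Image}(IH^{n/2}_{\olm}(CT(X)) \to IH^{n/2}_{\oln}(CT(X)))$. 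The compatibility of $\cap_{HI}$ with the intersection pairing on $IG^*$ (the first assertion of the theorem, which follows by tracing the explicit form-level isomorphism of Theorem \ref{ihhi} through the definition of $\cap_{HI}$ as wedge-then-integrate) then gives $\sigma_{HI}(X) = \sigma_{IH,\olm}(CT(X))$. For $\sigma_{HI}(X) = \sigma_{IH}(X)$ in the isolated-singularity case this is \cite[Theorem 2.28]{Ba-HI}; in general, since $X$ is Witt the Goresky--MacPherson form on $IH^{n/2}_{\olm}(X)$ is itself the nondegenerate middle-perversity form, and one uses the de Rham comparison to match it with the $IG$ form. The identities $\sigma_{IH}(Z) = \sigma_{HI}(Z)$ and $\sigma_{IH}(X) = \sigma_{IH}(Z)$ follow because $Z$, the one-point compactification of $M$, is again a Witt pseudomanifold with isolated singularity whose link is $L$; its intersection homology in the middle degree agrees with that of $X$ (both computed from $M$ with the same link-truncation condition), and again \cite[Theorem 2.28]{Ba-HI} applies to $Z$.

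The substantive step is $\sigma_{IH,\olm}(CT(X)) = \sigma(\overline M)$, the Novikov signature of the manifold-with-boundary $\overline M$ (whose boundary is $L \times \Sigma$). Here the plan is to use the conifold transition structure directly: $CT(X)$ is obtained from $\overline M$ by coning off the boundary along one of the two Calabi--Yau-type fillings, so that $CT(X) = \overline M \cup_{L\times\Sigma} (\text{mapping cylinder of } L\times\Sigma \to \Sigma)$, i.e.\ $\overline M$ with a ``small resolution''-type cap glued in. I would analyze the map $IH^{n/2}_{\olm}(CT(X)) \to IH^{n/2}_{\oln}(CT(X))$ via the Mayer--Vietoris / attaching long exact sequences for this decomposition, comparing both intersection cohomology groups to $H^{n/2}(\overline M, \partial \overline M)$ and $H^{n/2}(\overline M)$. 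The point is that under the Witt condition the allowed chains through the singular stratum are so constrained that the image of the middle-perversity-to-upper-middle map is exactly the image of $H^{n/2}(\overline M,\partial\overline M) \to H^{n/2}(\overline M)$, and the intersection form on $IH^{n/2}$ restricts on this image to the cup-product form on $\operatorname{Image}(H^{n/2}(\overline M,\partial\overline M)\to H^{n/2}(\overline M))$ — which by definition is the Novikov signature $\sigma(\overline M)$. (This is precisely the classical fact that the Novikov signature of a manifold with boundary is the signature of the cup form on the image of relative in absolute cohomology; cf.\ the Novikov additivity circle of ideas.)

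The main obstacle I expect is bookkeeping the perversity conditions on $CT(X)$ carefully enough that the middle-perversity intersection (co)homology of $CT(X)$ really does ``see only'' $\overline M$ rel boundary in the middle degree, and that the extra cone piece contributes nothing to the image. Concretely this amounts to: (i) checking that the link of the singular stratum in $CT(X)$, together with the truncation degrees forced by $\olm$ and $\oln$, kills exactly the ``small resolution'' directions; and (ii) matching the two intersection pairings on the nose — including signs and the placement of the fundamental class — so that a signature equality, not merely an isomorphism of forms up to sign, follows. A secondary subtlety is the non-standard (Saralegi--Friedman) definition of $IH$ used here when $\olq$ violates the Goresky--MacPherson bounds; I would need to confirm that in the Witt middle-perversity situation these coincide with the classical groups, so that $\sigma_{IH}(X)$ and the perverse signature are computed by the usual Poincaré--Lefschetz machinery. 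Once (i) and (ii) are in place, the chain of equalities closes up, and the cohomological comparison theorems \ref{ihhi} and \ref{intersection}'s first assertion transport the result back to $\sigma_{HI}$.
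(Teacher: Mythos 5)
There is a genuine gap at the very first step of your ``first package.'' You assert that the Witt hypothesis on $X$ (i.e.\ $l$ odd or $H^{l/2}(L)=0$) forces the two groups $IH^{n/2}_{(q-1)}(CT(X))$ and $IH^{n/2}_{(q)}(CT(X))$ to coincide, so that $IG^{n/2}(CT(X))$ collapses to the image of the canonical map. This confuses the Witt condition on $X$ with a Witt-type condition on $CT(X)$: the singular stratum of $CT(X)$ is $L$ with link $\Sigma$, so the relevant local condition would involve $H^{s/2}(\Sigma)$, $s=\dim\Sigma$, which is not implied by the Witt condition on $X$. (The paper's remark following the theorem statement makes exactly this point --- $CT(X)$ need not be Witt, which is why the \emph{perverse} signature must be used for $CT(X)$ in the first place.) The paper's actual route is different: it first shows the adjunction $A_{\olp}[\alpha]\cap_{HI}[\beta]=[\alpha]\cap_{IH}B_{\olq}[\beta]$ at the level of representative forms, deduces that $\im(A_{\olm})$ is precisely the annihilator of $\ker(B_{\olm})$ under the nondegenerate pairing $\cap_{HI}$, and concludes that the signature of $HI^{n/2}_{\dR,\olm}(X)$ equals that of the induced form on $\im(A_{\olm})/\ker(B_{\olm})\cong \im(B_{\olm}\circ A_{\olm})\subset IH^{n/2}_{\oln}(CT(X))$, i.e.\ the perverse signature. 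No collapse of $IG$ is needed, and none is available.

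Two further steps are under-justified. For $\sigma_{IH,\olm}(CT(X))=\sigma(\olM)$ your claim that the image of $IH^{n/2}_{\olm}(CT(X))\to IH^{n/2}_{\oln}(CT(X))$ is \emph{exactly} the image of $H^{n/2}(\olM,\partial\olM)\to H^{n/2}(\olM)$ is too strong; the correct general statement, which the paper quotes from \cite{Hu2}, is that the perverse signature equals the Novikov signature of $\olM$ \emph{plus} correction terms arising from the second and higher pages of the Leray spectral sequence of the link bundle, and one must invoke the degeneration of that spectral sequence for a product bundle to kill those terms. More seriously, the equality $\sigma_{IH}(X)=\sigma_{IH}(Z)$ does not follow from any agreement of middle-degree intersection homology: $X$ and $Z$ have different links ($L$ versus $L\times\Sigma$) and different codimensions of singular strata, and the groups are generally different. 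The paper proves this equality via Siegel's pinch bordism \cite{siegel}, $\sigma_{IH}(X)=\sigma_{IH}(Z)+\sigma_{IH}(E)$ with $E=(cL)\times\Sigma\cup_{L\times\Sigma}c(L\times\Sigma)$, and kills $\sigma_{IH}(E)$ by a case analysis on the parity of $l$ (Lemma 8.1 of \cite{bcs} when $l$ is odd; the involution $CT(CT(X))\cong X$ together with the already-established equalities applied to $CT(X)$ when $l$ is even). Without an argument of this kind, the chain of equalities does not close.
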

\begin{remark}
The compactification $Z$ appearing in Theorem \ref{intersection} has one isolated singular point. Since
$X$ is even-dimensional, $Z$ is thus a Witt space and has a well-defined $IH$-signature and a well-defined
$HI$-signature. However, if $X$ satisfies the Witt condition, then $CT(X)$ need not satisfy the Witt condition
and $\sigma_{IH} (CT(X))$ and $\sigma_{HI} (CT(X))$ are a priori not defined.
Therefore, we must use the \emph{perverse} signature $\sigma_{IH,\olm}$ for $CT(X)$ as defined in \cite{Hu2}, \cite{FH}.
\end{remark}
We prove Theorem \ref{intersection} using de Rham theory, Siegel's work
\cite{siegel}, Novikov additivity and results of \cite{bcs}. 
Using different, algebraic methods and building
on results of \cite{Ba-HI}, parts of this theorem were also obtained by Matthias Spiegel in his
dissertation \cite{spiegel}.  \\

\textbf{General Notation.}
Throughout the paper, the following notation will be used.
If $f$ is a continuous map, then $\cone (f)$ denotes its mapping cone.
For a compact topological space $X$, $cX$ denotes the closed cone and
$c^\circ X$ the open cone on $X$.
Only homology and cohomology with real coefficients are used in this paper.
Thus we will write $H_* (X) = H_* (X;\real)$. When $M$ is a smooth manifold,
$H^* (M)$ is generally, unless indicated otherwise, understood to mean de Rham
cohomology. The symbol $\redh_* (X)$ denotes the reduced (singular) homology of $X$;
$\redh^* (X)$ is the reduced cohomology.

\section{The Conifold Transition and Riemannian Metrics}
\label{sec.notation}

Let $X$ be a Thom-Mather stratified pseudomanifold with a single smooth singular stratum, $\Sigma$.  
Let $M = X -\Sigma$.
Assume that the link bundle of $\Sigma$ is a product, $L \times \Sigma$.  
Let $N \subset X$
be an open tubular neighborhood of $X$.  Fix a diffeomorphism
\[
\theta:N -\Sigma \cong L\times \Sigma \times (0,1)
\]
that extends to a homeomorphism 
\[
\tilde{\theta}: N \cong \frac{L \times \Sigma \times [0,1)}{(z,y,0) \sim (z',y,0)} \cong c^\circ (L)\times \Sigma.
\]
Define the blowup 
\[
\overline{M} = 
(X - \Sigma) \cup_\theta (L \times \Sigma \times [0,1) ),
\]
with blowdown map $\beta:\overline{M} \to X$ given away from the boundary by the identity
and near the boundary by the quotient map from $L \times \Sigma \times [0,1)$ to
$(L \times \Sigma \times [0,1))/((z,y,0) \sim (z',y,0))$. The blowup is a smooth manifold with boundary 
$Y=\partial\overline{M} = L \times \Sigma$. Let $\inc:\partial \olM \to \olM$ be the inclusion of the boundary, 
and denote the projections onto the two components by $\pi_L:Y\to L$ and $\pi_\Sigma:Y \to \Sigma$.  
By an abuse of notation, we will also use $\pi_L$ and $\pi_\Sigma$ to denote the projections from 
$N -\Sigma \cong L \times  \Sigma \times (0,1)$ to $L$ and $\Sigma$, respectively.  Let 
$\pi_Y$ denote the projection from $N - \Sigma$ to $Y = L \times \Sigma$.

The \emph{conifold transition} of $X$, denoted $CT(X),$ is defined as 
\[
CT(X) = (X - \Sigma) \cup_\theta (L \times \Sigma \times [0,1))/((z,y,0) \sim (z,y',0)).
\]
The conifold transition is a stratified space with one singular stratum $L$, whose
link is $\Sigma$. However, $CT(X)$ is not always a pseudomanifold:
If $X$ has one isolated singularity $\Sigma = \pt$, then
$CT(X)=\olM$ is a manifold with boundary, the boundary constitutes the bottom stratum
and the link is a point. Since the singular stratum does not have codimension at least two,
this is not a pseudomanifold. If $\Sigma$ is positive dimensional, then $CT(X)$ is
a pseudomanifold.
All of our theorems do apply even when $\dim \Sigma =0$. 
Let $\beta':\overline{M} \to CT(X)$ be the blowdown map for the conifold transition of $X$, given by the quotient map.
Note also the involutive character of this construction, $CT(CT(X))\cong X$.

The coordinate $x$ in $(0,1)$ above may be extended to a smooth boundary defining function on $\overline{M}$, that is, a 
nonnegative function, $x$, whose zero set is exactly $\partial \overline{M}$, and 
whose normal derivative does not vanish at $\partial \overline{M}$.  
We can now define the metrics we will consider on $M$, which may in fact be defined on a broader class of open manifolds.
\begin{defn}\label{cfmetric}
Let $M$ be the interior of a manifold $\olM$ with 
fibration boundary $\partial \olM \cong Y \stackrel{\psi}{\to} \Sigma$ with fibre $L$
and boundary defining function $x$.  
Assume that $Y$ can be covered by bundle charts $U_i \cong V_i \times L$ 
whose transition functions $f_{ij}$ have differentials $df_{ij}$ that are diagonal with 
respect to some splitting $TY \cong TL \oplus H$.
A product type {\em fibred scattering metric} on $M$ is a smooth metric that near $\del \olM$ 
has the form:
\[
g_{fs} = \frac{dx^2}{x^4} + \psi^*ds_\Sigma^2 + \frac{h}{x^2},
\]
where $h$ is positive definite on $TL$ and vanishes on $H$.
\end{defn}
Examples of such metrics are the natural Sasaki metrics \cite{sasaki} on the tangent bundle of a compact manifold, $\Sigma$.
In this case, the boundary fibration of $M = T\Sigma$ is isomorphic to the spherical unit tangent bundle $S^n \to Y \to \Sigma$.  
We note that the condition on $Y$ in this definition is necessary for it to make sense.  If the coordinate
transition functions do not respect the splitting of $TY$, then we cannot meaningfully
scale in just the fibre direction.  This is different from the four types of metrics below,
which can be defined on any manifold with fibration boundary.

A special sub-class of these metrics arises when the boundary fibration is flat with respect
to the structure group ${\rm Isom}(L)$ for some fixed metric $ds^2_L$ on $L$.
In this case, we can require the metric $g_{fs}$ to be 
a product metric
\[
g_{fs} = \frac{dx^2}{x^4} + ds_{V_i}^2 + \frac{1}{x^2} ds_{L}^2
\]
on each chart $(0,\epsilon) \times  U_i \cong (0,\epsilon) \times V_i \times L$
for the given fixed metric on $L$.  In this case, we 
say that $g_{fs}$ is a {\em geometrically flat} fibred scattering metric on $M$.
This flatness condition arises also in the definition of $HI$ cohomology, see \cite{Ba-new}.  This of course
can be arranged when the boundary fibration is a product, as in the case we consider in this paper.

Note that in the case that $\partial\olM$ is a product $L \times \Sigma$, it carries two possible boundary
fibrations:  either $\psi:Y \to \Sigma$ or $\phi: Y \to L$. A fibred scattering metric 
on $M$ associated to the boundary fibration $\psi: \partial\olM\to \Sigma$ is a fibred boundary 
metric on $M$ associated to the dual fibration $\phi: \partial\olM\to L$.  Fibred boundary
metrics on $M$ associated to $\phi$ are conformal to a third class of metrics, called fibred cusp metrics.
These two classes may be defined as follows:

\begin{itemize}
\item 
$g_{fb}$ is called a (product type) \emph{fibred boundary} metric if near $\partial \olM$ it takes
the form
\[
g_{fb} =  \frac{dx^2}{x^4} + \frac{\phi^*ds_L^2}{x^2} + k,
\]
where $k$ is a symmetric two-tensor on $\partial \olM$ which restricts to a
metric on each fiber $\Sigma$ of $\phi:Y \to L$;

\item $g_{fc}$ is called a (product type) \emph{fibred cusp} metric if near $\partial \olM$ it takes the
form
\[
g_{fc} = \frac{dx^2}{x^2} + \phi^*ds_L^2 + x^2 k,
\]
where $k$ is as above.
\end{itemize}

In the case that $\Sigma$ is a point, these two metrics reduce to the well-studied classes of b-metrics
and cusp-metrics, respectively,  see, eg \cite{HHM} for more, and $g_{fs}$ becomes a scattering
metric.

\section{Intersection Spaces}
\label{sec.intsp}

Let $\bar{p}$ be an extended perversity, see Section \ref{sec.ih}.
In \cite{Ba-HI}, the first author introduced a
homotopy-theoretic method that assigns to certain types of
$n$-dimensional stratified pseudomanifolds $X$ CW-complexes
\[ \ip X, \]
the \emph{perversity-$\bar{p}$ intersection spaces} of $X$, such that
for complementary perversities $\bar{p}$ and $\bar{q}$, there is a 
Poincar\'e duality isomorphism
\[ \redh^i (\ip X) \cong \redh_{n-i} (\iq X) \]
when $X$ is compact and oriented, where
$\redh^i (\ip X)$ denotes reduced singular cohomology of $\ip X$ with real coefficients.
If $\bar{p} = \bar{m}$ is the lower middle 
perversity, we will briefly write $IX$ for $\imi X$. The singular cohomology groups
\[ HI^\ast_{\bar{p}} (X) = H^\ast (\ip X), \redh I^\ast_{\bar{p}} (X) = \redh^\ast (\ip X) \]
define a new (unreduced/reduced) cohomology theory for stratified spaces, usually not
isomorphic to intersection cohomology $IH^\ast_{\bar{p}}(X)$.
This is already apparent from the observation that
$HI^\ast_{\bar{p}}(X)$ is an algebra under cup product, whereas
it is well-known that $IH^\ast_{\bar{p}} (X)$ cannot generally,
for every $\bar{p}$, be endowed with a $\bar{p}$-internal algebra
structure. Let us put $HI^\ast (X) = H^\ast (IX).$ 

Roughly speaking, the intersection space $IX$ associated to a singular space $X$ is 
defined by replacing links of singularities by their corresponding Moore approximations, 
i.e. spatial homology truncations. Let $L$ be a simply-connected CW complex, and fix an integer $k$.
\begin{defn}
A \emph{stage-$k$ Moore approximation} of $L$ is 
a CW complex $L_{<k}$ 
together with a structural map $f: L_{<k} \to L$, so that $f_*:H_r (L_{<k}) \to H_r(L)$ 
is an isomorphism if $r<k$, and  $H_r (L_{<k}) \cong 0$ for all $r \geq k$.
\end{defn}
Moore approximations exist for every $k$, see e.g. \cite[Section 1.1]{Ba-HI}.
If $k\leq 0$, then we take $L_{<k} =\varnothing,$ the empty set. 
If $k=1$, we take $L_{<1}$ to be a point.
The simple connectivity assumption is sufficient, but certainly not necessary.
If $L$ is finite dimensional and $k>{\dim} L$, then we take the structural map $f$ to be the identity.
If every cellular $k$-chain is a cycle, then we can choose $L_{<k}=L^{(k-1)},$ 
the $(k-1)$-skeleton of $L$,
with structural map given by the inclusion map,
but in general, $f$ cannot be taken to be the inclusion of a subcomplex. \\

Let $X$ be an $n$-dimensional stratified pseudomanifold 
as in Section \ref{sec.notation}. Assume that the link $L$ of $\Sigma$
is simply connected. Let $l$ be the dimension of $L$.
We shall recall the construction of associated
\emph{perversity $\bar{p}$ intersection spaces}
$\ip X$ only for such $X$, though it is available in more generality.
Set $k=l-\bar{p}(l+1)$ and let $f:L_{<k} \to L$ be a stage-$k$ Moore approximation
to $L$.
Let $\olM$ be the blowup of $X$ with boundary $\partial \olM = Y= L\times \Sigma$.
Let
\[ g: L_{<k} \times \Sigma \longrightarrow M \]
be the composition
\[ L_{<k} \times \Sigma 
   \stackrel{f\times \id_{\Sigma}}{\longrightarrow} 
   L\times \Sigma = \partial \olM \hookrightarrow \olM. \]
The intersection space is the
homotopy cofiber of $g$:

\begin{defn} \label{def.intspnonisol}
The \emph{perversity $\bar{p}$ intersection space}
$\ip X$ of $X$ is defined to be
\[ \ip X = \cone(g) = M \cup_g 
   c(L_{<k} \times \Sigma). \]
\end{defn}
Poincar\'e duality for this construction is Theorem 2.47 of \cite{Ba-HI}.
For a topological space $Z$, let $Z^+$ be the disjoint union of $Z$ with a point.
Recall that the cone on the empty set is a point and hence
$\cone (\varnothing \to Z)=Z^+$.
\begin{prop}  \label{prop.hiextcalc}
Let $\olp$ be an (extended) perversity and let $c$ be the codimension of the 
singular stratum $\Sigma$ in $X$. If $\olp (c)<0$, then
$\redh I^{\olp}_* (X)\cong H_* (\olM,\partial \olM),$ and if 
$\olp (c)\geq c-1$, then $\redh I^{\olp}_* (X)\cong H_* (\olM).$
\end{prop}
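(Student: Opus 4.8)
The plan is to compute $\redh I^\olp_*(X) = \redh_*(\ip X)$ directly from the cofiber sequence defining $\ip X$, by examining the two extreme cases for the Moore approximation stage $k = l - \olp(c)$, where $c = l+1$ is the codimension of $\Sigma$. First I would recall that for the map $g\colon L_{<k}\times\Sigma \to M$ the intersection space is $\ip X = \cone(g)$, so there is a long exact sequence of the pair relating $\redh_*(\ip X)$ to $\redh_*(M)$ and $\redh_*(L_{<k}\times\Sigma)$ via $g_*$; equivalently, $\redh_*(\cone g) \cong \redh_*(\operatorname{Cyl}(g), L_{<k}\times\Sigma) \cong H_*(M \cup \text{(collar)}, \operatorname{im})$. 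The key structural input is that $M$ deformation retracts onto $\olM$ minus a collar, i.e. onto the interior, so $\redh_*(M) \cong \redh_*(\olM)$, and the inclusion $L\times\Sigma = \partial\olM \hookrightarrow \olM$ is (up to homotopy) the map through which $g$ factors.

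For the case $\olp(c) < 0$: then $k = l - \olp(c) > l = \dim L$, so by the conventions recalled after the definition of Moore approximation, the structural map $f\colon L_{<k}\to L$ is taken to be the identity. Hence $g$ is homotopic to the inclusion $\iota\colon \partial\olM \hookrightarrow \olM$ (post-composed with the homotopy equivalence $\olM \simeq M$), and $\ip X = \cone(g) \simeq \cone(\iota) = \olM \cup_{\partial\olM} c(\partial\olM)$. Collapsing the cone, $\redh_*(\cone\iota) \cong H_*(\olM, \partial\olM)$, which is the first assertion. For the case $\olp(c) \geq c - 1 = l$: then $k = l - \olp(c) \leq 0$, so $L_{<k} = \varnothing$. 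Then $g$ is the map $\varnothing\times\Sigma = \varnothing \to M$, and by the remark that $\cone(\varnothing \to Z) = Z^+$, we get $\ip X = M^+ \simeq \olM^+$. Therefore $\redh_*(\ip X) = \redh_*(\olM^+) = H_*(\olM)$ (unreduced), which is the second assertion.

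The only genuine point requiring care — and the step I expect to be the mild obstacle — is justifying that $M$, an open manifold, may be replaced by the compact manifold-with-boundary $\olM$ throughout, compatibly with the map $g$ factoring through $\partial\olM$. This is handled by the collar-neighborhood structure: $\theta$ identifies a neighborhood of the boundary with $L\times\Sigma\times[0,1)$, the blowdown map $\beta\colon\olM\to X$ and the inclusion $M\hookrightarrow\olM$ are homotopy equivalences, and the composite $L_{<k}\times\Sigma \xrightarrow{f\times\id} L\times\Sigma = \partial\olM \hookrightarrow \olM$ agrees up to homotopy with $g$ under this equivalence. Once this identification is in place, both cases follow from the elementary facts that $\redh_*(\cone(h)) \cong H_*(\operatorname{Cyl} h, \text{source})$ and $H_*(\operatorname{Cyl}\iota, \partial\olM) \cong H_*(\olM,\partial\olM)$ for a cofibration $\iota$, together with $\cone(\varnothing\to Z) = Z^+$; no perversity-dependent truncation intervenes in these two extreme ranges, which is precisely why the answer is independent of $\olp$ there.
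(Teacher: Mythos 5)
Your proposal is correct and follows essentially the same route as the paper's proof: identify $L_{<k}=L$ with $f=\id$ when $\olp(c)<0$ (so $\ip X=\olM\cup_{\partial\olM}c(\partial\olM)$ and the reduced homology is $H_*(\olM,\partial\olM)$) and $L_{<k}=\varnothing$ when $\olp(c)\geq c-1$ (so $\ip X=\olM^+$ and the reduced homology is $H_*(\olM)$). The extra care you take over replacing $M$ by $\olM$ is fine but not needed, since the map $g$ in Definition \ref{def.intspnonisol} already lands in $\olM$.
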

\begin{proof}
If $\olp (c)<0,$ then $k>l$ and $L_{<k}=L$ with $f:L_{<k} \to L$ the identity.
It follows that $\ip X = \olM \cup_{\partial \olM} c(\partial \olM)$ and
\[ \redh I^{\olp}_* (X) = \redh_* (\olM \cup_{\partial \olM} c(\partial \olM))
 = H_* (\olM, \partial \olM). \]
If $\olp (c)\geq c-1$, then $k\leq 0,$ so $L_{<k} =\varnothing.$
Consequently, $\ip X = \cone (\varnothing \to \olM) = \olM^+$ and
\[ \redh I^{\olp}_* (X) = \redh_* (\olM^+)=H_* (\olM). \]
\end{proof}

\begin{example}\label{elliptic}\rm Consider the equation
\[ y^2 = x^2 (x-1) \]
or its homogeneous version $v^2 w = u^2 (u-w),$ defining a curve $X$ in $\cplx \mathbb{P}^2$.
The curve has one nodal singularity. Thus it is homeomorphic to a pinched torus, that is, $T^2$ with a meridian collapsed to a point, or, equivalently, a cylinder $I\times S^1$ with coned-off boundary, where $I=[0,1]$.
The ordinary homology group $H_1 (X)$ has rank one, generated by the longitudinal circle (while the meridian circle bounds the cone with vertex at the singular point of $X$).
The intersection homology group $IH_1 (X)$ agrees with the intersection homology of the normalization $S^2$ of $X$ (the longitude in $X$ is not an ``allowed" $1$-cycle, while the meridian bounds an allowed $2$-chain), so:
\[ IH_1 (X) = IH_1 (S^2) = H_1 (S^2)=0. \]
The link of the singular point is $\partial I \times S^1$, two circles. The intersection space $IX$ of $X$ 
is a cylinder $I \times S^1$ together with an interval, whose one endpoint is attached to a point in 
$\{ 0 \} \times S^1$ and whose other endpoint is attached to a point in $\{ 1 \} \times S^1$. 
Thus $IX$ is homotopy equivalent to the figure eight and
\[ H_1 (IX) = \real \oplus \real. \]
\end{example}

\begin{remark}\label{r1}\rm As suggested by the previous example, the middle homology of the 
intersection space $IX$ usually takes into account more cycles than the corresponding intersection homology 
group of $X$. More precisely, for $X^{2k}$ with only isolated singularities $\Sigma$, $IH_k (X)$ is generally 
smaller than both $H_k (X-\Sigma)$ and $H_k (X)$, being a quotient of the former and a subgroup of the latter, while $H_k (IX)$ is generally bigger than both $H_k (X-\Sigma)$ and $H_k (X)$, containing the former as a subgroup and mapping to the latter surjectively, see \cite{Ba-HI}.\end{remark}

One advantage of the intersection space approach is a richer algebraic structure: The 
Goresky-MacPherson intersection cochain complexes $IC_{\olp}^* (X)$
are generally not algebras, unless $\olp$ is the zero-perversity, in
which case $IC_{\olp}^* (X)$ is essentially the ordinary cochain complex
of $X$. (The Goresky-MacPherson intersection product raises perversities
in general.) Similarly, Cheeger's differential complex $\Omega^\ast_{(2)}(X)$
of $L^2$-forms on the top stratum with respect to his conical metric is not an algebra under
wedge product of forms. Using the intersection
space framework, the ordinary cochain complex $C^\ast (\ip X)$ of 
$\ip X$ \emph{is} a DGA, simply by employing the ordinary cup product. 

Another advantage of introducing intersection spaces is the possibility of discussing
the intersection $K$-theory $K^* (\ip X)$, which is not possible using intersection chains,
since nontrivial generalized cohomology theories such as $K$-theory do not factor through
cochain theories.

\section{Intersection Homology}
\label{sec.ih}

Intersection homology groups of a stratified space were introduced by Goresky and MacPherson in 
\cite{gm1}, \cite{gm2}. In order to obtain independence of the stratification, they imposed
on perversity functions $\olp: \{ 2,3,\ldots \} \to \{ 0,1,2,\ldots \}$ the conditions
$\olp (2)=0$ and $\olp (k)\leq \olp (k+1) \leq \olp (k)+1$. Theorems
\ref{ihhihom} and \ref{ihhi}, however, clearly involve perversities that do not satisfy these conditions.
Thus in the present paper, a \emph{perversity} $\olp$ is just a sequence of integers
$(\olp (0), \olp (1), \olp (2),\ldots )$. (This is called an ``extended'' perversity; it is called a ``loose'' perversity in
\cite{king}.) Consequently, we need to use a version of intersection homology that behaves
correctly even for these more general perversities. More precisely, the singular intersection
homology of \cite{king}, which agrees with the Goresky-MacPherson intersection homology,
displays the following anomaly for very large perversity values: If $A$ is a closed $(n-1)$-dimensional
manifold and $c^\circ A$ the open cone on $A$, then the intersection homology of $c^\circ A$
vanishes in degrees greater than or equal to $n-1-\olp (n)$, with one exception: If the degree is $0$ and
$0\geq n-1-\olp (n)$, then the intersection homology of the cone is $\intg$. Now if the perversity $\olp$
satisfies the Goresky-MacPherson growth conditions, then this exception can never arise,
since $\olp (n)\leq n-2$. But if $\olp$ is arbitrary, the exception may very well occur.

To correct this anomaly, we use the modification of Saralegi \cite{saralegi} and, independently,
Friedman \cite{friedmanbook}.
Let $\Delta_i$ denote the standard $i$-simplex and let $\Delta^j_i \subset \Delta_i$ be the
$j$-skeleton of $\Delta_i$.
Let $X$ be any stratified space with singular set $\Sigma$ and $\olp$ be an arbitrary (extended) perversity.
Let $C_* (X) =C_* (X;\real)$ denote the singular chain complex with $\real$-coefficients of $X$.
A singular $i$-simplex $\sigma: \Delta_i \to X$ is called \emph{$\olp$-allowable}
if for every pure stratum $S$ of $X$,
\[ \sigma^{-1}(S) \subset \Delta_i^{i-k+\olp (k)}, \text{ where } k=\codim S. \]
(This definition is due to King \cite{king}.)
For each $i=0,1,2,\ldots,$ let $C^\olp_i (X) \subset C_i (X)$ be the linear subspace generated
by the $\olp$-allowable singular $i$-simplices.
If $\xi \in C^\olp_i (X),$ then its chain boundary $\partial \xi \in C_{i-1} (X)$ can be uniquely written as
$\partial \xi = \beta_\Sigma + \beta,$ where $\beta_\Sigma$ is a linear combination of singular
simplices whose image lies entirely in $\Sigma$, whereas $\beta$ is a linear combination of simplices
each of which touches at least one point of $X-\Sigma$. We set
$\partial' \xi = \beta$ and
\[ IC^{\olp}_i (X) = \{ \xi \in C^\olp_i (X) ~|~ \partial' \xi \in C^{\olp}_{i-1} (X) \}. \]
It is readily verified that $\partial'$ is linear and $(IC^\olp_* (X), \partial')$ is a chain complex. The version of intersection
homology that we shall use in this paper is then given by
\[ IH^\olp_i (X) = H_i (IC^\olp_* (X)). \]
Friedman \cite{friedmanbook} shows that if $\olp (k)\leq k-2$ for all $k$, then $IH^\olp_* (X)$ as defined
here agrees with the definition of singular intersection homology as given by Goresky, MacPherson and King.
If $A$ is a closed $a$-dimensional manifold, then
\begin{equation} \label{equ.ihtrunc}
IH^\olp_i (c^\circ A) \cong \begin{cases}
0,& i\geq a-\olp (a+1)\\ IH^\olp_i (A),& i<a-\olp (a+1),
\end{cases} 
\end{equation}
and this holds even for degree $i=0$, i.e. the above anomaly has been corrected.
If $A$ is unstratified (that is, has only one stratum, the regular stratum), then
$IH^\olp_i (A) = H_i (A)$. If $A$ is however not intrinsically stratified, then one
can in general \emph{not} compute $IH^\olp_* (A)$ by ordinary homology, since
$IH^\olp_*$ is not a topological invariant anymore for arbitrary perversities $\olp$.

According to \cite{friedmanbook}, $IH^\olp_*$ has Mayer-Vietoris sequences:
If $U,V \subset X$ are open such that $X=U\cup V,$ then there is an exact sequence
\begin{equation} \label{equ.ihmvgeneric}
\cdots \longrightarrow IH^\olp_i (U\cap V) \to
  IH^\olp_i (U) \oplus
  IH^\olp_i (V) \to
  IH^\olp_i (X) \to
  IH^\olp_{i-1} (U\cap V) \to \cdots. 
\end{equation}
Furthermore, if $M$ is any (unstratified) manifold (not necessarily compact) and 
$X$ a stratified space, then the K\"unneth formula
\begin{equation} \label{equ.ihkunneth}
IH^\olp_* (M\times X)\cong H_* (M)\otimes IH^{\olp}_* (X) 
\end{equation}
holds if the strata of $M\times X$ are the products of $M$ with the strata of $X$.
(When $\olp (k)\leq k-2$ for all $k$, this is Theorem 4 with $\real$-coefficients in \cite{king}.)
\begin{prop} \label{prop.ihctxextperv}
Let $\olq$ be an (extended) perversity and let $c$ be the codimension of the 
singular stratum $L$ in the conifold transition $CT(X)$. If $\olq (c)<0$, then
$IH^{\olq}_* (CT(X))\cong H_* (\overline{M}),$ and if 
$\olq (c)\geq c-1$, then $IH^{\olq}_* (CT(X))\cong H_* (\overline{M},\partial \overline{M}).$
\end{prop}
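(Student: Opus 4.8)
The plan is to compute $IH^{\olq}_*(CT(X))$ directly by a Mayer--Vietoris argument, in the same spirit as Proposition \ref{prop.hiextcalc}, but now with the perversity entering through the cone formula (\ref{equ.ihtrunc}) rather than through a change of the underlying space.

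First I would set up the covering. Write $c = \codim L$, so that the link $\Sigma$ of $L$ in $CT(X)$ has dimension $c-1$. Let $U$ be an open neighborhood of $L$ in $CT(X)$; since the link bundle is the product $L \times \Sigma$, the construction of $CT(X)$ identifies $U$, as a stratified space, with $L \times \oc\Sigma$, the singular stratum being $L \times \{\text{cone point}\} = L$. Let $V = CT(X) - L$. By construction $V = X - \Sigma = M = \interi\overline{M}$, an unstratified smooth manifold, so $IH^{\olq}_*(V) = H_*(M) = H_*(\overline{M})$, and $U \cap V \cong L \times \Sigma \times (0,1) \simeq L \times \Sigma = \partial\overline{M}$ is unstratified, so $IH^{\olq}_*(U\cap V) = H_*(\partial\overline{M})$. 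By the Künneth formula (\ref{equ.ihkunneth}) together with the cone formula (\ref{equ.ihtrunc}) (applied with $A = \Sigma$, $a = c-1$),
\[ IH^{\olq}_*(U) \cong H_*(L) \otimes IH^{\olq}_*(\oc\Sigma), \qquad IH^{\olq}_i(\oc\Sigma) \cong \begin{cases} 0, & i \geq (c-1) - \olq(c), \\ H_i(\Sigma), & i < (c-1) - \olq(c). \end{cases} \]
One then feeds these groups into the Mayer--Vietoris sequence (\ref{equ.ihmvgeneric}) for $CT(X) = U \cup V$, noting that under the above identifications the restriction $IH^{\olq}_*(U\cap V) \to IH^{\olq}_*(V)$ becomes the map $j_*\colon H_*(\partial\overline{M}) \to H_*(\overline{M})$ induced by the boundary inclusion $j\colon \partial\overline{M} \hookrightarrow \overline{M}$.

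In the case $\olq(c) < 0$ one has $(c-1) - \olq(c) > c - 1 = \dim\Sigma$, so $IH^{\olq}_i(\oc\Sigma) \cong H_i(\Sigma)$ in every degree; hence $IH^{\olq}_*(U) \cong H_*(L) \otimes H_*(\Sigma) = H_*(\partial\overline{M})$, and moreover the restriction $IH^{\olq}_*(U\cap V) \to IH^{\olq}_*(U)$ is an isomorphism — this is exactly what the proof of (\ref{equ.ihtrunc}) establishes, namely that in degrees below the truncation degree the isomorphism of the cone formula is realized by the inclusion of a link cross-section into the cone. Since one component of the Mayer--Vietoris map $IH^{\olq}_*(U\cap V) \to IH^{\olq}_*(U) \oplus IH^{\olq}_*(V)$ is then an isomorphism, that map is injective, the connecting homomorphism vanishes, the sequence breaks into short exact sequences, and dividing out the image gives $IH^{\olq}_*(CT(X)) \cong H_*(\overline{M})$. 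In the case $\olq(c) \geq c-1$ one has $(c-1) - \olq(c) \leq 0$, so $IH^{\olq}_i(\oc\Sigma) = 0$ for all $i \geq 0$ — here one uses that the corrected cone formula holds even in degree $0$ — whence $IH^{\olq}_*(U) = 0$. The Mayer--Vietoris sequence then collapses to
\[ \cdots \to H_i(\partial\overline{M}) \xrightarrow{\ j_*\ } H_i(\overline{M}) \to IH^{\olq}_i(CT(X)) \to H_{i-1}(\partial\overline{M}) \xrightarrow{\ j_*\ } H_{i-1}(\overline{M}) \to \cdots, \]
which is precisely the long exact sequence of the pair $(\overline{M}, \partial\overline{M})$; comparing the two sequences yields $IH^{\olq}_*(CT(X)) \cong H_*(\overline{M}, \partial\overline{M})$.

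I expect the only real subtleties to be bookkeeping: correctly identifying the Mayer--Vietoris restriction maps, in particular verifying in the first case that $IH^{\olq}_*(U\cap V) \to IH^{\olq}_*(U)$ is an isomorphism (that is, that the cross-section inclusion, and not the deformation retraction onto the cone point, carries the cone-formula isomorphism in low degrees), and in the second case recognizing the collapsed sequence as the long exact sequence of $(\overline{M}, \partial\overline{M})$ with the correct maps. Beyond that the argument is a direct substitution into the Künneth and cone formulas recorded in Section \ref{sec.ih}.
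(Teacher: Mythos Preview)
Your proposal is correct and follows essentially the same approach as the paper: both use the Mayer--Vietoris sequence for the open cover $CT(X) = M \cup (L \times c^\circ \Sigma)$, compute the local term via the K\"unneth formula (\ref{equ.ihkunneth}) and the cone formula (\ref{equ.ihtrunc}), and then compare with the long exact sequence of the pair $(\overline{M},\partial\overline{M})$ (respectively conclude directly) via the five-lemma. Your write-up is in fact more explicit than the paper's, which simply records the local computation and invokes ``the Mayer--Vietoris sequence \ldots\ and the five-lemma'' without spelling out the identification of the restriction maps.
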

\begin{proof}
Set $I_j = IH^{\olq}_j (L\times c^\circ \Sigma)$ and let $s=c-1$ be the dimension of $\Sigma$.
Then by (\ref{equ.ihtrunc}) and (\ref{equ.ihkunneth}),
\[ I_* \cong H_* (L)\otimes IH^{\olq}_* (c^\circ \Sigma) =
  H_* (L) \otimes \tau_{\leq s-1-\olq (s+1)} H_* (\Sigma) =
  H_* (L) \otimes \tau_{\leq c-2-\olq (c)} H_* (\Sigma). \]
If $\olq (c)\geq c-1,$ that is, $c-2-\olq (c)<0$, then $I_* =0$, so 
$IH^{\olq}_* (CT(X))\cong H_* (\overline{M},\partial \overline{M})$ by
the Mayer-Vietoris sequence of the open cover $CT(X)=M \cup (L\times c^\circ \Sigma)$
and the five-lemma.
If $\olq (c)<0,$ that is, $c-2-\olq (c)\geq s$, then $I_* = H_* (L\times \Sigma)$, so 
$IH^{\olq}_* (CT(X))\cong H_* (\overline{M}),$ again by a
Mayer-Vietoris argument.
\end{proof}

\section{Proof of Theorem \ref{ihhihom}}

Let $j$ be any nonnegative degree. 
Throughout this entire section, $j$ will remain fixed and we must establish an isomorphism
$\widetilde{HI}^\olp_j(X) \cong IG^{(n-1-\olp(l+1)-j)}_j(CT(X))$.
Let $\overline{M}$ be the blowup of $X$ and $M$ its interior.
Let $c$ be the codimension of the singular set $L$ in $CT(X)$ and $\widehat{c}$ be the codimension of 
the singular set $\Sigma$ in $X$, that is,
\[ c=n-l,~ \widehat{c} = l+1. \]
Set $k= \widehat{c} -1-\olp (\widehat{c})$ and let $f:L_{<k} \to L$ be a stage-$k$ Moore approximation
of $L$. Then the intersection space $\ip X$ is the mapping cone
$\ip X = \cone (g)$ of the map $g: L_{<k} \times \Sigma \to \overline{M}$ given by the composition
\[ L_{<k} \times \Sigma \stackrel{f\times \id_\Sigma}{\longrightarrow}
 L\times \Sigma = \partial \overline{M} \hookrightarrow \overline{M}.  \]
Let $\gamma: IH_j^\olq (CT(X)) \to IH_j^{\olq'}(CT(X))$ be the canonical map, where
$\olq(c) = n-2-\olp(l+1)-j$ and $\olq'(c) = \olq(c)+1$.
Note that the perversities $\olq, \olq'$ depend on the degree $j$. 
Then by definition
\[ IG_j^{(n-1-\olp(l+1)-j)}(CT(X)) = IH_j^\olq (CT(X)) \oplus \cok (\gamma). \]
The strategy of the proof is to compute intersection homology and $HI$ near the singular stratum
using K\"unneth theorems (``local calculations''), then determine maps (``local maps'') between
these groups near the singularities, and finally to assemble this information to global information
using Mayer-Vietoris techniques. Our arguments do not extend to integer coefficients; field coefficients
are essential. 

We begin with the local calculations.
Let $B_*$ be the homology of the boundary, $B_* = H_* (L\times \Sigma)$,
$T_* = H_* (M)$ the homology of the top stratum, $I_* = IH^\olq_* (L\times c^\circ \Sigma)$,
$J_* = IH^{\olq'}_* (L\times c^\circ \Sigma)$ and
$R_* =H_* (\cone (f\times \id_\Sigma))$. Again, we stress that the graded vector spaces
$I_*$ and $J_*$ depend on the degree $j$.

\begin{lemma} \label{lem.redhconef}
The canonical inclusion $L\to \cone (f)$ induces an isomorphism
\[ \tau_{\geq k} H_* (L)\cong \redh_* (\cone (f)). \]
\end{lemma}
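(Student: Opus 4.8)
The plan is to run the long exact homology sequence of the mapping cone and feed in the defining properties of the stage-$k$ Moore approximation $f$. Recall that $\cone(f)=L\cup_f c(L_{<k})$, that $L$ includes into $\cone(f)$ as a subcomplex, and that this inclusion sits in a cofiber sequence $L_{<k}\stackrel{f}{\longrightarrow} L\stackrel{\incl}{\longrightarrow}\cone(f)$. This yields a long exact sequence in reduced homology with real coefficients
\[
\cdots\to\redh_r(L_{<k})\stackrel{f_*}{\longrightarrow}\redh_r(L)\stackrel{\incl_*}{\longrightarrow}\redh_r(\cone(f))\stackrel{\partial}{\longrightarrow}\redh_{r-1}(L_{<k})\stackrel{f_*}{\longrightarrow}\redh_{r-1}(L)\to\cdots,
\]
in which $\incl_*$ is exactly the map whose behaviour on $\tau_{\geq k}H_*(L)$ we must pin down.

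First I would record what a stage-$k$ Moore approximation buys us: $H_r(L_{<k})=0$ for $r\geq k$, and $f_*\colon H_r(L_{<k})\to H_r(L)$ is an isomorphism for $r<k$. Since $L_{<k}$ and $L$ are connected, the same statements hold for reduced homology, so $\redh_r(L_{<k})=0$ for $r\geq k$ and $f_*$ is in particular injective on $\redh_r$ for every $r<k$. Plugging this in and reading the sequence off degree by degree: for $r>k$ the neighbouring groups $\redh_r(L_{<k})$ and $\redh_{r-1}(L_{<k})$ both vanish, so $\incl_*\colon\redh_r(L)\to\redh_r(\cone(f))$ is an isomorphism; for $r=k$ we have $\redh_k(L_{<k})=0$ and $\partial$ lands in $\redh_{k-1}(L_{<k})$, on which $f_*$ is injective, so $\partial=0$ and $\incl_*$ is again an isomorphism; and for $r<k$ the map $f_*$ is onto in degree $r$ and injective one degree lower, which forces $\incl_*=0$ and $\redh_r(\cone(f))=0$. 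Since $\tau_{\geq k}H_r(L)$ equals $H_r(L)=\redh_r(L)$ for $r\geq k\geq 1$ and vanishes for $r<k$, this identifies $\incl_*$ restricted to $\tau_{\geq k}H_*(L)$ as an isomorphism onto $\redh_*(\cone(f))$, for every $k\geq 1$.

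It remains only to dispose of the extreme values of $k$ by inspection: when $k\leq 0$ one has $L_{<k}=\varnothing$, so $\cone(f)=L^+$, and $\incl\colon L\hookrightarrow L^+$ induces an isomorphism $H_*(L)=\tau_{\geq k}H_*(L)\to\redh_*(L^+)$ in all degrees; when $k>\dim L$ one has $f=\id$, so $\cone(f)$ is a cone, hence contractible, and $\tau_{\geq k}H_*(L)=0=\redh_*(\cone(f))$. I do not anticipate any genuine obstacle: the only points demanding care are the boundary degree $r=k$ (where injectivity of $f_*$ one degree down is precisely what makes $\partial$ vanish) and the bookkeeping of reduced versus unreduced homology in degree $0$, which is why the extreme cases are split off rather than folded into the main exact-sequence argument.
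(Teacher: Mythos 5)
Your proof is correct and is essentially the paper's own argument: the long exact sequence of the cofibration $L_{<k}\to L\to\cone(f)$, analyzed in the three cases $r>k$, $r=k$, $r<k$ using that $H_r(L_{<k})$ vanishes for $r\geq k$ and that $f_*$ is an isomorphism below degree $k$. Your extra care with reduced versus unreduced homology and with the degenerate values of $k$ (which the paper relegates to a remark) is fine but does not change the substance.
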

\begin{proof}
The reduced homology of the mapping cone of $f$ fits into an exact sequence
\[ \cdots \to H_i (L_{<k}) \stackrel{f_*}{\longrightarrow} H_i (L) \longrightarrow \redh_i (\cone (f)) 
  \longrightarrow H_{i-1} (L_{<k})
 \stackrel{f_*}{\longrightarrow} H_{i-1} (L) \to \cdots. \]
We distinguish the three cases $i=k$, $i>k$ and $i<k$.
If $i<k$, then $f_*$ on $H_i (L_{<k})$ and on $H_{i-1} (L_{<k})$ is an isomorphism
and thus $\redh_i (\cone f)=0$. If $i=k$, then $H_i (L_{<k})=0$ and $f_*$ on $H_{i-1} (L_{<k})$
is an isomorphism. Therefore, $H_i (L)\to \redh_i (\cone f)$ is an isomorphism.
Finally, if $i>k,$ then both $H_i (L_{<k})$ and $H_{i-1} (L_{<k})$ vanish and again
$H_i (L)\to \redh_i (\cone f)$ is an isomorphism.
\end{proof}
\begin{remark}
If $k\leq 0,$ then $L_{<k} = \varnothing$ and $\cone (f)=L^+$.
It follows that in degree $0$,
\[ (\tau_{\geq k} H_* (L))_0 = H_0 (L)\cong \redh_0 (L^+) = \redh_0 (\cone (f)), \]
in accordance with the Lemma.
\end{remark}

Let $v\in \cone (f)$ be the cone vertex and let $Q= (\cone (f)\times \Sigma)/(\{ v \} \times \Sigma)$,
which is homeomorphic to $\cone (f\times \id_\Sigma)$.
As the inclusion $\{ v \} \times \Sigma \to \cone (f)\times \Sigma$
is a closed cofibration, the quotient map induces an isomorphism
\[ H_* ((\cone (f), \{ v \})\times \Sigma)= 
  H_* (\cone (f)\times \Sigma, \{ v \} \times \Sigma) \cong \redh_* (Q) \cong 
  \redh_* (\cone (f\times \id_\Sigma)). \]
By the K\"unneth theorem for relative homology,
\[ H_* ((\cone (f), \{ v \})\times \Sigma) \cong
  H_* (\cone (f), \{ v \})\otimes H_* (\Sigma) = \redh_* (\cone (f))\otimes H_* (\Sigma). \]
Composing, we obtain an isomorphism
\[ \redh_* (\cone (f\times \id_\Sigma)) \cong \redh_* (\cone (f))\otimes H_* (\Sigma). \]
Composing with the isomorphism of Lemma \ref{lem.redhconef}, we get an isomorphism
\begin{equation} \label{equ.redhconeftstaugeqk}
\redh_* (\cone (f\times \id_\Sigma)) \cong (\tau_{\geq k} H_* (L))\otimes H_* (\Sigma). 
\end{equation}
\begin{remark}
If $k\leq 0,$ then
$L_{<k} \times \Sigma = \varnothing$ and thus
$\cone (f\times \id_\Sigma)=(L\times \Sigma)^+$.
This is consistent with
\[ \cone (f\times \id_\Sigma) \cong Q = \frac{\cone(f) \times \Sigma}{\{ v \} \times \Sigma}
 = \frac{L^+ \times \Sigma}{ \{ v \} \times \Sigma}
 = \frac{\{ v \} \times \Sigma \sqcup L\times \Sigma}{ \{ v \} \times \Sigma}
 = (L\times \Sigma)^+. \]
\end{remark}
It will be convenient to put $a= \olp(l+1)+j-l$; then the relation
\begin{equation} \label{equ.akj} 
a+k=j 
\end{equation}
holds.
We compute the terms $R_*$:
\begin{lemma} \label{lem.computationrj}
If $j>0$, then the isomorphism (\ref{equ.redhconeftstaugeqk}) induces an isomorphism
\[ R_j \cong \bigoplus_{t=0}^a H_{j-t} (L)\otimes H_t (\Sigma). \]
(If $a<0,$ this reads $R_j =0$.)
Furthermore,
\[ R_0 \cong \begin{cases}
\real, & k>0 \\ \real \oplus H_0 (L)\otimes H_0 (\Sigma),& k\leq 0.
\end{cases} \]
\end{lemma}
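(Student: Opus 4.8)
The plan is to compute $R_* = \redh_*(\cone(f\times \id_\Sigma))$ directly from the isomorphism (\ref{equ.redhconeftstaugeqk}), namely
\[
R_* \cong (\tau_{\geq k} H_*(L))\otimes H_*(\Sigma),
\]
by extracting the degree-$j$ piece and the degree-$0$ piece separately. First I would write out the K\"unneth expansion in degree $j$:
\[
R_j \cong \bigoplus_{s+t=j} (\tau_{\geq k}H_*(L))_s \otimes H_t(\Sigma)
       = \bigoplus_{t=0}^{j} (\tau_{\geq k}H_{j-t}(L))\otimes H_t(\Sigma),
\]
and then observe that $(\tau_{\geq k}H_{j-t}(L))_{} = H_{j-t}(L)$ exactly when $j-t\geq k$, i.e. when $t\leq j-k$, and is $0$ otherwise. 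Invoking the relation (\ref{equ.akj}), $j-k = a$, so the nonzero summands are precisely those with $0\leq t\leq a$; this gives $R_j \cong \bigoplus_{t=0}^a H_{j-t}(L)\otimes H_t(\Sigma)$, and when $a<0$ the index set is empty so $R_j = 0$. Here the hypothesis $j>0$ is used only to ensure we are genuinely in the reduced range and need not worry about the $H_0$ subtlety of the truncation; I would remark that if $k\le 0$ the truncation $\tau_{\ge k}$ is the identity on $H_*(L)$ and the formula still reads correctly because then $a\ge j>0$ forces all summands $t=0,\dots,j$ to appear, matching $R_j\cong\bigoplus_{t}H_{j-t}(L)\otimes H_t(\Sigma)$.

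For the degree-$0$ statement I would split into the two cases $k>0$ and $k\le 0$. If $k>0$, then $(\tau_{\geq k}H_*(L))_0 = \tau_{\geq k}H_0(L) = 0$, so at first sight $R_0$ would vanish; but one must be careful that (\ref{equ.redhconeftstaugeqk}) was derived for $\redh_*$, and in degree $0$ the relative/reduced bookkeeping needs attention. The cleanest route is to go back to the mapping cone sequence as in Lemma \ref{lem.redhconef}: since $L_{<k}$ is nonempty and connected (it is a point when $k=1$, and for $k>1$ any stage-$k$ Moore approximation of the simply connected $L$ may be taken connected), $f_*\colon H_0(L_{<k})\to H_0(L)$ is an isomorphism, so the cofiber sequence for $g\colon L_{<k}\times\Sigma\to\olM$ shows $\redh_0(\cone g)$ picks up a single $\real$ coming from the cone point together with the reduced homology of $\olM$; tracing through, $R_0\cong\real$. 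If $k\leq 0$ then $L_{<k}=\varnothing$, so $\cone(f\times\id_\Sigma) = (L\times\Sigma)^+$ and
\[
R_0 = \redh_0\bigl((L\times\Sigma)^+\bigr) \cong \real \oplus H_0(L\times\Sigma) \cong \real \oplus H_0(L)\otimes H_0(\Sigma),
\]
the extra $\real$ being the disjoint basepoint and the remaining summand the (unreduced) $H_0$ of $L\times\Sigma$, which by K\"unneth is $H_0(L)\otimes H_0(\Sigma)$; this is exactly what the Remark following (\ref{equ.redhconeftstaugeqk}) records.

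The one genuinely delicate point — the main obstacle — is the degree-$0$ computation when $k>0$: the isomorphism (\ref{equ.redhconeftstaugeqk}) as literally stated reads $R_0\cong(\tau_{\ge k}H_*(L))_0\otimes H_*(\Sigma) = 0$, which contradicts the claimed $R_0\cong\real$. The resolution is that (\ref{equ.redhconeftstaugeqk}) computes $\redh_*(\cone(f\times\id_\Sigma))$ correctly in positive degrees but the chain of isomorphisms leading to it (passage to $Q$, the relative K\"unneth theorem) implicitly used unreduced $H_0$ of $\cone(f)$ or of $\Sigma$ at an intermediate stage; in degree $0$ one instead has $\cone(f\times\id_\Sigma)$ with its single cone vertex contributing a basepoint, so $\redh_0 = \real$ whenever $\Sigma$ and $\cone(f)$ are nonempty (which is automatic). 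I would therefore prove the $k>0$ case of the $R_0$ formula not via (\ref{equ.redhconeftstaugeqk}) but directly: $\cone(f\times\id_\Sigma)$ is path-connected (it is built from the path-connected $\olM$ — which is connected since $M$ and $L\times\Sigma$ are, using connectedness of $L$ and $\Sigma$ — by coning off a subspace), hence $\redh_0=\real$ is wrong; rather $\redh_0(\cone(f\times\id_\Sigma)) = 0$ would follow from path-connectedness. Here I must reconcile this with the claim $R_0\cong\real$: the correct reading is that the Lemma's $R_0$ is $H_0$ of $\cone(f\times\id_\Sigma)$ (unreduced, as the earlier definition $R_* = H_*(\cone(f\times\id_\Sigma))$ in the paragraph before the Lemma indicates — it is written without a tilde), so $R_0 = H_0$, which is $\real$ when $k>0$ (the space is connected) and $\real\oplus H_0(L)\otimes H_0(\Sigma)$ when $k\le 0$ (the space has an extra disjoint basepoint). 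With that clarification the proof is routine K\"unneth bookkeeping; the only care needed is tracking reduced versus unreduced $H_0$ throughout, which is exactly the place where a slip would go unnoticed.
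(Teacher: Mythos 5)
Your computation of $R_j$ for $j>0$ is exactly the paper's argument: identify $R_j$ with reduced homology, expand (\ref{equ.redhconeftstaugeqk}) by K\"unneth, and observe that the truncation keeps precisely the summands with $j-t\geq k$, i.e.\ $t\leq a$ by (\ref{equ.akj}). For degree $0$ you take a mildly different route: you argue by counting path components of $\cone(f\times\id_\Sigma)$ (connected when $L_{<k}\neq\varnothing$, a disjoint basepoint added when $L_{<k}=\varnothing$), whereas the paper simply writes $R_0=\real\oplus\redh_0(\cone(f\times\id_\Sigma))$ and applies (\ref{equ.redhconeftstaugeqk}) in degree $0$, where it is in fact valid: Lemma \ref{lem.redhconef} and the Remark following it confirm $\redh_0(\cone f)\cong(\tau_{\geq k}H_*(L))_0$ in both cases, so your suspicion that the isomorphism breaks down in degree $0$ is unfounded. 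Your final reading — that $R_0$ is \emph{unreduced} $H_0$, so the cases are $\real$ versus $\real\oplus H_0(L)\otimes H_0(\Sigma)$ — is the correct one and matches the paper; but the route there contains avoidable slips you then retract (the momentary confusion of $\cone(g)$ with $\cone(f\times\id_\Sigma)$, the claim that reduced $H_0$ of a connected space ``picks up a single $\real$'', and the line $R_0=\redh_0((L\times\Sigma)^+)$ where you mean $H_0$). The substance is right; tightening the degree-$0$ discussion to the paper's one-line identity $H_0=\real\oplus\redh_0$ would remove all of these detours.
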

\begin{proof}
We start by observing that $k$ is independent of $j$.
Now for $j>0$, reduced and unreduced homology coincide, so
\[ R_j =\redh_j (\cone (f\times \id_\Sigma)) \cong  
   \bigoplus_{t=0}^j (\tau_{\geq k} H_* (L))_{j-t} \otimes H_t (\Sigma). \] 
Using (\ref{equ.akj}),
$j-t\geq k$ if and only if $a=j-k\geq t$. Thus
\[ R_j \cong \bigoplus_{t=0}^a H_{j-t} (L) \otimes H_t (\Sigma), \] 
since if $a>j$ and $j<t\leq a,$ then $j-t<0$ so that $H_{j-t} (L)=0$.

In degree $0$, we find
\[ R_0 =\real \oplus \redh_0 (\cone (f\times \id_\Sigma)) 
  \cong \real \oplus (\tau_{\geq k} H_* (L))_0 \otimes H_0 (\Sigma) \]
and $(\tau_{\geq k} H_* (L))_0 =0$ for $k>0$, whereas
$(\tau_{\geq k} H_* (L))_0 = H_0 (L)$ for $k\leq 0$.
\end{proof}

With $s=\dim \Sigma$, we have $s+1 = n-l=c$ and thus according to (\ref{equ.ihtrunc}),
\[ IH^{\olq}_* (c^\circ \Sigma) \cong \tau_{< s-\olq (s+1)} H_* (\Sigma) =
  \tau_{\leq a} H_* (\Sigma), \]
for
\[ s-1-\olq (s+1) = s-1-\olq (c) = n-l-2 -(n-2-\olp (l+1)-j) =a. \]
Consequently by the K\"unneth formula (\ref{equ.ihkunneth}) for intersection homology,
\[
I_j \cong (H_* (L)\otimes IH^{\olq}_* (c^\circ \Sigma))_j 
\cong (H_* (L)\otimes \tau_{\leq a} H_* (\Sigma))_j 
= \bigoplus_{t=0}^a H_{j-t} (L)\otimes H_t (\Sigma).
\]
and
\[
I_{j-1} \cong (H_* (L)\otimes IH^{\olq}_* (c^\circ \Sigma))_{j-1} 
\cong (H_* (L)\otimes \tau_{\leq a} H_* (\Sigma))_{j-1} 
= \bigoplus_{t=0}^a H_{j-1-t} (L)\otimes H_t (\Sigma).
\]
Similarly for $\olq'$,
\[
J_j \cong (H_* (L)\otimes IH^{\olq'}_* (c^\circ \Sigma))_j 
\cong (H_* (L)\otimes \tau_{\leq a-1} H_* (\Sigma))_j 
= \bigoplus_{t=0}^{a-1} H_{j-t} (L)\otimes H_t (\Sigma),
\]
\[
J_{j-1} \cong (H_* (L)\otimes IH^{\olq'}_* (c^\circ \Sigma))_{j-1} 
\cong \bigoplus_{t=0}^{a-1} H_{j-1-t} (L)\otimes H_t (\Sigma),
\]
see Figure \ref{fig.dia}.
\begin{figure} 
\resizebox{8cm}{7cm}{\includegraphics{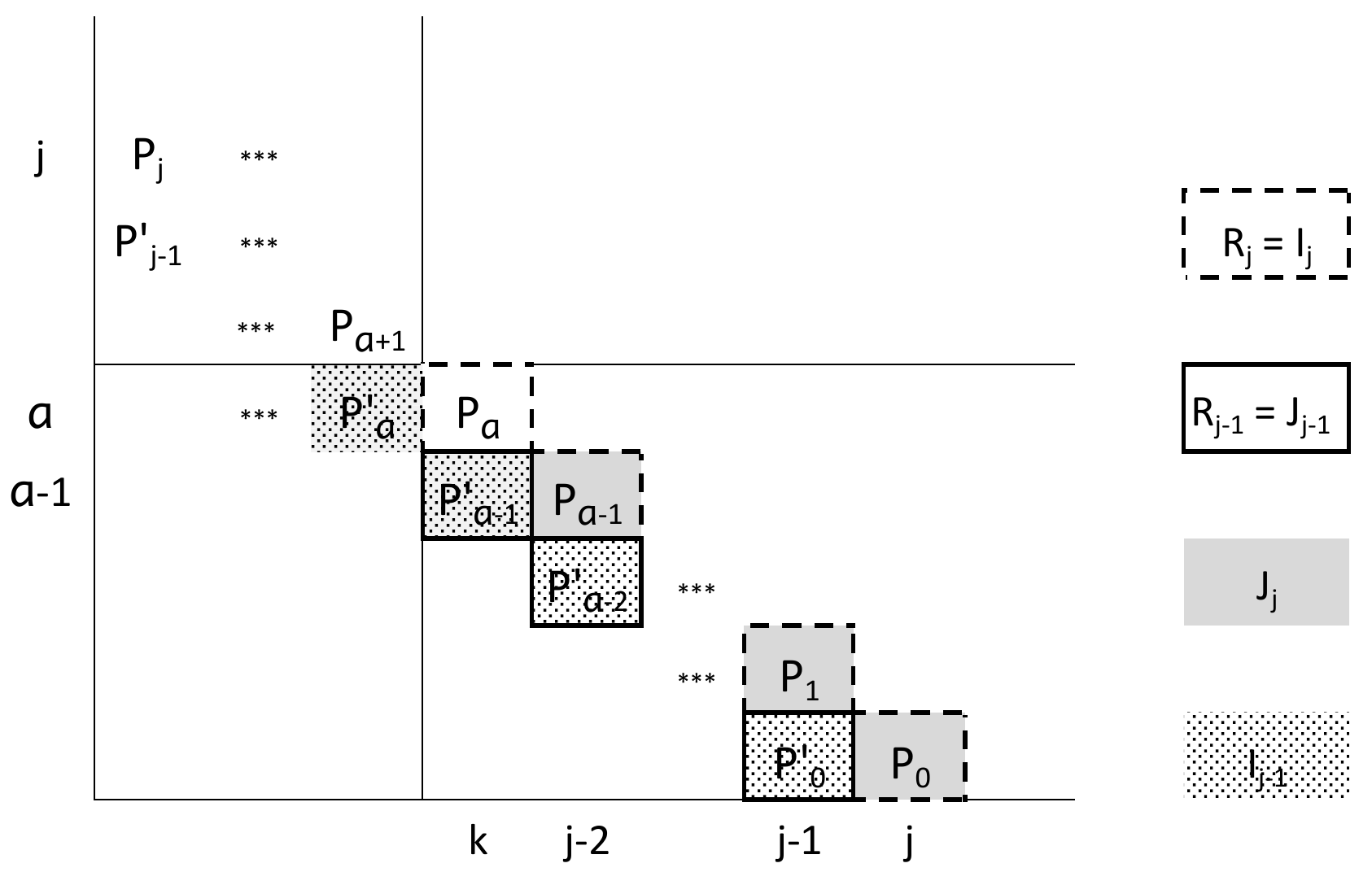}}
\caption{\label{fig.dia} Local K\"unneth factor truncations.}
\end{figure}
This concludes the local calculations of groups.

We commence the determination of various local maps near the singularities.
Let $\gamma^{\loc}_i: I_i \to J_i$ the canonical map.
Using the collar associated to the boundary of the blowup, the open inclusion $L\times \Sigma \times (0,1)
\hookrightarrow M$ induces a map
\[ \beta^T: B_* \longrightarrow T_*, \]
while the open inclusion $L\times \Sigma \times (0,1) \hookrightarrow L\times c^\circ \Sigma$
induces maps 
\[   \beta^I: B_* \longrightarrow I_*,~ \beta^J: B_* \longrightarrow J_* \]
such that
\begin{equation} \label{equ.biglbj}
\xymatrix{
B_* \ar[r]^{\beta^I} \ar[rd]_{\beta^J} & I_* \ar[d]^{\gamma^{\loc}} \\
& J_*
} 
\end{equation}
commutes.
The canonical inclusion $L\times \Sigma \hookrightarrow \cone (f\times \id_\Sigma)$
induces a map
\[ \beta^R: B_* \longrightarrow R_*. \]
Write $P_t = H_{j-t} (L) \otimes H_t (\Sigma)$ and let
$[j,a]: \bigoplus_{t=0}^j P_t \to \bigoplus_{t=0}^a P_t$
be the standard projection if $j>a,$ the identity if $j\leq a$.
The diagram
\[ \xymatrix{
B_j \ar[r]^{\beta^I_j} & I_j \\
 \bigoplus_{t=0}^j P_t \ar[u]^{\times}_{\cong} \ar[r]_{[j,a]} &  \bigoplus_{t=0}^a P_t, 
  \ar[u]^{\cong}_{\times}
} \]
where the vertical isomorphisms are given by the cross product, commutes.
For $j>0$, let $\rho^{\loc}_j: I_j \to R_j$ be the unique isomorphism such that
\[ \xymatrix{
I_j \ar@{..>}[r]^{\rho^{\loc}_j} & R_j \\
 \bigoplus_{t=0}^a P_t \ar[u]^{\times}_{\cong} \ar@{=}[r] &  \bigoplus_{t=0}^a P_t 
  \ar[u]^{\cong}_{\times}
} \]
commutes, using Lemma \ref{lem.computationrj}.
Since
\[ \xymatrix{
B_j \ar[r]^{\beta^R_j} & R_j \\
 \bigoplus_{t=0}^j P_t \ar[u]^{\times}_{\cong} \ar[r]_{[j,a]} &  \bigoplus_{t=0}^a P_t 
  \ar[u]^{\cong}_{\times}
} \]
commutes, we know that
\begin{equation} \label{equ.birj}
 \xymatrix@R=40pt@C=40pt{
B_j \ar[r]^{\beta^I_j} \ar[rd]_{\beta^R_j} & I_j \ar[d]^{\rho^{\loc}_j}_{\cong} \\
& R_j
} 
\end{equation}
commutes as well.
Write $P'_t = H_{j-1-t} (L) \otimes H_t (\Sigma)$ and let
$[j-1,a]: \bigoplus_{t=0}^{j-1} P'_t \to \bigoplus_{t=0}^a P'_t$
be defined as above.
For $j>1$, let $\rho^{\loc}_{j-1}: I_{j-1} \to R_{j-1}$ be the unique epimorphism such that
\[ \xymatrix{
I_{j-1} \ar@{..>>}[r]^{\rho^{\loc}_{j-1}} & R_{j-1} \\
 \bigoplus_{t=0}^a P'_t \ar[u]^{\times}_{\cong} \ar@{->>}[r]^{[a,a-1]} &  
 \bigoplus_{t=0}^{a-1} P'_t 
  \ar[u]^{\cong}_{\times}
} \]
commutes, using Lemma \ref{lem.computationrj}.
Under the cross product, the commutative diagram
\[ \xymatrix{
\bigoplus_{t=0}^{j-1} P'_t \ar[r]^{[j-1,a]} \ar[rd]_{[j-1,a-1]} & \bigoplus_{t=0}^{a} P'_t
   \ar@{->>}[d]^{[a,a-1]} \\
& \bigoplus_{t=0}^{a-1} P'_t
} \]
corresponds to
\begin{equation} \label{equ.birjm1}
\xymatrix{
B_{j-1} \ar[r]^{\beta^I_{j-1}} \ar[rd]_{\beta^R_{j-1}} &
  I_{j-1} \ar@{->>}[d]^{\rho^{\loc}_{j-1}} \\
& R_{j-1},
} 
\end{equation}
which therefore also commutes.

For any $j$, the diagrams
\[ \xymatrix{
I_j \ar[r]^{\gamma^{\loc}_j} & J_j \\
 \bigoplus_{t=0}^a P_t \ar[u]^{\times}_{\cong} \ar[r]_{[a,a-1]} &  \bigoplus_{t=0}^{a-1} P_t 
  \ar[u]^{\cong}_{\times}
} \]
and
\[ \xymatrix{
I_{j-1} \ar[r]^{\gamma^{\loc}_{j-1}} & J_{j-1} \\
 \bigoplus_{t=0}^a P'_t \ar[u]^{\times}_{\cong} \ar[r]_{[a,a-1]} &  \bigoplus_{t=0}^{a-1} P'_t 
  \ar[u]^{\cong}_{\times}
} \]
commute, showing that both $\gamma^{\loc}_j$ and $\gamma^{\loc}_{j-1}$ are surjective.
For $j>1$, let $R_{j-1} \to J_{j-1}$ be the unique isomorphism such that
\[ \xymatrix{
R_{j-1} \ar[r]^{\cong} & J_{j-1} \\
 \bigoplus_{t=0}^{a-1} P'_t \ar[u]^{\times}_{\cong} \ar@{=}[r] &  
 \bigoplus_{t=0}^{a-1} P'_t 
  \ar[u]^{\cong}_{\times}
} \]
commutes. Then, since $\rho^{\loc}_{j-1}$ and $\gamma^{\loc}_{j-1}$ are both under the
K\"unneth isomorphism given by
the projection $[a,a-1],$ the diagram
\begin{equation} \label{equ.ijr}
\xymatrix{
I_{j-1} \ar@{->>}[rr]^{\gamma^{\loc}_{j-1}} \ar@{->>}[rd]_{\rho^{\loc}_{j-1}} & & J_{j-1} \\
& R_{j-1} \ar[ru]_\cong &
} 
\end{equation}
commutes ($j>1$).
\begin{lemma} \label{lem.kerbjiskerbr}
When $j>1$, the identity $\ker \beta^J_{j-1} = \ker \beta^R_{j-1}$ holds in $B_{j-1}$.
\end{lemma}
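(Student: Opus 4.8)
The plan is to obtain the lemma by a short diagram chase, using only the local maps already constructed; no new (co)homology computation is needed. The guiding observation is that, in degree $j-1$, the map $\beta^J_{j-1}$ factors as an isomorphism composed with $\beta^R_{j-1}$, so the two have the same kernel.

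First I would read off from the commuting triangle (\ref{equ.biglbj}), specialised to degree $j-1$, the identity $\gamma^{\loc}_{j-1}\circ\beta^I_{j-1}=\beta^J_{j-1}$. Next, since $j>1$ the epimorphism $\rho^{\loc}_{j-1}\colon I_{j-1}\to R_{j-1}$ is defined, and the commuting triangle (\ref{equ.birjm1}) gives $\rho^{\loc}_{j-1}\circ\beta^I_{j-1}=\beta^R_{j-1}$. Finally, again because $j>1$, the commuting triangle (\ref{equ.ijr}) supplies an isomorphism $\iota\colon R_{j-1}\xrightarrow{\ \cong\ }J_{j-1}$ with $\gamma^{\loc}_{j-1}=\iota\circ\rho^{\loc}_{j-1}$.

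Composing these three identities yields
\[
 \beta^J_{j-1}=\gamma^{\loc}_{j-1}\circ\beta^I_{j-1}=\iota\circ\rho^{\loc}_{j-1}\circ\beta^I_{j-1}=\iota\circ\beta^R_{j-1},
\]
so $\beta^J_{j-1}$ and $\beta^R_{j-1}$ differ only by the isomorphism $\iota$ on the target. As $\iota$ is injective, $\ker\beta^J_{j-1}=\ker(\iota\circ\beta^R_{j-1})=\ker\beta^R_{j-1}$, which is the assertion.

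The only point requiring care is the hypothesis $j>1$: it is precisely what makes both $\rho^{\loc}_{j-1}$ and the isomorphism of (\ref{equ.ijr}) available, since those are constructed in degree $j-1$ only under that assumption — in degree $0$ (that is, for $j=1$) the computation of $R_0$ in Lemma \ref{lem.computationrj} carries an extra summand and the identification breaks. I expect no further obstacle: surjectivity of $\beta^I_{j-1}$ is not used, and one need not unwind the underlying K\"unneth identifications, since the factorisation $\beta^J_{j-1}=\iota\circ\beta^R_{j-1}$ is all that enters the argument.
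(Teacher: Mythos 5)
Your proposal is correct and follows the paper's own argument exactly: it invokes the same three commuting triangles (\ref{equ.biglbj}), (\ref{equ.birjm1}) and (\ref{equ.ijr}) to factor $\beta^J_{j-1}$ as an isomorphism composed with $\beta^R_{j-1}$, whence the kernels coincide. Nothing to add.
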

\begin{proof}
By diagram (\ref{equ.ijr}), in the case $j>1$ there is an isomorphism
$\nu: R_{j-1} \to J_{j-1}$ such that $\nu \rho^{\loc}_{j-1} = \gamma^{\loc}_{j-1}$.
According to diagram (\ref{equ.birjm1}), $\beta^R_{j-1} = \rho^{\loc}_{j-1} \beta^I_{j-1}$.
Furthemore, $\beta^J_{j-1} = \gamma^{\loc}_{j-1} \beta^I_{j-1}$ by diagram (\ref{equ.biglbj}).
Hence $\beta^R_{j-1} (x) = \rho^{\loc}_{j-1} \beta^I_{j-1} (x)$ vanishes if and only if
\[ \nu \rho^{\loc}_{j-1} \beta^I_{j-1} (x) = \gamma^{\loc}_{j-1} \beta^I_{j-1} (x)=\beta^J_{j-1} (x) \]
vanishes.
\end{proof}
This concludes our investigation of local maps.\\

We move on to global arguments.
The open cover $CT(X)=M \cup (L\times c^\circ \Sigma)$ with 
$M\cap L\times c^\circ \Sigma = L\times \Sigma \times (0,1)\simeq L\times \Sigma$
yields a Mayer-Vietoris sequence for intersection homology
\[ B_j \stackrel{\beta_j}{\longrightarrow} T_j \oplus I_j \stackrel{\theta}{\longrightarrow} IH^{\olq}_j (CT(X))
 \stackrel{\partial_*}{\longrightarrow} B_{j-1} 
  \stackrel{\beta_{j-1}}{\longrightarrow} T_{j-1} \oplus I_{j-1}, \]
see (\ref{equ.ihmvgeneric}).
Similarly, there is such a sequence for perversity $\olq'$:
\[ B_j \stackrel{\beta'_j}{\longrightarrow} T_j \oplus J_j 
  \stackrel{\theta'}{\longrightarrow} IH^{\olq'}_j (CT(X))
 \stackrel{\partial_*}{\longrightarrow} B_{j-1} 
 \stackrel{\beta'_{j-1}}{\longrightarrow} T_{j-1} \oplus J_{j-1}. \]
The canonical map from perversity $\olq$ to $\olq'$ induces a commutative diagram
\begin{equation} \label{equ.ihqihqprime}
\xymatrix{
B_j \ar[r]^{\beta_j} \ar@{=}[d] & T_j \oplus I_j \ar[r]^\theta \ar[d]^{\id \oplus \gamma^{\loc}_j} 
  &  IH^{\olq}_j (CT(X))
 \ar[r]^{\partial_*} \ar[d]^\gamma &  
 B_{j-1} \ar[r]^{\beta_{j-1}} \ar@{=}[d] &  T_{j-1} \oplus I_{j-1} \ar[d]^{\id \oplus \gamma^{\loc}_{j-1}} \\
B_j \ar[r]^{\beta'_j} & T_j \oplus J_j \ar[r]^{\theta'} &  IH^{\olq'}_j (CT(X))
 \ar[r]^{\partial_*} &  B_{j-1} \ar[r]^{\beta'_{j-1}} &  T_{j-1} \oplus J_{j-1}. 
} 
\end{equation}
The subset 
\[ U= ((0,1] \times L\times \Sigma) \cup_{\{ 1 \} \times L\times \Sigma} \cone (f\times \id_\Sigma)
 \simeq \cone (f\times \id_\Sigma) \]
is open in the intersection space $\ip X$.
The open cover $I^{\olp} X=M \cup U$ with 
$M\cap U= L\times \Sigma \times (0,1)\simeq L\times \Sigma$
yields a Mayer-Vietoris sequence
\[ B_j \stackrel{\beta''_j}{\longrightarrow} T_j \oplus R_j 
 \stackrel{\theta''}{\longrightarrow} HI^{\olp}_j (X)
 \stackrel{\partial_*}{\longrightarrow} B_{j-1} 
 \stackrel{\beta''_{j-1}}{\longrightarrow} T_{j-1} \oplus R_{j-1}. \]
This is the standard Mayer-Vietoris sequence for singular homology of topological spaces.

We shall prove Theorem \ref{ihhihom} first for all $j>1$.
Using the commutative diagrams (\ref{equ.birj}) and (\ref{equ.birjm1}), we obtain the following commutative
diagram with exact rows:
\begin{equation} \label{equ.MVihtohi}
\xymatrix{
B_j \ar[r]^{\beta_j} \ar@{=}[d] & T_j \oplus I_j \ar[r]^\theta \ar[d]^{\id \oplus \rho^{\loc}_j}_\cong 
  &  IH^{\olq}_j (CT(X))
 \ar[r]^{\partial_*} &  B_{j-1} \ar[r]^{\beta_{j-1}} \ar@{=}[d] &  
  T_{j-1} \oplus I_{j-1} \ar@{->>}[d]^{\id \oplus \rho^{\loc}_{j-1}} \\
B_j \ar[r]^{\beta''_j} & T_j \oplus R_j \ar[r]^{\theta''} &  HI^{\olp}_j (X)
 \ar[r]^{\partial_*} &  B_{j-1} \ar[r]^{\beta''_{j-1}} &  T_{j-1} \oplus R_{j-1}. 
} 
\end{equation}
There exists a (nonunique) map 
$\rho:  IH^{\olq}_j (CT(X)) \to HI^{\olp}_j (X)$ which fills in diagram 
(\ref{equ.MVihtohi}) commutatively, see e.g. \cite[Lemma 2.46]{Ba-HI}.
By the four-lemma, $\rho$ is a monomorphism.
This shows that $HI^{\olp}_j (X)$ contains  $IH^{\olq}_j (CT(X))$ as a subspace.
Hence the theorem will follow from:
\begin{prop}  \label{prop.isocokrhocokgam}
If $j>1$, there is an isomorphism $\cok \rho \cong \cok \gamma$.
\end{prop}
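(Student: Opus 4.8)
The plan is to split each of the three Mayer--Vietoris sequences into a short exact sequence and then compare $\cok\rho$ and $\cok\gamma$ by the snake lemma.

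First I would record that the $\olq$--Mayer--Vietoris sequence yields the exact sequence
\[ 0\longrightarrow (T_j\oplus I_j)/\im\beta_j\longrightarrow IH^{\olq}_j(CT(X))\longrightarrow \ker\beta_{j-1}\longrightarrow 0, \]
and likewise short exact sequences for $IH^{\olq'}_j(CT(X))$ (with $J_*,\beta'$ in place of $I_*,\beta$) and for $HI^{\olp}_j(X)$ (with $R_*,\beta''$ in place of $I_*,\beta$). The vertical maps of the five--term exact sequences in diagrams (\ref{equ.ihqihqprime}) and (\ref{equ.MVihtohi}) induce maps between these short exact sequences: on the middle terms they are $\gamma$ and $\rho$; on the left terms they are the maps $\bar\gamma^{\loc},\bar\rho^{\loc}$ induced by $\id\oplus\gamma^{\loc}_j$ and $\id\oplus\rho^{\loc}_j$; and on the right terms they are the inclusions $\ker\beta_{j-1}\hookrightarrow\ker\beta'_{j-1}$ and $\ker\beta_{j-1}\hookrightarrow\ker\beta''_{j-1}$, which are well defined because $\beta'_{j-1}=(\id\oplus\gamma^{\loc}_{j-1})\beta_{j-1}$ and $\beta''_{j-1}=(\id\oplus\rho^{\loc}_{j-1})\beta_{j-1}$ by diagrams (\ref{equ.biglbj}) and (\ref{equ.birjm1}).

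Next I would feed in the constructions of the local maps. Since $\rho^{\loc}_j$ is an isomorphism (for $j>0$) and $(\id\oplus\rho^{\loc}_j)\beta_j=\beta''_j$ by diagram (\ref{equ.birj}), the left-hand map $\bar\rho^{\loc}$ is an isomorphism; as $\rho$ is already known to be a monomorphism, the snake lemma then gives $\cok\rho\cong\ker\beta''_{j-1}/\ker\beta_{j-1}$. Likewise $\gamma^{\loc}_j$ is surjective and $(\id\oplus\gamma^{\loc}_j)\beta_j=\beta'_j$ by diagram (\ref{equ.biglbj}), so $\bar\gamma^{\loc}$ is surjective, $\cok\bar\gamma^{\loc}=0$, and the snake lemma gives $\cok\gamma\cong\ker\beta'_{j-1}/\ker\beta_{j-1}$. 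Finally I would observe that $\ker\beta'_{j-1}=\ker\beta^T_{j-1}\cap\ker\beta^J_{j-1}$ and $\ker\beta''_{j-1}=\ker\beta^T_{j-1}\cap\ker\beta^R_{j-1}$, so that Lemma \ref{lem.kerbjiskerbr} --- valid precisely for $j>1$ --- gives $\ker\beta'_{j-1}=\ker\beta''_{j-1}$, whence $\cok\gamma\cong\cok\rho$.

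The step I expect to carry all the weight is the invocation of Lemma \ref{lem.kerbjiskerbr}: the local truncation factors $R_{j-1}$ and $J_{j-1}$ are a priori unrelated vector spaces --- one assembled from a Moore approximation of $L$, the other from a cone truncation of $\Sigma$ --- and it is only the fact that $\rho^{\loc}_{j-1}$ and $\gamma^{\loc}_{j-1}$ are both realized, through the K\"unneth identifications, by the single projection $[a,a-1]$ that forces their kernels inside $B_{j-1}$ to coincide. Everything else is routine bookkeeping with the snake lemma; the one thing to stay alert to is that field coefficients are essential, since they underlie the K\"unneth isomorphisms used for the local groups $I_j,J_j,R_j$ and hence the identifications of $\bar\rho^{\loc}$ and $\bar\gamma^{\loc}$ above.
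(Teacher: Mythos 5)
Your proposal is correct and follows essentially the same route as the paper: both extract the short exact sequences $0\to\im\theta\to IH^{\olq}_j\to\ker\beta_{j-1}\to 0$ (your left term $(T_j\oplus I_j)/\im\beta_j$ is canonically $\im\theta$) from the Mayer--Vietoris sequences, apply the snake lemma, kill the left cokernels using the surjectivity of $\gamma^{\loc}_j$ and the bijectivity of $\rho^{\loc}_j$, and reduce to $\cok\gamma\cong\ker\beta'_{j-1}/\ker\beta_{j-1}$ and $\cok\rho\cong\ker\beta''_{j-1}/\ker\beta_{j-1}$, which coincide by Lemma \ref{lem.kerbjiskerbr}. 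You have also correctly identified that lemma as the step carrying the real content.
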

\begin{proof}
Let us determine the cokernel of $\gamma$.
Let $\gamma_B$ be the restriction of $\id: B_{j-1} \to B_{j-1}$ to
$\gamma_B: \ker \beta_{j-1} \to \ker \beta'_{j-1},$ i.e. $\gamma_B$ is the inclusion
$\ker \beta_{j-1} \subset \ker \beta'_{j-1}$.
Let $\gamma_\theta: \im \theta \to \im \theta'$ be obtained by restricting $\gamma$.
Applying the snake lemma to the commutative diagram
\[ \xymatrix{
0 \ar[r] & \im \theta \ar[r] \ar[d]^{\gamma_\theta} &
 IH^{\olq}_j (CT(X)) \ar[r]^{\partial_*} \ar[d]_\gamma & 
  \ker \beta_{j-1} \ar[r] \ar[d]_{\gamma_B} & 0 \\
0 \ar[r] & \im \theta' \ar[r]  &
 IH^{\olq'}_j (CT(X)) \ar[r]^{\partial_*} & 
  \ker \beta'_{j-1} \ar[r]  & 0
} \]
yields an exact sequence
\[ 0 \to \ker \gamma_\theta \to \ker \gamma \to \ker \gamma_B \to
 \cok \gamma_\theta \to \cok \gamma \to \cok \gamma_B \to 0. \]
Since $\ker \gamma_B =0,$ we can extract the short exact sequence
\[ 0 \to \cok \gamma_\theta \to \cok \gamma \to \cok \gamma_B \to 0. \]
As $\gamma^{\loc}_j$ is surjective, the diagram
\[ \xymatrix{
T_j \oplus I_j \ar@{->>}[r]^\theta \ar@{->>}[d]_{\id \oplus \gamma^{\loc}_j} & \im \theta \ar[d]^{\gamma_\theta} \\
T_j \oplus J_j \ar@{->>}[r]^{\theta'} & \im \theta' 
} \]
shows that $\gamma_\theta$ is also surjective and thus $\cok \gamma_\theta =0$.
Therefore, we obtain an isomorphism
\[ \cok \gamma \stackrel{\cong}{\longrightarrow}
 \cok \gamma_B = \frac{\ker \beta'_{j-1}}{\ker \beta_{j-1}}
 = \frac{\ker \beta^T_{j-1} \cap \ker \beta^J_{j-1}}{\ker \beta^T_{j-1} \cap \ker \beta^I_{j-1}},
\]
since $\beta_{j-1} = (\beta^T_{j-1}, \beta^I_{j-1})$ and
$\beta'_{j-1} = (\beta^T_{j-1}, \beta^J_{j-1}).$ 

In a similar manner, we determine the cokernel of $\rho$.
Let $\rho_B$ be the restriction of $\id: B_{j-1} \to B_{j-1}$ to
$\rho_B: \ker \beta_{j-1} \to \ker \beta''_{j-1},$ i.e. $\rho_B$ is the inclusion
$\ker \beta_{j-1} \subset \ker \beta''_{j-1}$.
Let $\rho_\theta: \im \theta \to \im \theta''$ be obtained by restricting $\rho$.
Applying the snake lemma to the commutative diagram
\[ \xymatrix{
0 \ar[r] & \im \theta \ar[r] \ar[d]^{\rho_\theta} &
 IH^{\olq}_j (CT(X)) \ar[r]^{\partial_*} \ar[d]_\rho & 
  \ker \beta_{j-1} \ar[r] \ar[d]_{\rho_B} & 0 \\
0 \ar[r] & \im \theta'' \ar[r]  &
 HI^{\olp}_j (X) \ar[r]^{\partial_*} & 
  \ker \beta''_{j-1} \ar[r]  & 0
} \]
yields an exact sequence
\[ 0 \to \ker \rho_\theta \to \ker \rho \to \ker \rho_B \to
 \cok \rho_\theta \to \cok \rho \to \cok \rho_B \to 0. \]
Since $\ker \rho_B =0,$ we can extract the short exact sequence
\[ 0 \to \cok \rho_\theta \to \cok \rho \to \cok \rho_B \to 0. \]
As $\rho^{\loc}_j$ is an isomorphism, the diagram
\[ \xymatrix{
T_j \oplus I_j \ar@{->>}[r]^\theta \ar[d]_{\id \oplus \rho^{\loc}_j}^{\cong} & \im \theta \ar[d]^{\rho_\theta} \\
T_j \oplus R_j \ar@{->>}[r]^{\theta''} & \im \theta'' 
} \]
shows that $\rho_\theta$ is also surjective and thus $\cok \rho_\theta =0$.
Therefore, we obtain an isomorphism
\[ \cok \rho \stackrel{\cong}{\longrightarrow}
 \cok \rho_B = \frac{\ker \beta''_{j-1}}{\ker \beta_{j-1}}.
\]
As $\beta''_{j-1} = (\beta^T_{j-1}, \beta^R_{j-1}),$ we have
\[ \ker \beta''_{j-1} = \ker \beta^T_{j-1} \cap \ker \beta^R_{j-1}. \]
By Lemma \ref{lem.kerbjiskerbr}, $\ker \beta^J_{j-1} = \ker \beta^R_{j-1}$ and hence
\[ \cok \rho \cong \frac{\ker \beta^T_{j-1} \cap \ker \beta^R_{j-1}}{\ker \beta^T_{j-1} \cap \ker \beta^I_{j-1}}
 =\frac{\ker \beta^T_{j-1} \cap \ker \beta^J_{j-1}}{\ker \beta^T_{j-1} \cap \ker \beta^I_{j-1}}
 \cong \cok \gamma. \]
\end{proof}

Now assume that $j=1$.
We need to consider the subcases $k\leq 0$, $k=1$ and $k>1$ separately.
We start with $k\leq 0$. In principle, we shall again use a diagram of the
shape (\ref{equ.MVihtohi}), but the definition of $\rho^{\loc}_{j-1}$ changes.
In this case, $L_{<k}=\varnothing,$ 
$\cone (f\times \id_\Sigma)=(L\times \Sigma)^+$,
$R_0 = \real \oplus H_0 (L)\otimes H_0 (\Sigma),$
$a\geq 1$ and $I_0 \cong J_0 \cong H_0 (L)\otimes H_0 (\Sigma)$.
The maps of the commutative diagram
\[ \xymatrix@C=50pt@R=50pt{
B_0 \ar[r]^{\beta^I_0}_\cong \ar[rd]_{\beta^J_0}^\cong & 
    I_0 \ar[d]^{\gamma^{\loc}_0}_\cong \\
& J_0
}  \]
are all isomorphisms.
The map $L\times \Sigma \hookrightarrow \cone (f\times \id_\Sigma),$
which induces $\beta^R,$ is the inclusion 
$L\times \Sigma \hookrightarrow (L\times \Sigma)^+$ and thus 
$\beta^R_0: B_0 \to R_0$ is the standard inclusion
$H_0 (L)\otimes H_0 (\Sigma) \to \real \oplus H_0 (L)\otimes H_0 (\Sigma).$
Let $\rho^{\loc}_{j-1} = \rho^{\loc}_0: I_0 \to R_0$ be the unique monomorphism such that
\[ \xymatrix{
B_{0} \ar[r]^{\beta^I_{0}}_\cong \ar@{^{(}->}[rd]_{\beta^R_{0}} &
  I_{0} \ar@{..>}[d]^{\rho^{\loc}_{0}} \\
& R_{0}
} \]
commutes. (Note that for $j\geq 2,$ $\rho^{\loc}_{j-1}$ was known to be surjective,
which is not true here.) Diagram (\ref{equ.MVihtohi}) becomes
\[
\xymatrix{
B_1 \ar[r]^{\beta_1} \ar@{=}[d] & T_1 \oplus I_1 \ar[r]^\theta \ar[d]^{\id \oplus \rho^{\loc}_1}_\cong 
  &  IH^{\olq}_1 (CT(X))
 \ar[r]^{\partial_*} &  B_{0} \ar[r]^{\beta_{0}} \ar@{=}[d] &  
  T_{0} \oplus I_{0} \ar@{^{(}->}[d]^{\id \oplus \rho^{\loc}_{0}} \\
B_1 \ar[r]^{\beta''_1} & T_1 \oplus R_1 \ar[r]^{\theta''} &  HI^{\olp}_1 (X)
 \ar[r]^{\partial_*} &  B_{0} \ar[r]^{\beta''_{0}} &  T_{0} \oplus R_{0}. 
} 
\] 
There exists a (nonunique) map 
$\rho:  IH^{\olq}_1 (CT(X)) \to HI^{\olp}_1 (X)$ which fills in the diagram 
commutatively.
By the five-lemma, $\rho$ is an isomorphism.
To establish the theorem, it remains to be shown that $\cok \gamma$ vanishes.
Since diagram (\ref{equ.ihqihqprime}) is available for any $j$, the argument
given in the proof of Proposition \ref{prop.isocokrhocokgam} still applies to give an isomorphism
\[ \cok \gamma \cong \frac{\ker \beta'_{0}}{\ker \beta_{0}} =
\frac{\ker \beta^T_{0} \cap \ker \beta^J_{0}}{\ker \beta^T_{0} \cap \ker \beta^I_{0}}.
\]
Since $\beta^I_0$ and $\beta^J_0$ are isomorphisms, we deduce that $\cok \gamma =0$,
as was to be shown. This concludes the case $k\leq 0$. \\

We proceed to the case $k=1$ (and $j=1$).
By Lemma \ref{lem.computationrj}, $R_0 =\real,$ generated by the cone vertex.
We have $a=0$ and thus still $I_0 = H_0 (L)\otimes H_0 (\Sigma),$ but
$J_0 =0$. Therefore, $\ker \beta^J_0 =B_0$.
The map $\beta^R_0: B_0 \to R_0$ can be identified with the
augmentation map $\epsilon: B_0 \to \real$, a surjection.
Let $\rho^{\loc}_{j-1} = \rho^{\loc}_0: I_0 \to R_0$ be the unique epimorphism such that
\[ \xymatrix{
B_{0} \ar[r]^{\beta^I_{0}}_\cong \ar@{->>}[rd]_{\epsilon =\beta^R_{0}} &
  I_{0} \ar@{..>}[d]^{\rho^{\loc}_{0}} \\
& R_{0} =\real
} \]
commutes.
There exists a map $\rho$ filling in diagram (\ref{equ.MVihtohi}) commutatively.
Such a $\rho$ is then injective.
Using arguments from the proof of Proposition \ref{prop.isocokrhocokgam},
we have
\[ \cok \gamma \cong 
\frac{\ker \beta^T_{0} \cap \ker \beta^J_{0}}{\ker \beta^T_{0} \cap \ker \beta^I_{0}}
 = \ker \beta^T_0 \cap B_0 = \ker \beta^T_0
\]
and
\[ \cok \rho \cong 
\ker \beta^T_{0} \cap \ker \beta^R_{0}
 = \ker \beta^T_0 \cap \ker \epsilon.
\]
We recall from elementary algebraic topology:
\begin{lemma} \label{lem.epsheps}
If $A$ and $B$ are topological spaces and $h:A\to B$ a continuous map,
then the diagram
\[ \xymatrix{
H_0 (A) \ar[r]^{h_*} \ar[rd]_{\epsilon} & H_0 (B) \ar[d]^{\epsilon} \\
& \real
} \]
commutes. In particular, $\ker h_* \subset \ker (\epsilon: H_0 (A)\to \real)$.
\end{lemma}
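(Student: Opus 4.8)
The plan is to deduce the statement from functoriality of $H_0$, using the standard identification of the augmentation with the homomorphism induced by collapsing to a point. Identifying $H_0(\pt)\cong\real$, the augmentation $\epsilon\colon H_0(X)\to\real$ is nothing but $c_{X*}$, where $c_X\colon X\to\pt$ denotes the constant map. First I would observe that $c_B\circ h=c_A$ as maps $A\to\pt$, since there is only one map to a point; applying $H_0(-)$ then yields $\epsilon\circ h_*=c_{B*}\circ h_*=(c_B\circ h)_*=c_{A*}=\epsilon$, which is exactly the asserted commutativity of the triangle.

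Alternatively, and perhaps most transparently, one can argue at the chain level: the augmentation $\epsilon\colon C_0(X)\to\real$ sends each singular $0$-simplex to $1$ and is extended linearly, and the induced chain map $h_\#$ carries a singular $0$-simplex $\sigma$ of $A$ to the singular $0$-simplex $h\circ\sigma$ of $B$. Hence $\epsilon(h_\#\sigma)=1=\epsilon(\sigma)$, so $\epsilon\circ h_\#=\epsilon$ already on $C_0(A)$, and this identity descends to $H_0$. The ``in particular'' clause is then immediate: for $x\in\ker h_*$ we get $\epsilon(x)=\epsilon(h_*x)=\epsilon(0)=0$, so $x\in\ker(\epsilon\colon H_0(A)\to\real)$.

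There is no genuine obstacle here; the only point requiring a modicum of care is to make sure the normalisation of $\epsilon$ used in the lemma (each point contributing $1$, i.e.\ $\epsilon$ really is the map induced by $X\to\pt$ and not some multiple of it) is the same augmentation that appears in the Mayer--Vietoris computations above, so that the triangle commutes on the nose rather than up to a unit scalar.
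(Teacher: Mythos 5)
Your argument is correct: both the functoriality argument via the collapse map to a point and the chain-level computation $\epsilon(h_\#\sigma)=1=\epsilon(\sigma)$ are the standard proofs, and the ``in particular'' clause follows immediately. The paper itself states this lemma as a recollection from elementary algebraic topology and gives no proof, so your write-up simply supplies the expected standard argument.
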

Applying this lemma to $h_* = \beta^T_0$, we have 
$\ker \beta^T_0 \subset \ker \epsilon$ and thus
$\cok \rho \cong \ker \beta^T_0 \cong \cok \gamma$. This concludes the
proof in the case $k=1$. \\

When $k>1$ (and $j=1$), then $R_0 =\real$ (Lemma \ref{lem.computationrj}),
$a<0,$ and $I_0 = J_0 =0$. As in the case $k=1$, the map
$\beta^R_0: B_0 \to R_0$ can be identified with the
augmentation epimorphism $\epsilon: B_0 \to \real$, but
this time, there does not exist a map
$\rho^{\loc}_0$ such that $\rho^{\loc}_0 \beta^I_0 = \beta^R_0$.
We must therefore argue differently.
By exactness and since $\beta^I_0 =0,$ we have
\[ \im (\partial_*: IH^{\olq}_1 (CT(X))\to B_0) = \ker (\beta^T_0, \beta^I_0) 
  = \ker \beta^T_0. \]
Also,
\[ \im (\partial_*: HI^{\olp}_1 (X)\to B_0) = \ker (\beta^T_0, \beta^R_0) 
  = \ker \beta^T_0 \cap \ker \epsilon. \]
By Lemma \ref{lem.epsheps}, $\ker \beta^T_0 \subset \ker \epsilon$ and hence
\[ \im (\partial_*: IH^{\olq}_1 (CT(X))\to B_0) =
   \im (\partial_*: HI^{\olp}_1 (X)\to B_0). \]
For the diagram
\[
\xymatrix{
B_1 \ar[r]^{\beta_1}  & T_1 \oplus I_1 \ar[r]^\theta  
  &  IH^{\olq}_1 (CT(X))
 \ar[r]^{\partial_*} &  \im \partial_* \ar[r]  & 0 \\
B_1 \ar[r]^{\beta''_1} \ar@{=}[u] & T_1 \oplus R_1 \ar[r]^{\theta''} 
  \ar[u]_{\id \oplus (\rho^{\loc}_1)^{-1}}^\cong
&  HI^{\olp}_1 (X)
 \ar[r]^{\partial_*} &  \im \partial_* \ar[r] \ar@{=}[u] &  0, \ar@{=}[u],
} 
\] 
there exists a (nonunique) map 
$\rho':  HI^{\olp}_1 (X) \to IH^{\olq}_1 (CT(X))$ which fills in the diagram 
commutatively.
By the five-lemma, $\rho'$ is an isomorphism. The cokernel of $\gamma$,
\[ \cok \gamma \cong 
\frac{\ker \beta^T_{0} \cap \ker \beta^J_{0}}{\ker \beta^T_{0} \cap \ker \beta^I_{0}} =
 \frac{\ker \beta^T_{0} \cap B_0}{\ker \beta^T_{0} \cap B_0} =0,
\]
vanishes and thus the theorem holds in this case as well. This finishes the proof
for $j=1$. \\

It remains to establish Theorem \ref{ihhihom} for $j=0$. 
We shall write $\widetilde{B}_*,$ $\widetilde{T}_*,$
$\widetilde{R}_*$ for the reduced homology groups.
We have $a=-k$,
\[ I_0 \cong \begin{cases}
  H_0 (L) \otimes H_0 (\Sigma),& k\leq 0 \\
  0,& k>0
 \end{cases}
\]
and by Lemma \ref{lem.computationrj},
\[ \widetilde{R}_0 \cong \begin{cases}
  H_0 (L) \otimes H_0 (\Sigma),& k\leq 0 \\
  0,& k>0.
 \end{cases}
\]
Thus $I_0$ and $\widetilde{R}_0$ are abstractly isomorphic.
 Recall that for a topological space $A$, the reduced homology $\redh_0 (A)$
is the kernel of the augmentation map $\epsilon: H_0 (A)\to \real$, so that there
is a short exact sequence
\[ 0\longrightarrow \redh_0 (A) \stackrel{\iota}{\longrightarrow} H_0 (A)
 \stackrel{\epsilon}{\longrightarrow} \real \longrightarrow 0. \]
Let $\widetilde{\beta}^R_0: \widetilde{B}_0 \to \widetilde{R}_0$ be the
map induced by $\beta^R_0$ between the kernels of the respective augmentation maps.
If $k\leq 0$, then $L_{<k} =\varnothing$ and the exact sequence
\[ H_0 (L_{<k} \times \Sigma) \longrightarrow
  H_0 (L\times \Sigma) \longrightarrow
 \widetilde{R}_0 \longrightarrow 0 \]
shows that $B_0 \to \widetilde{R}_0$ is an isomorphism.
Since the composition 
\[ \widetilde{B}_0 \stackrel{\iota}{\hookrightarrow} B_0 \stackrel{\cong}{\longrightarrow}
  \widetilde{R}_0 \]
is $\widetilde{\beta}^R_0$,
we deduce that $\widetilde{\beta}^R_0$ is injective for $k\leq 0$.
Inverting $\widetilde{\beta}^R_0$ on its image and composing with $\beta^I_0$,
then extending to an isomorphism using $\dim I_0 = \dim \widetilde{R}_0$, we obtain
an isomorphism $\kappa: \widetilde{R}_0 \to I_0$ such that
\begin{equation}  \label{equ.kappa}
\xymatrix{
\widetilde{B}_0 \ar[r]^{\widetilde{\beta}^R_0} \ar@{^{(}->}[d]_{\iota} &
  \widetilde{R}_0 \ar@{..>}[d]^\kappa \\
B_0 \ar[r]_{\beta^I_0} & I_0
} 
\end{equation}
commutes. When $k>0$, let $\kappa: \widetilde{R}_0 \to I_0$ be the zero map
(an isomorphism). Then diagram (\ref{equ.kappa}) commutes also in this case.
As for the above open cover $\ip X = M\cup U,$ the intersection
$M\cap U = L\times \Sigma \times (0,1)$ is not empty, so there is a
Mayer-Vietoris sequence on reduced homology:
\[ \widetilde{B}_0 \stackrel{\beta''_0}{\longrightarrow} 
  \widetilde{T}_0 \oplus \widetilde{R}_0 
 \stackrel{\theta''}{\longrightarrow} \redh I^{\olp}_0 (X)
 \longrightarrow 0. \]
Using $\kappa$, we get the following commutative diagram with exact rows:
\[ \xymatrix{
 \widetilde{B}_0 \ar[r]^{\widetilde{\beta}''_0} \ar@{^{(}->}[d]_{\iota} & 
  \widetilde{T}_0 \oplus \widetilde{R}_0 
   \ar[r]^{\theta''} \ar@{^{(}->}[d]_{\iota \oplus \kappa} &
 \redh I^{\olp}_0 (X) \ar[r] & 0 \\
 B_0 \ar[r]^{\beta_0} & 
  T_0 \oplus I_0 
   \ar[r]^{\theta} &
  IH^{\olq}_0 (CT(X)) \ar[r] & 0, \\
} \] 
from which we infer that
\[  \redh I^{\olp}_0 (X) \cong \cok \widetilde{\beta}''_0,~
  IH^{\olq}_0 (CT(X)) \cong \cok \beta_0. \]
Let $T_0 \oplus I_0 \to \real$ be the composition of the standard projection
$T_0 \oplus I_0 \to T_0$ with the augmentation $\epsilon: T_0 \to \real$.
Applying the snake lemma to the commutative diagram
\[ \xymatrix{
0 \ar[r] &
\widetilde{B}_0 \ar[r]^{\iota} \ar[d]^{\widetilde{\beta}''_0} &
B_0 \ar[r]^\epsilon \ar[d]^{\beta_0} &
\real \ar[r] \ar@{=}[d] & 0 \\
0 \ar[r] &
\widetilde{T}_0 \oplus \widetilde{R}_0 \ar[r]^{\iota \oplus \kappa}  &
T_0 \oplus I_0 \ar[r]  &
\real \ar[r] & 0 \\
} \]
we arrive at the exact sequence
\[ 0\to \ker \widetilde{\beta}''_0 \to \ker \beta_0 \to \ker \id_\real
 \to \cok \widetilde{\beta}''_0 \to \cok \beta_0 \to \cok \id_\real \to 0. \]
As $\ker \id_\real =0$ and $\cok \id_\real =0$, we obtain
an isomorphism $\cok \widetilde{\beta}''_0 \cong \cok \beta_0,$
i.e. $\redh I^{\olp}_0 (X)\cong IH^{\olq}_0 (CT(X))$.
It remains to be shown that 
$\gamma: IH^{\olq}_0 (CT(X)) \to IH^{\olq'}_0 (CT(X))$ is surjective.
This follows from the surjectivity of $\gamma^{\loc}_0$ and diagram
(\ref{equ.ihqihqprime}).

\bigskip

\subsection{Example}\label{example1}
We may consider the following example to illustrate Theorem \ref{ihhihom}.  Consider the two-sphere, as a stratified
space, thought of as the suspension of $S^1$.  So the two poles are the ``singular" stratum, with link $L=S^1$,
and we will denote these by $\pm \pt$.  Now 
take $X=S^2 \times T^2$ with the induced stratification, $\Sigma = \{\pm \pt \} \times T^2 \subset X$.  The codimension
of $\Sigma$ in $X$ is 2, and any standard perversity takes $\olp(2)=0$. We will first calculate $H_*(I^\olp X)$.
Let $\olM = X - N(\Sigma)$, where $N$ is an open normal neighborhood of $\Sigma$.  Note that 
$\del\olM \cong S^1 \times \{ \pm \pt \} \times T^2$.
The cutoff degree here is $k=1-\olp(2)=1$, so $I^\olp X= \olM \cup_g c(L_{< 1} \times T^2)$, where $L_{<1} \subset L=S^1$.  
For any path connected space, $L_{<1}$ is just a point $e^0$ in the space.  Thus $g$ is the inclusion map
$g: e^0 \times \{ \pm \pt \} \times T^2 \hookrightarrow \olM$.   
The reduced homology is given by
\[
\widetilde{HI}_*^\olp(X) = \widetilde{H}_*(I^\olp X) = \widetilde{H}_*(\olM, e^0 \times \{\pm \pt\} \times T^2).
\]
Then using the relative exact sequence on homology, we can calculate this as:
\[
\widetilde{HI}_i^\olp(X) \cong \left\{
\begin{array}{ll}
0 & i=0 \\
\mathbb{R}^2 & i=1 \\ 
\mathbb{R}^4 & i=2 \\
\mathbb{R}^2 & i=3 \\ 
0 & i=4.
\end{array}
\right. 
\]
Theorem \ref{ihhihom} states that
there is a relationship between the $\widetilde{HI}_i^\olp(X)$ we just calculated and intersection homology groups
for $CT(X)$.  The conifold transition of $X$ is the suspension of $T^2$ times $S^1$:  $CT(X) = S(T^2) \times S^1$.  This has singular stratum $B= \{\pm \pt\} \times S^1$ with link $F=T^2$, so the codimension of $B$ is 3.  
This means there are two possible standard perversities $\olm$ (lower middle) and $\oln$ (upper middle), where 
$\olm (3)=0$ and $\oln (3)=1$.  
The intersection homology groups of $CT(X)$ for the perversities $\olm$ and $\oln$ are calculated in \cite{Ba-HI}, p. 79, 
which also indicates the generators of the classes.   

In order to illustrate Theorem \ref{ihhihom}, we
also need to understand $IH_\olq^*(CT(X))$ where $\olq$ is an extended perversity, as in \cite{FH}, and
we need to understand the maps between
consecutive perversity intersection homology groups to calculate the groups $IG^{(k)}_*(CT(X))$.  
By Proposition \ref{prop.ihctxextperv},
$IH^{\olq}_*(CT(X)) = H_*(\olM)$ for $\olq(3)<0$ and 
$IH^{\olq}_*(CT(X)) =H_*(\olM, \del \olM)$ for $\olq(3)\geq 2.$
If $\olq(3) < \olq'(3),$ then there is a natural map 
\[
IH^\olq_*(CT(X)) \to IH^{\olq'}_*(CT(X)),
\]
since any cycle satisfying the more restrictive condition given by $\olq$ will in particular also satisfy the less restrictive
condition given by $\olq'$.  This is the map that appears in the definition of $IG^{(k)}_*(CT(X))$. Now we can create the following
table that will allow us to calculate these groups.
\begin{table}[ht]
\caption{$IH^\olq_j(CT(X))$} 
\centering 
\begin{tabular}{c | c c c c c c c} 
\hline\hline 
$j \, \backslash \, \olq(3)$ & $-1$ & $\to$  & $0$ & $\to$ & $1$ & $\to$ & $2$  \\ [1ex] 
\hline 
0 &$\mathbb{R}$ & $\cong$ &$\mathbb{R}$&$\cong$ &$\mathbb{R}$& $\stackrel{0}{\rightarrow} $&0 \\ [1ex]
1 &$\mathbb{R}^3$ & $\cong$ & $\mathbb{R}^3$ & $\onto$ & $\mathbb{R}$ & $\stackrel{0}{\rightarrow}$ &$\mathbb{R}$\\[1ex]
2 &$\mathbb{R}^3$&$\onto$&$\mathbb{R}^2$&$\stackrel{0}{\rightarrow}$&$\mathbb{R}^2$&$\hookrightarrow$&$\mathbb{R}^3$ \\[1ex]
3 &$\mathbb{R}$  &$\stackrel{0}{\rightarrow}$&$\mathbb{R}$&$\hookrightarrow$&$\mathbb{R}^3$&$\cong$&$\mathbb{R}^3$\\[1ex]
4 &0  &$\stackrel{0}{\rightarrow}$&$\mathbb{R}$&$\cong$&$\mathbb{R}$&$\cong$&$\mathbb{R}$\\ [1ex] 
\hline 
\end{tabular}
\label{table:nonlin} 
\end{table}

We get, for example,
\begin{eqnarray*}
IG^{(2)}_1 (CT(X)) &\cong& \frac{IH^{\olq(3)=1}_1(CT(X)) \oplus IH^{\olq'(3)=2}_1(CT(X))}{ {\rm Image}\left(IH^{\olq(3)=1}_1(CT(X)) \to H^{\olq'(3)=2}_1(CT(X))\right)} \\
&\cong&  \frac{\R \oplus \R}{0}\\
&\cong & \R^2.
\end{eqnarray*}
Collecting the relevant results, and recalling that $\olp(2)=0$, we get
\begin{eqnarray*}
IG^{(3)}_0 (CT(X)) &\cong &  0\\
IG^{(2)}_1 (CT(X)) &\cong & \R^2\\
IG^{(1)}_2 (CT(X)) &\cong & \R^4\\
IG^{(0)}_3 (CT(X)) &\cong & \R^2\\
IG^{(-1)}_4 (CT(X)) &\cong & 0 .
\end{eqnarray*}
Thus
\[
\widetilde{HI}_j^\olp(X) \cong IG^{(3-j)}_j (CT(X)),
\]
where we see $3 = n - 1 + \olp(2)$,  as in Theorem \ref{ihhihom}.

\section{De Rham Cohomology for $IH$ and $HI$}

\subsection{Extended Perversities and the de Rham Complex for $IH$}

Let $W$ be a pseudomanifold with one
connected smooth singular stratum $B \subset W$ of codimension
$c$ and with link $F$ of dimension $f=c-1$.  (In what follows, we will take $W=CT(X)$, so $B=L$ and $F=\Sigma$.)  Then the only part of the perversity which affects $I\!H_\olp^*(W)$, is the value $\olp(c)$.  
Thus in this special case, we can simplify notation by labelling the intersection cohomology
groups by a number $p$ that depends only on the value $\olp(c)$, rather than by the whole function $\olp$.  
Further, we will fix notation such that the Poincar\'e lemma for a cone has the form:
\begin{equation}\label{ihpoincare}
IH^j_{(q)}(c^\circ F) = \left\{ 
\begin{array}{ll}
H^j(F) & j< q \\
0 & j \geq q.
\end{array} \right.
\end{equation}
That is, the $q$ we use in the notation $IH^j_{(q)} (W)$ gives the cutoff degree in the local cohomology calculation on the link.
The de Rham theorem for intersection cohomology \cite{BHS} states that in this situation,
\[
IH^*_{(c-1-\olp(c))}(W) \cong \Hom(IH^\olp_*(W), \real).
\]

Standard perversities satisfy $0 \leq \olp(c) \leq c-2$, so in terms of the convention we have introduced, this gives $0 < q \leq c-1$. We use an extension of these definitions
in which $q \in \mathbb{R}$.   This does not
give anything dramatically new; when $q \leq 0$,
we get $H^*(\olM,\partial \olM)$, 
where $\olM \cong W - N$ for $N$ an open tubular neighborhood of $B$.
When
$q > c-1$ we get
$H^*(\olM)$. It is also worth recording that when $W$ has an isolated conical singularity $B=\pt$ with 
link $F$, we get the following isomorphisms globally:
\[
IH^j_{(q)}(W) = \left\{ 
\begin{array}{ll}
H^j(\olM) & j< q \\
{\rm Im}(H^j (\olM, \partial \olM) \to H^j(\olM) ) & j=q \\
H^j (\olM,\partial \olM) & j > q,
\end{array} \right.
\]
where in this case $\partial\olM \cong F$.

In order to prove Theorem \ref{ihhi} from the de Rham perspective, we need to use compatible de Rham complexes
to define these cohomologies.  Various complexes have been shown to calculate intersection cohomology of a pseudomanifold.  
We will present first a version of the de Rham complex from \cite{BHS}, adapted to our setting.  

We use the notation from Section \ref{sec.notation}, and in particular let $W=CT(X)$ have an $l$-dimensional smooth singular stratum $L$ with 
link $\Sigma$, a smooth $s$-dimensional manifold, and product link bundle $Y\cong L \times \Sigma$.  Note that $s+1= \codim L$.   
From the isomorphism $Y \cong L \times \Sigma$, we have that $T^*Y \cong T^*L \oplus T^*\Sigma$.  This induces a 
bundle splitting
\[
\Lambda^k (T^* Y) \cong \bigoplus_{i+j=k} \Lambda^i (T^* L) \otimes \Lambda^j (T^* \Sigma).
\]
We write $\Omega^k (Y)=\Gamma^\infty (Y; \Lambda^k (T^* Y))$ for the space of smooth differential
$k$-forms on $Y$, $\Lambda^{i,j} Y = \Lambda^i (T^* L) \otimes \Lambda^j (T^* \Sigma)$ and
$\Omega^{i,j} (Y) = \Gamma^\infty (Y; \Lambda^{i,j} Y)$.
Then also we obtain a splitting of the space of smooth sections,
\[
\Omega^k (Y) \cong \bigoplus_{i+j=k} \Omega^{i,j}(Y), \qquad \alpha = \sum_{i,j} \alpha_{i,j}
\]
as $C^\infty(Y)$-modules.  For $q \in \mathbb{Z}$, define the fiberwise (along $\Sigma$) truncated space of forms over $Y$:
\[
(\ft_{<q}\Omega^* (Y))^k := \{ \alpha \in \Omega^k(Y) \mid \alpha = \sum_{j=0}^{q-1} \alpha_{k-j,j}\}.
\]
Note that $\ft_{<q}\Omega^* (Y)$ is not a complex in general.
Then define the complex:
\begin{equation}\label{IHcomplex}
I\Omega^*_{(q)}(CT(X)) := \{ \omega \in \Omega^*(M) \mid \omega = \eta \mid_M, \eta \in \Omega^*(\olM), \hspace{1in}
\end{equation}
\[
\hspace{2in} \inc^*\eta \in \ft_{<q}\Omega^*(Y), \inc^*(d\eta) \in \ft_{<q}\Omega^*(Y)\}.
\]
(Recall that $\inc:Y=\partial \olM \hookrightarrow \olM$ is the inclusion of the boundary.)
The cohomology of this complex is $IH_{(q)}^*(CT(X))$, as shown in, e.g.,  \cite{BHS}.
We note that there is an inclusion of complexes, 
\[
S_{q, q+1}: I\Omega^*_{(q)}(CT(X)) \hookrightarrow I\Omega^*_{(q+1)}(CT(X)).
\]
This induces a natural map on cohomology, which however is generally neither injective nor surjective.
We will come back to this map later.

\subsection{Extended Perversities and the de Rham Complex for $HI$}
\label{ssec.derhamcplxhi}

In this subsection, we present the de Rham complex defined in \cite{Ba-new}, which computes
the reduced singular cohomology of intersection spaces. Let $L$ be oriented and equipped with a
Riemannian metric.
For flat link bundles $E\to \Sigma$ whose link can be given a Riemannian metric such that the transition functions
are isometries, the first author defined in \cite{Ba-new} a subcomplex
$\Omega^*_{\mathcal{MS}}(\Sigma) \subset \Omega^* (E)$, the
complex of \emph{multiplicatively structured forms}. In the present special case of $E=Y=L\times \Sigma,$
this subcomplex is
\[
\Omega^*_{\mathcal{MS}}(\Sigma)= \{\omega \in \Omega^* (Y) \mid \omega = \sum_{i,j} \pi_L^*\lambda_i \wedge \pi_\Sigma^* \sigma_j\},
\]
where the sum here is finite, and the $\lambda_i \in \Omega^*(L)$ and $\sigma_j \in \Omega^*(\Sigma)$.  We 
may also write this as
\[
\Omega^*_{\mathcal{MS}}(\Sigma)\cong \Omega^*(L)\otimes \Omega^*(\Sigma).
\]
Let $k$ be any integer. The level-$k$ co-truncation of the complex $\Omega^* (L)$ is defined in \emph{loc. cit.} as
the subcomplex $\tau_{\geq k}\Omega^* (L) \subset \Omega^* (L)$ given in degree $m$ by
\[
(\tau_{\geq k}\Omega^* (L))^m =\left\{
\begin{array}{ll}
0 & m<k \\
\ker \delta_L & m=k \\
\Omega^m(L) & m>k,
\end{array} \right.
\]
using the codifferential $\delta_L$ on $\Omega^* (L)$.
Note that for $k\leq 0$, $\tau_{\geq k}\Omega^* (L) = \Omega^* (L)$, while for
$k>l,$ $\tau_{\geq k}\Omega^* (L) = 0$.
The subcomplex 
$\ft_{\geq k}\Omega^*_{\mathcal{MS}}(\Sigma) \subset \Omega^*_{\mathcal{MS}}(\Sigma)$ 
of \emph{fiberwise (along $L$) co-truncated forms} 
is given in degree $m$ by 
\[
(\ft_{\geq k} \Omega^*_{\mathcal{MS}}(\Sigma))^m = 
\{\omega \in \Omega^m(Y) \mid \omega = \sum_{i,j} \pi_L^*\lambda_i \wedge \pi_\Sigma^* \sigma_j, 
 \lambda_i \in \tau_{\geq k}\Omega^*(L)\}.
\]
Taking $k=l-\olp (l+1),$
we set
\[
HI^*_{\dR, \olp}(X):= H^*(\Omega I^*_{\overline{p}}(M)), 
\]
where
\[
\Omega I^*_{\overline{p}}(M):=\{
\omega \in \Omega^*(M) \mid \omega|_{N- \Sigma} = \pi_Y^*\eta, \, \eta \in \ft_{\geq l-\overline{p}(l+1)}\Omega^*_{\mathcal{MS}}(\Sigma)\}.
\]
(Recall from Section \ref{sec.notation} that $N$ is an open tubular neighborhood of
$\Sigma$ with a fixed diffeomorphism $N-\Sigma \cong L\times \Sigma \times (0,1)$
and $\pi_Y: N-\Sigma \to Y=L\times \Sigma$ is the projection.)
The Poincar\'e duality theorem of \cite{Ba-new} asserts that if $\olp$ and $\olq$
are complementary perversities, then wedge product of forms followed by integration
induces a nondegenerate bilinear form
\[ HI^*_{\dR, \olp}(X)\times HI^{n-*}_{\dR, \olq}(X)\to \real,~
 ([\omega],[\eta])\mapsto \int_{X-\Sigma} \omega \wedge \eta, \]
when $X$ is compact and oriented. (This is shown not just for trivial link bundles, but for
any flat link bundle whose transition functions are isometries of the link.)
Furthermore,
using a certain partial smoothing technique, the de Rham theorem of \cite{Ba-new}
asserts that for isolated singularities 
\begin{equation}  \label{equ.derhamforhi}
HI^*_{\dR, \olp} (X)\cong \Hom (\widetilde{H}_* (\ip X;\real),\real).
\end{equation}
This has been generalized by Essig in \cite{Ess} (Theorem 3.4.1) to nonisolated singularities with
trivial link bundle.

We can create a notation for $HI^*_{\dR, \olp}$ that emphasizes the cutoff degree instead of the perversity
in a similar vein to the notation we fixed for 
$IH_{\olp}^*(X)$ in the previous section. With $k= l-\overline{p}(l+1)$, we simply write
\[HI^*_{(k)}(X) := HI^*_{\dR, \overline{p}}(X).\]
Since the only value of $\olp$ to make a difference in the right side of this equation is $\olp (l+1)$, no ambiguity arises
from replacing the function $\olp$ by the number $k$, where now $k$ is giving the cutoff degree in 
the local calculation on the link.  

We observe two useful lemmas about the cohomology of the complex $\Omega I^*_\olp(M)$.  The first one is 
a generalised Mayer-Vietoris sequence.
\begin{lemma}\label{HIMV}
There is a long exact sequence of de Rham cohomology groups as follows:
\[
\cdots \to HI^j_{(k)}(X) \to H^j(\olM) \oplus H^j\left(\ft_{\geq k}\Omega^*_\mathcal{MS}(\Sigma)\right) 
\to H^j(\partial \olM) \to \cdots .
\]
In particular, since $\partial \olM = L \times \Sigma$, the second summand of the middle term is isomorphic to
\[
\bigoplus_{i=k}^l H^i(L) \otimes H^{j-i}(\Sigma).
\]
\end{lemma}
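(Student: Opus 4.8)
The plan is to realize $\Omega I^*_\olp(M)$, up to quasi-isomorphism, as the kernel of a surjection of complexes on the blowup $\olM$, and then to extract the asserted sequence as the long exact cohomology sequence of that surjection. I would first replace $\Omega I^*_\olp(M)$ by the complex
\[
\Omega\widetilde{I}^*_\olp := \{\,\zeta\in\Omega^*(\olM)\ \mid\ \inc^*\zeta\in\ft_{\geq k}\Omega^*_{\mathcal{MS}}(\Sigma)\,\}
\]
of smooth forms on $\olM$ whose boundary restriction is fiberwise (along $L$) co-truncated and multiplicatively structured. This is a subcomplex of $\Omega^*(\olM)$ because $\ft_{\geq k}\Omega^*_{\mathcal{MS}}(\Sigma)\cong\tau_{\geq k}\Omega^*(L)\otimes\Omega^*(\Sigma)$ is itself a subcomplex of $\Omega^*(\partial\olM)$ — the appearance of $\ker\delta_L$ in degree $k$ is exactly what makes $\tau_{\geq k}\Omega^*(L)$ stable under $d_L$. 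Extending a form $\omega\in\Omega I^*_\olp(M)$ across the boundary collar by the $x$-independent form $\pi_Y^*\eta$ it equals near $\Sigma$ gives an inclusion of complexes $\Omega I^*_\olp(M)\hookrightarrow\Omega\widetilde{I}^*_\olp$ onto the subcomplex of forms that are product (pulled back from $\partial\olM$) near $\partial\olM$; a standard collar-deformation argument — the retraction of $\olM$ that crushes a small boundary collar onto $\partial\olM$ is homotopic, through maps fixing $\partial\olM$, to the identity, so it and the accompanying homotopy preserve the boundary condition — shows this inclusion is a quasi-isomorphism. Hence $H^*(\Omega\widetilde{I}^*_\olp)\cong H^*(\Omega I^*_\olp(M))=HI^*_{(k)}(X)$.

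Next I would invoke the tautological short exact sequence of complexes
\[
0\longrightarrow \Omega\widetilde{I}^*_\olp\xrightarrow{\ \zeta\mapsto(\zeta,\,\inc^*\zeta)\ }\Omega^*(\olM)\oplus\ft_{\geq k}\Omega^*_{\mathcal{MS}}(\Sigma)\xrightarrow{\ (\zeta,\eta)\mapsto \inc^*\zeta-\eta\ }\Omega^*(\partial\olM)\longrightarrow0 .
\]
Exactness on the left and in the middle is just the description of $\Omega\widetilde{I}^*_\olp$ as a fibre product, and surjectivity on the right holds because $\inc^*:\Omega^*(\olM)\to\Omega^*(\partial\olM)$ is onto (extend a boundary form through a collar and cut off). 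Passing to the long exact cohomology sequence and substituting $H^*(\Omega\widetilde{I}^*_\olp)=HI^*_{(k)}(X)$, the de Rham cohomology $H^*(\olM)$, and $H^*(\Omega^*(\partial\olM))=H^*(\partial\olM)$ produces precisely the stated Mayer--Vietoris sequence.

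For the identification of the second summand of the middle term I would compute $H^*\bigl(\ft_{\geq k}\Omega^*_{\mathcal{MS}}(\Sigma)\bigr)=H^*\bigl(\tau_{\geq k}\Omega^*(L)\otimes\Omega^*(\Sigma)\bigr)$ by the Künneth theorem over $\real$, which applies since $L$ and $\Sigma$ are closed and hence have finite-dimensional cohomology, thereby reducing to $H^*\bigl(\tau_{\geq k}\Omega^*(L)\bigr)$. Hodge theory on the closed Riemannian manifold $L$ identifies the latter with $\tau_{\geq k}H^*(L)$: in degrees $m>k$ the complex $\tau_{\geq k}\Omega^*(L)$ coincides with $\Omega^*(L)$, while in degree $k$ one uses that $\ker\delta_L$ is the orthogonal complement of $d\Omega^{k-1}(L)$, so its harmonic part represents $H^k(L)$ and $d(\ker\delta_L)=d\Omega^k(L)$ exhausts the exact $(k+1)$-forms. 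Combining, $H^j\bigl(\ft_{\geq k}\Omega^*_{\mathcal{MS}}(\Sigma)\bigr)\cong\bigoplus_{i=k}^{l}H^i(L)\otimes H^{j-i}(\Sigma)$.

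The main obstacle I expect is the first step: showing that the ``honest pullback near $\Sigma$'' condition built into the definition of $\Omega I^*_\olp(M)$ may be relaxed to the mere boundary condition defining $\Omega\widetilde{I}^*_\olp$ without changing cohomology — equivalently, that every cohomology class of $\Omega\widetilde{I}^*_\olp$ has a representative that is product near $\partial\olM$. Once this comparison is in place, Step 2 is formal homological algebra and Step 3 is a routine application of Künneth together with the Hodge decomposition on $L$.
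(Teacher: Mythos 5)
Your proof is correct and follows essentially the same route as the paper: a short exact sequence of complexes with the $HI$ de Rham complex (up to quasi-isomorphism) as kernel, $\Omega^*(\olM)\oplus\ft_{\geq k}\Omega^*_{\mathcal{MS}}(\Sigma)$ in the middle, and forms on the boundary as quotient, followed by K\"unneth and Hodge theory on $L$ to identify the second summand. In fact your insertion of the intermediate complex $\Omega\widetilde{I}^*_{\olp}$ together with the collar quasi-isomorphism is a welcome refinement rather than a detour: the sequence as written in the paper (with the difference map evaluated at the slice $Y\times\{1/2\}$) is not literally exact in the middle, since a form whose restriction to that slice is co-truncated and multiplicatively structured need not be a pullback on all of $N-\Sigma$, and your fibre-product description repairs exactly this point.
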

\begin{proof}  Let $k=l-\olp(l+1)$.
By definition of $\Omega I^*_{\overline{p}}(M)$, we have a short exact sequence of complexes:
\[
0 \to \Omega I^*_{\olp}(M) \to \Omega^*(M) \oplus \ft_{\geq k}\Omega_{\mathcal{MS}}^*(\Sigma) 
\to \Omega^*(\partial \olM) \to 0,
\]
where the second map takes a pair $(\omega, \eta)$ to $\omega| _{Y \times \{ 1/2 \}} - \eta$,
and the first map 
takes $\omega \in \Omega I^*_{\olp}(M)$ with 
$\omega|_{N- \Sigma} = \pi_Y^*\eta$ to $(\omega, \eta)$.
This sequence induces the long exact sequence on cohomology in the lemma.  The form of the second 
summand comes
from the definition of co-truncation and of multiplicatively structured forms.
\end{proof}

\begin{lemma}(K\"unneth for $HI_*$.) \label{lem.HIhomkun}
 If $W$ is a pseudomanifold with only one isolated singularity and $B$ is a closed manifold, then
the homological cross product induces an isomorphism
$\redh I^{\olp}_* (W\times B) \cong \redh I^{\olp}_* (W)\otimes H_* (B).$
\end{lemma}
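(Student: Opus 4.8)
The plan is to reduce the claim to an ordinary K\"unneth statement for a mapping cone, exploiting the explicit description of the intersection space as a homotopy cofiber. Recall that for a pseudomanifold $W$ with a single isolated singularity and link $F$, one has $\ip W = \cone(g_W)$ where $g_W\colon F_{<k} \to \overline{M}_W$ is the composition of the structural map $f\colon F_{<k}\to F = \partial\overline{M}_W$ with the boundary inclusion (here $k = f - \olp(f+1)$, $f=\dim F$). The product $W\times B$ is again a pseudomanifold with isolated singular stratum $\{\mathrm{pt}\}\times B$; however, since $B$ is a closed manifold, this stratum has the "wrong" codimension unless we are careful --- but the construction of intersection spaces in Section \ref{sec.intsp} and Definition \ref{def.intspnonisol} still applies verbatim with $\Sigma$ replaced by $B$, $M$ replaced by $M_W\times B$, and $\overline{M}$ replaced by $\overline{M}_W\times B$. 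The link of $W\times B$ along $\{\mathrm{pt}\}\times B$ is still $F$, the Moore approximation is still $f\colon F_{<k}\to F$ with the \emph{same} cutoff $k$ (because $\olp(f+1)$ is unchanged), and the structural map is
\[
 g_{W\times B}\colon F_{<k}\times B \xrightarrow{f\times\id_B} F\times B = \partial(\overline{M}_W\times B) \hookrightarrow \overline{M}_W\times B,
\]
which is precisely $g_W\times\id_B$.

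First I would record the identification $\ip(W\times B) = \cone(g_W\times\id_B)$ and observe that, since $B$ is a finite CW complex (closed manifold), $\cone(g_W\times\id_B)$ is homotopy equivalent to $\cone(g_W)\times B$ relative to the obvious maps --- more precisely, the natural quotient comparison map gives a homotopy equivalence after passing to the relevant pair, or one uses that the mapping cone of $h\times\id_B$ is homeomorphic to the pushout of $(\cone(h))\leftarrow F_{<k}\times B \to \overline{M}_W\times B$ and this pushout models $\cone(g_W)\times B$ up to homotopy. The cleanest route is via the relative formulation: $\redh_*(\cone(g_W)) \cong H_*(\overline{M}_W \cup_{g_W} C(F_{<k}))$ and a mapping-cone long exact sequence identifies $\redh_*(\ip W)$ with the relative homology of the pair $(\mathrm{cyl}(g_W), F_{<k})$; crossing with $B$ and applying the ordinary K\"unneth theorem for pairs (over the field $\real$, so there are no Tor terms) yields
\[
 \redh_*(\ip(W\times B)) \;\cong\; \redh_*(\ip W)\otimes H_*(B).
\]

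The key steps, in order: (1) unwind the definitions to show the structural map for $W\times B$ is $g_W\times \id_B$ with the same truncation stage $k$; (2) pass from mapping cones to a relative-homology description, i.e. write $\redh_*(\cone(g))\cong H_*(Z_g, F_{<k})$ where $Z_g$ is the mapping cylinder, so that everything is expressed via homology of pairs; (3) invoke the algebraic K\"unneth theorem for pairs of spaces with $\real$ coefficients, using that $H_*(B)$ is finite-dimensional and free, to split $H_*((Z_{g_W}\times B, F_{<k}\times B)) \cong H_*(Z_{g_W},F_{<k})\otimes H_*(B)$; (4) check the resulting isomorphism is induced by the homological cross product, which is formal since the K\"unneth isomorphism \emph{is} the cross product and it is compatible with the maps used in steps (2)--(3). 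One should also note that the hypothesis "$W$ has only one isolated singularity" is used to guarantee that $\ip W$ really is the mapping cone $\cone(g_W)$ as in Definition \ref{def.intspnonisol} with $\Sigma$ a point, so that no additional strata complicate the product construction.

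The main obstacle --- and it is a mild bookkeeping one rather than a deep one --- is step (2)--(4): making sure the mapping-cone/relative-homology translation is natural enough that the product decomposition is genuinely realized by the cross product rather than merely an abstract isomorphism, and handling the reduced-versus-unreduced distinction in degree $0$ cleanly (the cone vertex contributes a $\real$ summand, and one must track it through the crossing with $B$; since $B$ is connected or not, $H_0(B)$ may have rank $>1$, and one checks the reduced homology of $\ip(W\times B)$ picks up exactly $\redh_*(\ip W)\otimes H_*(B)$ and not a spurious extra class). This is routine once one fixes the convention that $\cone(\varnothing\to Z)=Z^+$ and uses reduced homology throughout, as in the remarks following Lemma \ref{lem.redhconef}.
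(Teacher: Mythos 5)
Your proof is correct and follows essentially the same route as the paper's: identify $\ip (W\times B)$ with $\cone(g_W\times\id_B)$ and reduce to the relative K\"unneth theorem over $\real$ via a cofibration/quotient identification (the paper uses the pair $(\cone(g_W)\times B,\{v\}\times B)$ where you use $(Z_{g_W}\times B, F_{<k}\times B)$, which amounts to the same computation). The only caveat is that your opening assertion that $\cone(g_W\times\id_B)$ is homotopy equivalent to $\cone(g_W)\times B$ is false as stated --- the former is the latter with $\{v\}\times B$ collapsed --- but the relative formulation you then adopt avoids this and is sound.
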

\begin{proof}
Let $\olN$ be the blowup of $W$.
Set $k=l-\bar{p}(l+1),$ 
where $l$ is the dimension of the link $L=\partial \olN,$
and let $f:L_{<k} \to L$ be a stage-$k$ Moore approximation
to $L$.
Then $\ip W=\cone (g)$, where $g$ is the composition
\[ L_{<k} \stackrel{f}{\longrightarrow} L = \partial \olN \hookrightarrow \olN. \]
The blowup $\olM$ of $X=W\times B$ is $\olM = \olN \times B$ with boundary $\partial \olM = L\times B$.
The intersection space of $X$ is then $\ip X = \cone (g\times \id_B)$ because
$g\times \id_B$ is the composition
\[ L_{<k} \times B 
   \stackrel{f\times \id_B}{\longrightarrow} 
   L\times B = \partial \olM \hookrightarrow \olM=\olN \times B. \]
Let $v\in \cone (g)$ be the cone vertex and let $Q= (\cone (g)\times B)/(\{ v \} \times B)$,
which is homeomorphic to $\cone (g\times \id_B)$.
As the inclusion $\{ v \} \times B \to \cone (g)\times B$
is a closed cofibration, the quotient map induces an isomorphism
\[ H_* ((\cone (g), \{ v \})\times B)= 
  H_* (\cone (g)\times B, \{ v \} \times B) \cong \redh_* (Q) \cong 
  \redh_* (\cone (g\times \id_B)). \]
By the K\"unneth theorem for relative homology,
\[ H_* ((\cone (g), \{ v \})\times B) \cong
  H_* (\cone (g), \{ v \})\otimes H_* (B) = \redh_* (\cone (g))\otimes H_* (B). \]
Composing, we obtain an isomorphism
\[ \redh I^{\olp}_* (W\times B) =
  \redh_* (\cone (g\times \id_B)) \cong \redh_* (\cone (g))\otimes H_* (B)
  = \redh I^{\olp}_* (W)\otimes H_* (B) . \]
\end{proof}

\begin{lemma}(K\"unneth for $HI^*_{\dR}$.) \label{HIkun} 
If $W$ is a pseudomanifold with only one isolated singularity and $B$ is a smooth closed manifold, then
$HI^*_{\dR, \olp} (W \times B) \cong HI^*_{\dR, \olp} (W) \otimes H^*(B)$.
\end{lemma}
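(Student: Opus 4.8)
The plan is to reduce the statement to the Mayer--Vietoris description of $HI^*_{\dR}$ from Lemma \ref{HIMV}: tensor the short exact sequence of complexes underlying that lemma for $W$ with the de Rham complex $\Omega^*(B)$, compare it termwise with the corresponding sequence for $W\times B$ by means of the de Rham cross product, and conclude with the five-lemma together with the algebraic K\"unneth theorem over the field $\real$.

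First I would fix notation: let $M_W$ be the regular part of $W$, let $\olN$ be its blowup, $L=\partial\olN$ the link (of dimension $l$), and $k=l-\olp(l+1)$. Since $W$ has a single isolated singular point, $\ft_{\geq k}\Omega^*_{\mathcal{MS}}(\mathrm{pt})=\tau_{\geq k}\Omega^*(L)$, so the short exact sequence of complexes produced in the proof of Lemma \ref{HIMV} reads
\[
0 \to \Omega I^*_{\olp}(M_W) \to \Omega^*(M_W)\oplus\tau_{\geq k}\Omega^*(L) \to \Omega^*(L) \to 0 .
\]
For $X=W\times B$ the singular stratum is $B$ and the link is again $L$, so $k$ is unchanged (the codimension of the singular stratum is $l+1$ in both cases); the regular part is $M_W\times B$, the blowup is $\olN\times B$ with boundary $L\times B$, and under $\Omega^*_{\mathcal{MS}}(B)\cong\Omega^*(L)\otimes\Omega^*(B)$ one has $\ft_{\geq k}\Omega^*_{\mathcal{MS}}(B)\cong\tau_{\geq k}\Omega^*(L)\otimes\Omega^*(B)$, so Lemma \ref{HIMV} applied to $X$ gives
\[
0 \to \Omega I^*_{\olp}(M_W\times B) \to \Omega^*(M_W\times B)\oplus\bigl(\tau_{\geq k}\Omega^*(L)\otimes\Omega^*(B)\bigr) \to \Omega^*(L\times B) \to 0 .
\]
Tensoring the first sequence with $\Omega^*(B)$ over $\real$ preserves exactness, and the de Rham cross products $\Omega^*(M_W)\otimes\Omega^*(B)\to\Omega^*(M_W\times B)$ and $\Omega^*(L)\otimes\Omega^*(B)\to\Omega^*(L\times B)$, the identity on $\tau_{\geq k}\Omega^*(L)\otimes\Omega^*(B)$, and the cross-product map $\Omega I^*_{\olp}(M_W)\otimes\Omega^*(B)\to\Omega I^*_{\olp}(M_W\times B)$ should assemble into a morphism from the tensored first sequence to the second.

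The step that needs genuine care — and which I expect to be the only real work — is checking that this is a morphism of short exact sequences, i.e. that the two evident squares commute and that the cross product of a form in $\Omega I^*_{\olp}(M_W)$ with a form on $B$ indeed lands in $\Omega I^*_{\olp}(M_W\times B)$. The square over the quotients commutes because restriction to a collar slice commutes with the cross product; the square over the subobjects, and the membership claim, both reduce to the identity that pulling back $\eta\otimes\tau$ from $L\times B$ to the collar $L\times(0,1)\times B$ yields $\pi_L^*\eta\wedge\pi_B^*\tau$. Granting this, I pass to the long exact sequences in cohomology. The vertical maps on the middle and quotient terms are isomorphisms: the quotient one by the algebraic K\"unneth theorem over $\real$ together with the de Rham K\"unneth theorem ($L$ and $B$ being closed), and the middle one being a direct sum of such a map (for $M_W\times B$, using that $B$ is closed, hence of finite type) with an identity map. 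The five-lemma then forces the remaining vertical map to be an isomorphism, so
\[
HI^*_{\dR,\olp}(W\times B)=H^*\bigl(\Omega I^*_{\olp}(M_W\times B)\bigr)\cong H^*\bigl(\Omega I^*_{\olp}(M_W)\otimes\Omega^*(B)\bigr),
\]
and a last application of the algebraic K\"unneth theorem over $\real$ rewrites the right-hand side as $H^*(\Omega I^*_{\olp}(M_W))\otimes H^*(B)=HI^*_{\dR,\olp}(W)\otimes H^*(B)$.

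Since the heavy lifting is already contained in Lemma \ref{HIMV}, there is no conceptual obstacle; the bookkeeping in the commutative diagram is the main thing to get right. As an alternative one could dualize Lemma \ref{lem.HIhomkun} and combine it with the de Rham theorem \eqref{equ.derhamforhi} for $W$ and with Essig's extension \cite{Ess} for the product $W\times B$, using that $H_*(B;\real)$ is finite-dimensional to split the resulting $\Hom$; the direct argument above is cleaner in that it requires no compactness hypothesis on $W$.
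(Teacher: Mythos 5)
Your proposal is correct and follows essentially the same route as the paper: the paper's second argument is exactly this Mayer--Vietoris/five-lemma comparison built on Lemma \ref{HIMV} (your version merely makes explicit the cross-product morphism of short exact sequences that the paper leaves implicit), and the alternative you sketch at the end --- dualizing Lemma \ref{lem.HIhomkun} via the de Rham isomorphism (\ref{equ.derhamforhi}) under a finiteness hypothesis --- is precisely the paper's first argument.
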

\begin{proof} 
We give two different proofs; the first one, however, assumes $W$ and $B$ to be compact.
In this case the homology groups 
$\redh I^\olp_* (W)$ and $H_* (B)$ are finite dimensional and thus
the natural map
\[ \Hom (\redh I^\olp_* (W),\real)\otimes \Hom (H_* (B),\real) \longrightarrow
 \Hom (\redh I^\olp_* (W) \otimes H_* (B), \real \otimes \real) \]
is an isomorphism. Thus, by Lemma \ref{lem.HIhomkun} and 
the de Rham isomorphism (\ref{equ.derhamforhi}),
\begin{align*}
HI^*_{\dR, \olp} (W\times B)
&\cong \Hom (\redh I^{\olp}_* (W\times B),\real) \\
&\cong \Hom (\redh I^{\olp}_* (W)\otimes H_* (B), \real) \\
& \cong \Hom (\redh I^{\olp}_* (W),\real)\otimes \Hom (H_* (B),\real) \\
& \cong HI^*_{\dR, \olp} (W) \otimes H^*(B).
\end{align*} 

The second argument does not require the compactness assumption.
Let $\olN$ be the blowup of $W$. 
Then if we put $X=W \times B$ in the long exact sequence from Lemma \ref{HIMV}, we get $\Sigma=B$, $\olM = \olN \times B$ and $\partial \olM = (\partial \olN) \times B$.  Thus the second two 
terms decompose as a tensor product with $H^*(B)$, so by the five lemma, so does the first term.
\end{proof}

These lemmas allow us to compute $HI^*_{(k)} (X)$ for values of extended 
perversities that lie outside of the topologically invariant range of Goresky-MacPherson.
For standard perversities, $1 \leq k \leq l$.  If $k \leq 0$, then 
$\tau_{\geq k}\Omega^* (L) = \Omega^* (L)$ and thus the sequence of Lemma \ref{HIMV} becomes
\[
\cdots \to HI^j_{(k)}(X) \stackrel{\phi}{\longrightarrow} H^j(\olM) \oplus H^j\left( \Omega^*_\mathcal{MS}(\Sigma)\right) 
\stackrel{\psi}{\longrightarrow} H^j(\partial \olM) \to \cdots .
\]
The map $\psi$ has the form $\psi = \psi_M - \psi_Y,$ where 
$\psi_M: H^j (\olM)\to H^j (\partial \olM)$ is restriction and 
$\psi_Y: H^j \left( \Omega^*_\mathcal{MS}(\Sigma)\right) \to H^j (\partial \olM)$ is induced by the
inclusion of complexes.
Since $\psi_Y$ is in the present case an isomorphism, the map $\psi$ is surjective and thus $\phi$ is injective.
Now
\[ \im \phi = \ker \psi = \{ (\omega, \eta) ~|~ \psi_M (\omega)=\psi_Y (\eta) \} =
 \{ (\omega, \psi^{-1}_Y \psi_M (\omega)) \}, \]
which is isomorphic to $H^* (\olM)$.
We conclude that
$HI^*_{(k)}(X) \cong H^*(\olM)$ when $k\leq 0$.
On the other hand, if $k >l$, then $\tau_{\geq k}\Omega^* (L) = 0$ and hence the 
sequence of Lemma \ref{HIMV} becomes
\[
\cdots \to HI^j_{(k)}(X) \longrightarrow H^j(\olM)  
 \longrightarrow H^j(\partial \olM) \to \cdots .
\]
Therefore, $HI^*_{(k)}(X) \cong H^*(\olM,\del \olM)$ when $k>l$.
Combining these observations with the homological Proposition \ref{prop.hiextcalc},
we see that the de Rham isomorphism
to $\Hom (\widetilde{H}_* (I^\olp X),\real)$ 
proved by Essig for standard perversities also holds
for extended perversities: If $k>l$, then
\[ HI^*_{(k)} (X)\cong H^* (\olM, \partial \olM) \cong
  \Hom (H_* (\olM, \partial \olM),\real) \cong 
  \Hom (\redh I^{\olp}_* (X),\real), \]
whereas if $k\leq 0$, then
\[ HI^*_{(k)} (X)\cong H^* (\olM) \cong
  \Hom (H_* (\olM),\real) \cong 
  \Hom (\redh I^{\olp}_* (X),\real). \]
Furthermore, Poincar\'e duality also then works for these extended perversities since relative and absolute (co)homology pair nondegenerately under the standard
intersection pairing.

\subsection{An Alternative de Rham Complex for $HI$}

We continue to assume that $L$ is oriented.
We now want to define a new, equivalent de Rham complex for $HI^*_{(k)}(X)$ that is analogous to the de Rham complex we presented above for $IH^*_{(q)}(CT(X))$.  In order to do this, we need to extend 
the operator $\delta_L$ from multiplicatively structured forms on $Y$ to all smooth forms on $Y$.  This is standard, but
we give details here for clarity.  First, we can decompose the exterior derivative according to the splitting of $Y$ as
\[
d_Y = \dL + (-1)^i \dS
\] 
for $(i,*)$-forms, where for 
${\bf z}=(z_1, \ldots,z_l)$ local coordinates on a coordinate patch $U\subset L$ and ${\bf y}=(y_1, \ldots, y_s)$
local coordinates on a coordinate patch $V \subset \Sigma$ and multi-indices ${\bf I}$ and ${\bf J}$, 
\[
\dL(f({\bf z}, {\bf y})dz_{\bf I}\wedge dy_{\bf J}):= \sum_{i=1}^l \frac{\del f}{\del z_i} dz_i \wedge dz_{\bf I}\wedge dy_{\bf J} 
\]
and
\[
\dS(f({\bf z}, {\bf y})dz_{\bf I}\wedge dy_{\bf J}):= \sum_{j=1}^s \frac{\del f}{\del y_j} dz_{\bf I}\wedge dy_j \wedge dy_{\bf J}.
\]
Note that $\dL(\pi_L^* \lambda\wedge \pi_\Sigma^*\sigma) =  \pi_L^*(d_L \lambda)\wedge \pi_\Sigma^*\sigma$ and $\dS(\pi_L^* \lambda\wedge \pi_\Sigma^* \sigma) = \pi_L^* \lambda\wedge \pi_\Sigma^*(d_\Sigma \sigma)$,
so these operators extend the exterior derivatives on multiplicatively structured forms to operators over all smooth forms
on $Y$.

Now fix a metric $g_L$ on $L$.  This defines a Hodge star operator on $\Omega^*(L)$, which may be extended to 
forms on $Y$ via the rule 
\[
\starL(dz_I \wedge dy_J):= (*_L dz_I) \wedge dy_J.
\]
Now we can extend the adjoint operator of $d_L$ to forms on $Y$ by setting for $(i,j)$-forms that
\[
\delL:=(-1)^{li +l+ 1} \starL\dL \starL.
\]
Note that this does extend the adjoint operator from multiplicatively structured forms.  From the coordinate 
definitions and the invariance of $g_L$ in the $V$ coordinates, we can observe:
\begin{equation}\label{useful}
\dL \dS = \dS \dL, \qquad
\dS \starL = \starL \dS,
\qquad \dS \delL = \delL \dS.
\end{equation}
Furthermore, we can lift the Hodge decomposition for $\Omega^*(L)$ to any neighborhood $L \times V$ by observing
that for any fixed ${\bf y} \in V$, we have a decomposition of $\omega \in \Omega^{i,j}(L \times V)$ given by
\[
\omega = \sum_{|J|=j}  \lambda_J({\bf y}) \wedge dy_J,
\]
where each $\lambda_J({\bf y}) \in \Omega^i(L)$ decomposes as $d_L\lambda_{1,J}({\bf y}) + \delta_L \lambda_{2,J}({\bf y}) + \lambda_{3,J}({\bf y})$,
with $\lambda_{3,J}({\bf y}) \in \calH^i(L)$.  Thus altogether we can decompose
\begin{eqnarray}\nonumber
\omega &=& \sum_{|J|=j} d_L\lambda_{1,J}({\bf y}) \wedge dy_J + \sum_{|J|=j} \delta_L\lambda_{2,J}({\bf y}) \wedge dy_J+\sum_{|J|=j}  \lambda_{3,J}({\bf y}) \wedge dy_J\\ \nonumber
&=& \dL \sum_{|J|=j} \lambda_{1,J}({\bf y}) \wedge dy_J + \delL\sum_{|J|=j} \lambda_{2,J}({\bf y}) \wedge dy_J+\sum_{|J|=j}  \lambda_{3,J}({\bf y}) \wedge dy_J\\
&=& \dL \omega_1({\bf y}) + \delL\omega_2({\bf y}) + \omega_3({\bf y}).
\end{eqnarray}
Now putting this together for all $y\in V$, we get a unique decomposition of any form in $\Omega^{i,j}(L\times V)$ into 
pieces in the image of $\dL$, in the image of $\delL$ and in the kernel of both.

\begin{lemma}\label{hodgelem}
We have the following decomposition, where the sums are vector space direct sums:
\[
\Omega^{i,j}Y = \dL \Omega^{i-1,j}(Y) \oplus \delL \Omega^{i+1,j}(Y) \oplus \left(\calH^i(L)\otimes\Omega^j(\Sigma)\right).
\]
Further, $\dS$ preserves this decomposition.
\end{lemma}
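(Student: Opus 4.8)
The plan is to obtain the decomposition by applying the Hodge theory of the fixed closed Riemannian manifold $(L,g_L)$ in the fibre direction of $Y=L\times\Sigma$. Since $g_L$ does not vary over $\Sigma$, the harmonic projector $H_L$ and the Green operator $G_L$ of $(L,g_L)$ are fixed continuous operators on $\Omega^*(L)$ (with its $C^\infty$ topology) satisfying $\mathrm{id}=H_L+d_L\delta_L G_L+\delta_L d_L G_L$, with $G_L$ and $H_L$ commuting with $d_L$ and $\delta_L$, with $G_L$ annihilating the harmonic forms, and with $\calH^*(L)=\ker d_L\cap\ker\delta_L$. Working over a chart $V\subset\Sigma$ and writing $\omega\in\Omega^{i,j}(L\times V)$ as $\omega=\sum_J\lambda_J(\mathbf{y})\wedge dy_J$ with $\lambda_J(\mathbf{y})\in\Omega^i(L)$, I would set $\widetilde{G}_L\omega:=\sum_J (G_L\lambda_J(\mathbf{y}))\wedge dy_J$ and $\widetilde{H}_L\omega:=\sum_J (H_L\lambda_J(\mathbf{y}))\wedge dy_J$; continuity of $G_L$ and $H_L$ makes these smooth in $\mathbf{y}$, and since $G_L$ and $H_L$ depend only on $g_L$ the definitions are independent of $V$ and patch to operators on $\Omega^{i,j}(Y)$. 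One then checks directly from the coordinate formulas above --- and from $\delL=(-1)^{li+l+1}\starL\dL\starL$ together with $\starL(dz_I\wedge dy_J)=(*_L dz_I)\wedge dy_J$ --- that $\dL$, $\delL$ and $\starL$ are precisely the operators obtained in the same coefficientwise way from $d_L$, $\delta_L$ and $*_L$. Hence $\mathrm{id}=\widetilde{H}_L+\dL\,\delL\widetilde{G}_L+\delL\,\dL\widetilde{G}_L$ on $\Omega^{i,j}(Y)$.

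Granting this, I would next read off the three summands. For $\omega\in\Omega^{i,j}(Y)$ the term $\dL\,\delL\widetilde{G}_L\omega$ equals $\dL(\delL\widetilde{G}_L\omega)$ with $\delL\widetilde{G}_L\omega\in\Omega^{i-1,j}(Y)$, so it lies in $\dL\Omega^{i-1,j}(Y)$; symmetrically $\delL\,\dL\widetilde{G}_L\omega\in\delL\Omega^{i+1,j}(Y)$; and, choosing a basis $e_1,\dots,e_N$ of the finite-dimensional space $\calH^i(L)$ and writing $H_L\lambda_J(\mathbf{y})=\sum_m c_{m,J}(\mathbf{y})e_m$ with smooth coefficients $c_{m,J}$, one gets $\widetilde{H}_L\omega=\sum_m\pi_L^*e_m\wedge\pi_\Sigma^*\sigma_m$ for suitable $\sigma_m\in\Omega^j(\Sigma)$, so $\widetilde{H}_L\omega\in\calH^i(L)\otimes\Omega^j(\Sigma)$. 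This shows that the three subspaces span $\Omega^{i,j}(Y)$. For directness, suppose $\dL\alpha+\delL\beta+\eta=0$ with $\alpha\in\Omega^{i-1,j}(Y)$, $\beta\in\Omega^{i+1,j}(Y)$ and $\eta\in\calH^i(L)\otimes\Omega^j(\Sigma)$. Applying $\widetilde{H}_L$ and using that $d_L$-exact and $\delta_L$-exact forms on $L$ are $L^2$-orthogonal to harmonic forms kills the first two terms and fixes $\eta$, so $\eta=0$; applying $\dL\,\delL\widetilde{G}_L$ and using $G_L d_L=d_L G_L$, $G_L\delta_L=\delta_L G_L$, $\delta_L^2=0$ and $G_L|_{\calH^*(L)}=0$ returns $\dL\alpha$ from the first term and $0$ from the other two, so $\dL\alpha=0$, and then $\delL\beta=0$ as well. (Fibrewise over $\mathbf{y}$ this is nothing but the uniqueness of the Hodge decomposition of $\Omega^i(L)$, which the discussion preceding the lemma has in effect already recorded.)

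Finally, for the claim that $\dS$ preserves the decomposition: by $(\ref{useful})$ we have $\dS\dL=\dL\dS$ and $\dS\delL=\delL\dS$, and from $\dS(\pi_L^*\lambda\wedge\pi_\Sigma^*\sigma)=\pi_L^*\lambda\wedge\pi_\Sigma^*(d_\Sigma\sigma)$ we get that $\dS$ sends $\dL\Omega^{i-1,j}(Y)$ into $\dL\Omega^{i-1,j+1}(Y)$, sends $\delL\Omega^{i+1,j}(Y)$ into $\delL\Omega^{i+1,j+1}(Y)$, and sends $\calH^i(L)\otimes\Omega^j(\Sigma)$ into $\calH^i(L)\otimes\Omega^{j+1}(\Sigma)$; therefore applying $\dS$ to the componentwise splitting of $\omega$ yields the componentwise splitting of $\dS\omega$. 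The only genuinely analytic point is the first one --- that $\widetilde{G}_L$ and $\widetilde{H}_L$ are globally well defined on $\Omega^{i,j}(Y)$ and preserve smoothness in the $\Sigma$-direction --- and this is exactly where one uses that $G_L$, $H_L$ are continuous on $\Omega^*(L)$ and that $g_L$ is independent of $\Sigma$; everything else is formal manipulation with the Hodge identities on $L$ and with $(\ref{useful})$.
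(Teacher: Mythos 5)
Your proof is correct and follows essentially the same route as the paper: the decomposition is obtained by applying the Hodge decomposition of $(L,g_L)$ fibrewise over $\Sigma$, and the $\dS$-invariance follows from the commutation relations (\ref{useful}). The only difference is that you make explicit, via the Green operator and harmonic projector of $(L,g_L)$ acting coefficientwise, the smooth dependence on the $\Sigma$-variable that the paper's pointwise argument leaves implicit --- a welcome but not essentially different refinement.
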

\begin{proof}
We have already demonstrated the decomposition, since this is done pointwise in $B$ (finite dimensionality of 
$\calH^i(L)$ allows us to write the last term of the decomposition as a tensor product).  The fact that it is preserved
by $\dS$ follows from (\ref{useful}).
\end{proof}
Note that since $L \times \Sigma$ is a product, we can also apply Lemma \ref{hodgelem} in the other direction,
namely that $\dL$ preserves the Hodge decomposition for $\Sigma$.  In this way, we get in fact a double
Hodge decomposition.
A graded vector space $\hft_{\geq k} \Omega^* (Y)$ of alternatively fiberwise co-truncated forms is given by
\[
(\hft_{\geq k}\Omega^* (Y))^j := \{ \alpha= \sum_{i=k}^l \alpha_{i, j-i} \mid \delL \alpha_{k,j-k}=0 \}.
\]
\begin{lemma}
The differential $d_Y$ restricts to $\hft_{\geq k} \Omega^* (Y)$.
\end{lemma}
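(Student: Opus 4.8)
The plan is to expand $d_Y$ in bidegrees and track what happens to the components of a given form. Write $\alpha\in(\hft_{\geq k}\Omega^*(Y))^j$ as $\alpha=\sum_{i=k}^{l}\alpha_{i,j-i}$, and recall that on an $(i,*)$-form one has $d_Y=\dL+(-1)^i\dS$, so that $\dL$ raises the $L$-degree by one and $\dS$ raises the $\Sigma$-degree by one. Collecting the terms of $d_Y\alpha$ by target bidegree, one obtains for each $m$
\[
(d_Y\alpha)_{m,\,j+1-m}=\dL\alpha_{m-1,\,j-m+1}+(-1)^m\dS\alpha_{m,\,j-m},
\]
with the convention that $\alpha_{i,*}=0$ whenever $i\notin[k,l]$ (and both terms automatically vanish when the $L$-degree would exceed $l$). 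Since the first summand requires $m-1\geq k$ and the second requires $m\geq k$, no component of $L$-degree below $k$ is produced; thus $d_Y\alpha=\sum_{i=k}^{l}(d_Y\alpha)_{i,\,j+1-i}$ has the required bidegree support.

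It then remains only to verify the extra condition defining $\hft_{\geq k}$, namely that the bottom component $(d_Y\alpha)_{k,\,j+1-k}$ is annihilated by $\delL$. Because $\alpha_{k-1,*}=0$, the $\dL$-term in the displayed formula drops out for $m=k$, leaving $(d_Y\alpha)_{k,\,j+1-k}=(-1)^k\dS\alpha_{k,\,j-k}$. Applying $\delL$ and using the commutation identity $\dS\delL=\delL\dS$ from (\ref{useful}), one finds $\delL\dS\alpha_{k,\,j-k}=\dS\delL\alpha_{k,\,j-k}=0$, since $\delL\alpha_{k,\,j-k}=0$ by hypothesis. Hence $d_Y\alpha\in(\hft_{\geq k}\Omega^*(Y))^{j+1}$, so $d_Y$ indeed restricts to the graded subspace $\hft_{\geq k}\Omega^*(Y)$; that this restriction is then a differential is immediate from $d_Y^2=0$.

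There is no real obstacle here: the argument is pure bidegree bookkeeping together with one commutation relation. The only point deserving care is at the bottom of the filtration — one must notice that the potential $\dL\alpha_{k-1,*}$ contribution is absent precisely because $\alpha$ has no $L$-degree $(k-1)$ part, and that the single identity $\dS\delL=\delL\dS$ is exactly what propagates $\delL$-closedness from $\alpha_{k,\,j-k}$ to $(d_Y\alpha)_{k,\,j+1-k}$. The degenerate cases $k\leq 0$ (where $\hft_{\geq k}\Omega^*(Y)=\Omega^*(Y)$ and the $\delL$-condition is vacuous) and $k>l$ (where the space is zero) are covered by the same formulas.
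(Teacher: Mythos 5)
Your proof is correct and follows essentially the same route as the paper's: both arguments observe that $d_Y$ does not lower the $L$-degree, identify the bottom bidegree component of $d_Y\alpha$ as $\pm\dS\alpha_{k,j-k}$, and invoke the commutation relation $\dS\delL=\delL\dS$ from (\ref{useful}) to propagate $\delL$-closedness. Your explicit bidegree bookkeeping and remarks on the degenerate cases are just a slightly more detailed presentation of the identical argument.
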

\begin{proof}
The differential $d_Y = \dL \pm \dS$ does not lower the $L$-degree $i$ of a form $\alpha_{i,j-i}$.
Thus, if $\alpha= \sum_{i=k}^l \alpha_{i, j-i},$ then
$d_Y \alpha$ can again be written in the form
$d_Y \alpha= \sum_{i=k}^l \beta_{i, j+1-i},$ $\beta_{i,j+1-i}\in \Omega^{i,j+1-i} (Y)$.
Assume that $\delL \alpha_{k,j-k}=0$. Since
\begin{align*}
d_Y \alpha &= (\dL \pm \dS)(\alpha_{k,j-k} + \alpha_{k+1,j-k-1} + \cdots)\\
&=  \dL \alpha_{k,j-k} \pm \dS \alpha_{k,j-k} + \dL \alpha_{k+1,j-k-1} \pm \dS \alpha_{k+1,j-k-1} + \cdots,
\end{align*}
the component in bidegree $(k,j+1-k)$ of $d_Y \alpha$ is
\[ (d_Y \alpha)_{k,j+1-k} = \pm \dS \alpha_{k,j-k}. \]
Using (\ref{useful}),
\[ \delL (d_Y \alpha)_{k,j+1-k}  = \pm \delL \dS \alpha_{k,j-k} = \pm \dS \delL \alpha_{k,j-k} =0. \]
This shows that $d_Y \alpha \in (\hft_{\geq k}\Omega^* (Y))^{j+1}.$
\end{proof}
By the lemma, $\hft_{\geq k} \Omega^* (Y)$ is a differential complex.
Now we can define the new deRham complex for $HI^*_{(k)}(X)$ as:
\begin{equation}\label{dRHI}
\widehat{\Omega I}^*_{(k)}(X):=\{ \mu|_M \,\mid  \, \mu \in \Omega^*(\olM), \, \inc^*(\mu) \in 
\hft_{\geq k}\Omega^*(Y)\}.
\end{equation}
We want to show this complex is equivalent to the original de Rham complex.  For this, we will need 
the K\"unneth Theorem.
\begin{thm}(K\"unneth Theorem) \label{kunneth}  Let $Y=L \times \Sigma$.  Then the inclusion of complexes
\[
\Omega^*(L) \otimes \Omega^*(\Sigma) \cong \Omega_\mathcal{MS}(\Sigma) \hookrightarrow \Omega^*(Y)
\]
induces an isomorphism on cohomology.  In particular, if $\alpha \in \Omega^*_\mathcal{MS}(\Sigma)$ and 
$\alpha = d\beta$ for $\beta \in \Omega^*(Y)$, then $\beta = d\gamma + \beta'$, 
where $\beta' \in \Omega^*_\mathcal{MS}(\Sigma)$.
\end{thm}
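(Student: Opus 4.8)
The ``in particular'' clause is a formal consequence of the first assertion, so the real content is that the inclusion of complexes is a quasi-isomorphism. The plan is to factor $\Omega^*_{\mathcal{MS}}(\Sigma)\hookrightarrow\Omega^*(Y)$ through the intermediate subcomplex $\mathcal{H}^*(L)\otimes\Omega^*(\Sigma)$ of $\Omega^*(Y)$. Note this is genuinely a subcomplex: by Lemma~\ref{hodgelem} the operator $\dS$ preserves the harmonic summand and $\dL$ annihilates it, so $d_Y=\dL+(-1)^i\dS$ restricts to $(-1)^i\dS$ there. The first step would be to observe that the Hodge inclusion $\mathcal{H}^*(L)\hookrightarrow\Omega^*(L)$ is a quasi-isomorphism (Hodge theorem on the closed manifold $L$), and that tensoring it over the field $\real$ with $\Omega^*(\Sigma)$ — an exact operation — yields a quasi-isomorphism $\mathcal{H}^*(L)\otimes\Omega^*(\Sigma)\hookrightarrow\Omega^*(L)\otimes\Omega^*(\Sigma)=\Omega^*_{\mathcal{MS}}(\Sigma)$, the common cohomology being $H^*(L)\otimes H^*(\Sigma)$ by the algebraic K\"unneth theorem over $\real$.

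The second step would be to show that the inclusion $\mathcal{H}^*(L)\otimes\Omega^*(\Sigma)\hookrightarrow\Omega^*(Y)$ is a quasi-isomorphism. Here I would use the double-complex structure $(\Omega^{i,j}(Y),\dL,(-1)^i\dS)$ and its filtration by the $L$-degree $i$, which is bounded ($0\le i\le l$), so the associated spectral sequence converges and the comparison theorem applies. Lemma~\ref{hodgelem} identifies the $\dL$-cohomology of $(\Omega^{\bullet,j}(Y),\dL)$ in each fixed $\Sigma$-degree $j$ with $\mathcal{H}^*(L)\otimes\Omega^j(\Sigma)$; since the subcomplex $\mathcal{H}^*(L)\otimes\Omega^*(\Sigma)$ carries vanishing $\dL$, the inclusion induces an isomorphism on the $E_1$-pages, hence a quasi-isomorphism. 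Composing the two steps gives the first assertion. (An alternative to the second step is the classical Mayer--Vietoris induction over a finite good cover of $\Sigma$, using the Poincar\'e lemma on contractible charts and exactness of $\Omega^*(L)\otimes_{\real}(-)$; this is available because $\Sigma$ is closed.)

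Finally, the ``in particular'' clause would follow by a diagram chase. Given $\alpha\in\Omega^*_{\mathcal{MS}}(\Sigma)$ with $\alpha=d_Y\beta$, $\beta\in\Omega^*(Y)$: then $\alpha$ is $d_Y$-closed and exact in $\Omega^*(Y)$, so by injectivity of the induced map on cohomology there is $\beta''\in\Omega^*_{\mathcal{MS}}(\Sigma)$ with $d_Y\beta''=\alpha$; the difference $\beta-\beta''$ is $d_Y$-closed, so by surjectivity there are a $d_Y$-closed $\xi\in\Omega^*_{\mathcal{MS}}(\Sigma)$ and $\gamma\in\Omega^*(Y)$ with $\beta-\beta''=\xi+d_Y\gamma$; setting $\beta':=\beta''+\xi\in\Omega^*_{\mathcal{MS}}(\Sigma)$ gives $\beta=d_Y\gamma+\beta'$.

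I expect the main obstacle to be the bookkeeping in the second step: checking carefully that $\mathcal{H}^*(L)\otimes\Omega^*(\Sigma)$ is a subcomplex of $\Omega^*(Y)$ with exactly the differential claimed, that the filtration is exhaustive and bounded so that the spectral sequences converge, and that the inclusion is filtration-preserving and induces the asserted $E_1$-isomorphism via Lemma~\ref{hodgelem}. The remaining ingredients --- the algebraic K\"unneth theorem, the Hodge theorem on $L$, and the closing diagram chase --- are routine.
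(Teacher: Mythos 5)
The paper states Theorem \ref{kunneth} without proof, treating it as the classical de Rham K\"unneth theorem, so there is no in-paper argument to compare against; your proposal supplies a correct, essentially self-contained proof, and the overall architecture --- factoring through $\calH^*(L)\otimes\Omega^*(\Sigma)$, using Lemma \ref{hodgelem} for the fibrewise $\dL$-cohomology, and closing with the standard diagram chase for the ``in particular'' clause --- is sound. One bookkeeping correction in your second step: for $\dL$ to be the $E_0$-differential (so that Lemma \ref{hodgelem} computes the $E_1$-page) you must filter $\Omega^{*,*}(Y)$ by the $\Sigma$-degree $j$, not by the $L$-degree $i$ as written. Filtering by $i$ makes $\pm\dS$ the $E_0$-differential, and then the inclusion is \emph{not} an isomorphism at $E_1$ (one gets $\calH^p(L)\otimes H^q(\Sigma)$ mapping into $\Omega^p(L)\otimes\calH^q(\Sigma)$, and the comparison would have to be deferred to $E_2$ using a Hodge decomposition along $\Sigma$ as well). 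With the $\Sigma$-degree filtration, which is bounded, both spectral sequences converge and the comparison theorem applies exactly as you describe. Two smaller points worth making explicit: the identification $H^i_{\dL}(\Omega^{\bullet,j}(Y))\cong\calH^i(L)\otimes\Omega^j(\Sigma)$ uses not only the direct-sum decomposition of Lemma \ref{hodgelem} but also that $\ker\dL\cap\delL\Omega^{i+1,j}(Y)=0$, which follows pointwise in $\Sigma$ from the integration-by-parts identity on the closed manifold $L$; and the differential induced on the algebraic tensor product $\Omega^*(L)\otimes\Omega^*(\Sigma)$ by $d_Y$ is indeed the standard tensor-product differential, so the exactness-of-tensoring argument in your first step applies verbatim.
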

\begin{lemma}\label{hieq}
The cohomology, $\widehat{HI}^*_{(k)}(X)$, of the complex $\widehat{\Omega I}^*_{(k)}(X)$ is isomorphic to $HI^*_{(k)}(X)$.
\end{lemma}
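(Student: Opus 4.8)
The plan is to produce a natural inclusion of complexes $j\colon \Omega I^*_{\overline{p}}(M)\hookrightarrow \widehat{\Omega I}^*_{(k)}(X)$, to fit it into a map of Mayer--Vietoris sequences, and thereby to reduce the whole statement to a single quasi-isomorphism statement about the co-truncated form complexes on the boundary $Y$, which I will settle with Hodge theory along $L$.

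First I would construct $j$. If $\omega\in\Omega I^*_{\overline{p}}(M)$, then $\omega$ is a product near $\Sigma$, namely $\omega|_{N-\Sigma}=\pi_Y^*\eta$ with $\eta\in\ft_{\geq k}\Omega^*_{\mathcal{MS}}(\Sigma)$, so $\omega$ extends smoothly over the blowup $\olM$ with $\inc^*$ of the extension equal to $\eta$. Writing $\eta=\sum_{i,j}\pi_L^*\lambda_i\wedge\pi_\Sigma^*\sigma_j$ with $\lambda_i\in\tau_{\geq k}\Omega^*(L)$, the bidegree $(k,\cdot)$ part of $\eta$ has $L$-factors in $\ker\delta_L=(\tau_{\geq k}\Omega^*(L))^k$, hence $\delL$ of that part vanishes and $\eta\in\hft_{\geq k}\Omega^*(Y)$. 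Thus $\omega\in\widehat{\Omega I}^*_{(k)}(X)$, and $j$ is a map of differential complexes. The same computation shows that the inclusion $\iota\colon\ft_{\geq k}\Omega^*_{\mathcal{MS}}(\Sigma)\hookrightarrow\hft_{\geq k}\Omega^*(Y)$ is well defined.

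Next, I would observe that $\widehat{\Omega I}^*_{(k)}(X)$ is the fibre product of $\Omega^*(\olM)$ and $\hft_{\geq k}\Omega^*(Y)$ over $\Omega^*(Y)$ (along $\inc^*$ and the inclusion); since $\inc^*$ is onto, this gives a short exact sequence $0\to\widehat{\Omega I}^*_{(k)}(X)\to\Omega^*(\olM)\oplus\hft_{\geq k}\Omega^*(Y)\xrightarrow{(\mu,\alpha)\mapsto\inc^*\mu-\alpha}\Omega^*(Y)\to 0$, and hence a long exact sequence $\cdots\to\widehat{HI}^j_{(k)}(X)\to H^j(\olM)\oplus H^j(\hft_{\geq k}\Omega^*(Y))\to H^j(Y)\to\cdots$. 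Via $j$ this sequence receives a map from the Mayer--Vietoris sequence of Lemma \ref{HIMV}, which is the identity on the $H^j(\olM)$- and $H^j(\partial\olM)=H^j(Y)$-summands and is $\iota_*$ on the co-truncation cohomologies. By the five lemma, Lemma \ref{hieq} will follow once $\iota$ is shown to be a quasi-isomorphism.

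The core of the argument is therefore to prove that $\iota$ is a quasi-isomorphism, and here I would use the metric $g_L$ and the fibrewise Hodge decomposition of Lemma \ref{hodgelem}. Both $\ft_{\geq k}\Omega^*_{\mathcal{MS}}(\Sigma)\cong\tau_{\geq k}\Omega^*(L)\otimes\Omega^*(\Sigma)$ and $\hft_{\geq k}\Omega^*(Y)$ split, as $d_Y$-complexes, in the form $\mathcal{B}_{\geq k}\oplus(\text{acyclic})$, where $\mathcal{B}_{\geq k}:=\bigoplus_{i\geq k}\mathcal{H}^i(L)\otimes\Omega^*(\Sigma)$ carries the differential $\pm\dS$ and $\iota$ restricts to the identity on $\mathcal{B}_{\geq k}$; so it suffices to see that each complementary summand is acyclic. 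For $\ft_{\geq k}\Omega^*_{\mathcal{MS}}(\Sigma)$ the complement is $C_L\otimes\Omega^*(\Sigma)$ with $C_L:=\tau_{\geq k}\Omega^*(L)/\bigoplus_{i\geq k}\mathcal{H}^i(L)$; since harmonic forms represent every class of $H^*(\tau_{\geq k}\Omega^*(L))=\tau_{\geq k}H^*(L)$ (the latter by \cite{Ba-new}), $C_L$ is acyclic, hence so is $C_L\otimes\Omega^*(\Sigma)$ by K\"unneth. For $\hft_{\geq k}\Omega^*(Y)$ the complement is the subcomplex $A^\flat_{\geq k}\subset\dL\Omega^*(Y)\oplus\delL\Omega^*(Y)$ of forms of $L$-degree $\geq k$ whose $L$-degree-$k$ component lies in $\delL\Omega^{k+1,\cdot}(Y)$. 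Filtering $A^\flat_{\geq k}$ by $L$-degree (a bounded filtration) and using that $\dS$ commutes with $\dL$, $\delL$ and $\starL$ (equation \eqref{useful}), the $E_1$-page is $(d_L\Omega^{p-1}(L)\oplus\delta_L\Omega^{p+1}(L))\otimes H^*(\Sigma)$ for $p>k$ and $\delta_L\Omega^{k+1}(L)\otimes H^*(\Sigma)$ for $p=k$, with $d_1$ induced by $d_L$; since $d_L\delta_L\Omega^{p+1}(L)=d_L\Omega^p(L)$ and a closed coexact form vanishes, a short computation gives $E_2=0$, so $A^\flat_{\geq k}$ is acyclic. This completes the reduction and hence the proof.

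The step I expect to be the main obstacle is precisely this last one, the vanishing $H^*(A^\flat_{\geq k})=0$ --- equivalently, the correct computation of $H^*(\hft_{\geq k}\Omega^*(Y))$: the naive $L$-degree truncation $\{$forms of $L$-degree $\geq k\}$ does \emph{not} compute $\tau_{\geq k}H^*(L)\otimes H^*(\Sigma)$, since it retains spurious exact and coexact classes in $L$-degree $k$, and the $\delL$-closedness built into the definition of $\hft_{\geq k}$ is exactly what removes them; making this precise needs genuine Hodge theory in the $L$-direction, that is, the double Hodge decomposition of Lemma \ref{hodgelem}.
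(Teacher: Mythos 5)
Your proof is correct, but it is organized quite differently from the paper's. The paper works directly with representatives: it takes the inclusion $\Omega I^*_{(k)}(X)\hookrightarrow\widehat{\Omega I}^*_{(k)}(X)$ and proves injectivity and surjectivity of the induced map on cohomology by hand, first killing the normal component of a primitive (resp.\ of a closed form) via the cutoff trick $\hat\beta\mapsto\hat\beta-d\chi\int_0^x\hat\beta_n(t)\,dt$, and then using the K\"unneth Theorem \ref{kunneth} for $Y$ together with the $L$-Hodge decomposition to replace the resulting boundary data by multiplicatively structured, correctly co-truncated data. You instead push the whole problem to the boundary: you write $\widehat{\Omega I}^*_{(k)}(X)$ as a genuine fibre product over $\Omega^*(Y)$, compare the resulting long exact sequence with that of Lemma \ref{HIMV}, and reduce by the five lemma to the single assertion that $\ft_{\geq k}\Omega^*_{\mathcal{MS}}(\Sigma)\hookrightarrow\hft_{\geq k}\Omega^*(Y)$ is a quasi-isomorphism, which you verify by splitting off the fibrewise harmonic summand $\bigoplus_{i\geq k}\calH^i(L)\otimes\Omega^*(\Sigma)$ on both sides and proving acyclicity of the two complements (for the hatted side via the spectral sequence of the $L$-degree filtration). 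The essential input --- Lemma \ref{hodgelem} and the commutation relations (\ref{useful}) --- is the same in both arguments, and your closing remark correctly isolates why the $\delL$-condition in fibre degree $k$ is indispensable: without it the degree-$k$ piece retains spurious exact classes. Your route is more structural and arguably cleaner; the paper's explicit manipulation of representatives has the advantage that the same computations are reused later (e.g.\ in comparing the intersection pairings). Two small points to tidy: the ladder of long exact sequences should be checked in cohomology, since at the chain level Lemma \ref{HIMV} uses $\Omega^*(M)$ and restriction to the slice $Y\times\{1/2\}$ rather than $\Omega^*(\olM)$ and $\inc^*$ (for forms in $\Omega I^*_{(k)}$ the two restrictions agree, as such forms are $x$-independent near the boundary); and the identification of your $E_1$-page uses the Hodge decomposition with smooth dependence on the $\Sigma$-parameter, which is precisely what Lemma \ref{hodgelem} supplies.
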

\begin{proof}
We first note that there is an inclusion of complexes
\[
{\Omega I}^*_{(k)}(X) \hookrightarrow \widehat{\Omega I}^*_{(k)}(X),
\]
which thus induces a map on cohomology.  We need to show this map is a bijection.
We start with injectivity.  Assume that $[\alpha] \in HI^j_{(k)}(X)$ and that $\alpha = d\hat{\beta}$ for 
$\hat{\beta} \in \widehat{\Omega I}^{j-1}_{(k)}(X)$.  Since the two complexes differ only by the structure of forms
on $N-\Sigma$, we restrict our consideration to this neighborhood.
Here we have 
\[\hat{\beta} = \hat{\beta}_t(x) + dx \wedge \hat{\beta}_n(x),
\]
where $x$ is the cone coordinate in $[0,1)$.  Let 
\[
\beta=\hat{\beta} - d\chi \int_0^x \hat{\beta}_n(t)\, dt,
\]
where $\chi$ is a smooth cutoff function on $M$ which is identically $=1$ on $N-\Sigma$.  Note that 
$\inc^*(\beta)=\inc^*(\hat{\beta})$, so $\beta \in \widehat{\Omega I}^{j-1}_{(k)}(X)$.  But additionally, 
$dx \lrcorner \, \beta =0$; that is, $\beta=\hat{\beta}_t(x) + dx \wedge 0$.  
Now from the fact that $\alpha = \pi_Y^*\eta$ for some $\eta \in \Omega^j_{\mathcal{MS}}(\Sigma)$
and $d\beta= d\hat{\beta}=\alpha$, we have on $N -\Sigma$ that
\[
\pi_Y^*\eta = d\beta(x) = d_Y\beta(x) + dx \wedge \beta'(x).
\]
Thus we must have $\beta'(x)=0$; that is, $\beta=\pi_Y^*\sigma$ for some $\sigma \in
(\hft_{\geq k}\Omega^* (Y))^{j-1}$.  Since $d$ commutes with pullbacks, we have $d_Y\sigma = \eta$.  
We need to show that we can further adapt $\hat{\beta}$ to a $\tilde{\beta} = \pi_Y^*\tilde{\sigma}$
where $\tilde{\sigma}$ is multiplicatively structured and in the right co-truncated complex.  When
we have this, we have completed the injectivity proof.

The injectivity of the K\"unneth isomorphism, $d_Y\sigma = \eta$, where $\eta$ is multiplicatively
structured, implies that $\sigma = d_Y \gamma + \sigma'$, where $\sigma'$ is multiplicatively 
structured.  Breaking this equation down by bidegree, we have
\begin{equation}\label{sigma1}
\sigma = d_Y \left(\gamma_{j-2,0}+ \cdots + \gamma_{k, j-2-k}\right) + \dL \gamma_{k-1,j-1-k}
+ \sigma'_{j-1,0}+ \cdots + \sigma'_{k, j-1-k}
\end{equation}
and
\[
0 = (-1)^{k-1} \dS \gamma_{k-1,j-1-k} + d_Y\left(\gamma_{k-2,j-k}+ \cdots + \gamma_{0,j-2}\right) \qquad \qquad
\]
\[\qquad \qquad+ \sigma'_{k-1, j-k} + \cdots + \sigma'_{0,j-1}.
\]
In particular, in Equation \ref{sigma1}, in bidegree $(k,j-1-k)$, we know that $\delL \sigma_{k,j-1-k}=0.$
This means that this equation will still hold if we eliminate $\gamma_{k-1,j-k-1}$ and replace 
$\sigma'_{k,j-1-k}$ with its $\delL$ + harmonic in $L$ components from the $L$ Hodge decomposition.  
Then we can additionally assume all of the terms in the second equation are 0.  This means 
we have written $\sigma = d_Y\gamma + \sigma'$, where 
$\sigma' \in (\ft_{\geq k}\Omega^*_{\mathcal{MS}}(\Sigma))^{j-1}$ and 
$\gamma \in (\hft_{\geq k}\Omega^* (Y))^{j-2}$.  Now 
\[
\tilde{\beta} := \beta - d\chi\pi_Y^*\gamma \in \Omega I^{j-1}_{(k)}(X)
\]
and $d\tilde{\beta}=\alpha$, so $0=[\alpha]\in HI^*_{(k)}(X)$, thus the map is injective.

\smallskip
Now consider surjectivity.  Let $[\hat{\beta}]\in \widehat{HI}^j_{(k)}(X)$.  Then on $N-\Sigma$, we again have
\[
\hat{\beta} = \hat{\beta}_t(x) + dx \wedge \hat{\beta}_n(x).
\]
As before, define 
\[
\beta=\hat{\beta} - d\chi \int_0^x \hat{\beta}_n(t)\, dt.
\]
Now by the same argument as above, $[\hat{\beta}] = [\beta]$, and on $N-\Sigma$, $\beta = \pi_Y^*\sigma$.
Further, $d\hat{\beta}=0$ implies that $d_Y \sigma =0$.  Using the Hodge decomposition on $Y$, we can write
$\sigma = d_Y \eta + \sigma'$, where $\sigma'$ is harmonic, and therefore multiplicatively
structured.  Now decomposing by bidegree and using orthogonality of the Hodge decomposition and 
the fact that $\sigma \in \hft_{\geq k}\Omega^{j}(Y)$, 
we get that $\sigma' \in (\ft_{\geq k}\Omega^*_{\mathcal{MS}}(\Sigma))^j$.  We also get that
$d_Y(\eta_{k-1,j-k} + \cdots + \eta_{0,j-1})=0$.  Thus we can without loss of generality assume these
terms are zero.  Finally, we have that $\delL\dS \eta_{k, j-k-1}= \delL (\sigma_{k, j-k} - \sigma'_{k, j-k})=0$.
Thus $\eta \in (\hft_{\geq k}\Omega^* (Y))^{j-1}$.  So let $\tilde{\beta} = \beta-d\chi\eta$.  Then
$[\tilde{\beta}] = [\hat{\beta}] \in \widehat{HI}^j_{(k)}(X)$ and $[\tilde{\beta}]$ is a class in 
$HI^j_{(k)}(X)$, so the map is surjective.
\end{proof}

\subsection{Proof of Theorem \ref{ihhi}}

Theorem \ref{ihhi} follows from a sequence of lemmas relating the spaces $IH_{(q)}^j(CT(X))$, $HI^j_{(j-q)}(X)$ and $IH_{(q+1)}^j(CT(X))$.
First we have the following lemma, which shows that there is a sequence of maps in each degree $j$:
\begin{lemma} For all $j$, there are well defined maps
\[
IH_{(q)}^j(CT(X)) \stackrel{A}{\longrightarrow} HI^j_{(j-q)}(X)\stackrel{B}{\longrightarrow} IH_{(q+1)}^j(CT(X))
\]
that factorise the standard map $S_{q,q+1}$ between intersection cohomology groups of adjacent perversities.
\end{lemma}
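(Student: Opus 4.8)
The plan is to realise both $A$ and $B$ as ``inclusion of representatives'' maps, computing $HI^*_{(j-q)}(X)$ by means of the alternative de Rham complex $\widehat{\Omega I}^*_{(j-q)}(X)$ of (\ref{dRHI}) (legitimate by Lemma \ref{hieq}), and $IH^*_{(q)}(CT(X))$, $IH^*_{(q+1)}(CT(X))$ by the complexes $I\Omega^*_{(q)}(CT(X))$, $I\Omega^*_{(q+1)}(CT(X))$ of (\ref{IHcomplex}). On the product boundary $Y = L \times \Sigma$ we index homogeneous components of forms by their bidegree $(L\text{-degree}, \Sigma\text{-degree})$. In total degree $j$ the three boundary conditions nest: $I\Omega^j_{(q)}(CT(X))$ requires $\inc^*\eta$ and $\inc^*(d\eta)$ to have $\Sigma$-degree $\le q-1$, equivalently $L$-degree $\ge j-q+1$; $\widehat{\Omega I}^j_{(j-q)}(X)$ requires $\inc^*\mu$ to have $L$-degree $\ge j-q$ with its $(j-q,q)$-component annihilated by $\delL$; and $I\Omega^j_{(q+1)}(CT(X))$ requires $\inc^*\mu$ and $\inc^*(d\mu)$ to have $\Sigma$-degree $\le q$, equivalently $L$-degree $\ge j-q$. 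So in degree $j$ the $\widehat{\Omega I}$-condition lies strictly between the two $I\Omega$-conditions, and the whole argument reduces to tracking the $(j-q,\cdot)$-components near $\partial\olM$.

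The map $B$ is formal. A degree-$j$ cocycle $\mu|_M$ of $\widehat{\Omega I}^*_{(j-q)}(X)$ has $\inc^*\mu$ of $L$-degree $\ge j-q$, hence of $\Sigma$-degree $\le q$, and $d\mu$ vanishes on $M$ and so, by density, on $\olM$, whence $\inc^*(d\mu)=0$; thus $\mu|_M$ is a cocycle of $I\Omega^*_{(q+1)}(CT(X))$. A degree-$(j-1)$ element of $\widehat{\Omega I}^*_{(j-q)}(X)$ likewise has $\inc^*$ of $\Sigma$-degree $\le q-1\le q$, so it lies in $I\Omega^{j-1}_{(q+1)}(CT(X))$. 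Hence sending $[\mu|_M]$ to its class in $IH^j_{(q+1)}(CT(X))$ is well defined; this is $B$.

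For $A$, observe first that a degree-$j$ cocycle $\eta|_M$ of $I\Omega^*_{(q)}(CT(X))$ has $\inc^*\eta$ of $L$-degree $\ge j-q+1$, so its $(j-q,q)$-component vanishes and the $\delL$-condition is vacuous; thus $\eta|_M$ is a cocycle of $\widehat{\Omega I}^*_{(j-q)}(X)$, and we set $A[\eta|_M]:=[\eta|_M]$. Well-definedness is the one nontrivial point. Suppose $\eta|_M = d(\xi|_M)$ with $\xi\in\Omega^{j-1}(\olM)$ defining an element of $I\Omega^{j-1}_{(q)}(CT(X))$; then $\inc^*\xi$ has $L$-degree $\ge j-q$, but its $(j-q,q-1)$-component $\xi_{j-q,q-1}$ need not be $\delL$-closed, so $\xi|_M$ need not lie in $\widehat{\Omega I}^{j-1}_{(j-q)}(X)$. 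I would repair this exactly as in the proof of Lemma \ref{hieq}. Since $\inc^*(d\xi)=\inc^*\eta$ has $L$-degree $\ge j-q+1$ and $d_Y=\dL\pm\dS$ raises or preserves $L$-degree, the $(j-q,q)$-component $\pm\dS\,\xi_{j-q,q-1}$ of $d_Y\inc^*\xi$ vanishes. Writing the $L$-Hodge decomposition of $\xi_{j-q,q-1}$ (Lemma \ref{hodgelem}) as $\dL a+\delL b+h$, with $a$ chosen in the image of $\delL$, one deduces from $\dS\,\xi_{j-q,q-1}=0$, the fact that $\dS$ preserves the decomposition, and the commutation rules (\ref{useful}), that $\dS a=0$. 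Then $\gamma:=a$, viewed as an element of $\Omega^{j-2}(Y)$ concentrated in bidegree $(j-q-1,q-1)$, satisfies $d_Y\gamma=\dL a$, and replacing $\xi$ by $\xi'':=\xi-d(\chi\,\pi_Y^*\gamma)$ for a collar cutoff $\chi$ that is $\equiv 1$ near $\partial\olM$ changes $\inc^*\xi$ only in its $(j-q,q-1)$-component, which becomes $\delL b+h$ and is $\delL$-closed. Hence $\xi''|_M\in\widehat{\Omega I}^{j-1}_{(j-q)}(X)$ and $d\xi''=\eta$ on $M$, so $[\eta|_M]=0$ in $HI^j_{(j-q)}(X)$; this shows $A$ is well defined.

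Finally, $A$ and $B$ both carry the class of a form to the class of the same form, so $B\circ A$ is the map $IH^j_{(q)}(CT(X))\to IH^j_{(q+1)}(CT(X))$ induced by the inclusion of complexes $S_{q,q+1}\colon I\Omega^*_{(q)}(CT(X))\hookrightarrow I\Omega^*_{(q+1)}(CT(X))$, i.e. the standard map between intersection cohomologies of adjacent perversities. The only real obstacle is the well-definedness of $A$; it is the familiar collar and Hodge-decomposition adjustment already carried out for Lemma \ref{hieq} (and relying on the K\"unneth statement, Theorem \ref{kunneth}). The construction of $B$ and the verification of the factorisation are purely formal.
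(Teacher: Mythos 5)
Your proposal is correct and follows essentially the same route as the paper: $B$ is the formal inclusion of representatives, $A$ requires the collar cut-off plus $L$-Hodge-decomposition correction of the bounding form (exactly the adjustment from Lemma \ref{hieq}), and the factorisation of $S_{q,q+1}$ is immediate because both maps fix representatives. No gaps.
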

\begin{proof}
First consider the map $A$.  Let $\alpha \in I\Omega_{(q)}^j(CT(X))$, $d\alpha=0$.  
Then by definition of this complex, $\inc^*\alpha \in (\ft_{<q} \Omega^* (Y))^j$.
We can decompose $\inc^*\alpha$ by $(L,\Sigma)$ bidegree to get
\[
\inc^* \alpha = \alpha_{j,0} + \cdots + \alpha_{j-q+1,q-1}.
\]
In particular, $\alpha_{j-q,q}=0$, which means that $\alpha \in  \widehat{\Omega I}^j_{(j-q)}(X)$.  
To show that this inclusion induces a map on cohomology, we need to know that if $\alpha= d\eta$ where 
$\eta \in I\Omega_{(q)}^{j-1}(CT(X))$, then we can find $\eta' \in \widehat{\Omega I}^{j-1}_{(j-q)}(X)$ 
so that $\alpha = d\eta'$, as well.  
Decomposing by bidegree, we get
\[
\inc^* \eta = \eta_{j-1,0} + \cdots + \eta_{j-q,q-1}, \qquad d_\Sigma \eta_{j-q,q-1}=0.
\]
Decompose $\eta_{j-q,q-1}$ by the $L$ Hodge decomposition: 
\[
\eta_{j-q,q-1} = \dL a_{j-q-1,q-1} + \delL a'_{j-q+1,q-1} + h_{j-1,q-1}.
\]
Because the $\Sigma$ Hodge decomposition commutes with this decomposition, we have that 
$\dS a_{j-q-1,q-1}=0$.  
So let $\eta' = \eta - d\chi a_{j-q-1,q-1}$, where $\chi$ is a smooth cutoff function supported on the end.  Then of course 
$d\eta'=\alpha$ still, and 
\begin{eqnarray*}
\inc^* \eta' &=& \inc^* \eta - d_Y a_{j-q-1,q-1}\\
&=& \eta_{j,0} + \cdots + (\eta_{j-q,q-1} - \dL a_{j-q-1,q-1}) \pm \dS a_{j-q-1,q-1}\\
&=& \eta_{j,0} + \cdots + (\eta_{j-q,q-1} - \dL a_{j-q-1,q-1}),
\end{eqnarray*}
where by construction, $\delL (\eta_{j-q,q-1} - d_L a_{j-q-1,q-1})=0$, so 
$\eta' \in \widehat{\Omega I}^{j-1}_{(j-q)}(X)$,
and the map $A$ is well-defined.

\medskip
Now consider the map $B$.  Suppose that $\beta \in \widehat{\Omega I}^{j}_{(j-q)}(X)$ and $d\beta =0$.  Then 
\begin{equation}\label{beta}
\inc^*\beta = \beta_{j,0} + \cdots + \beta_{j-q, q}, \qquad \delL \beta_{j-q,q}=0,
\end{equation}
and so decomposing $\inc^*d\beta$ by bidegree, we have:
\begin{eqnarray*}
0 &=& \inc^*d\beta = d_Y \inc^* \beta \\
&=& (\dL \beta_{j,0}) + (\dL \beta_{j-1,1} + (-1)^j \dS \beta_{j,0}) 
+ \cdots \pm (\dS \beta_{j-q,q}).
\end{eqnarray*}
Thus $\dS \beta_{j-q,q}=0$, so $\beta \in I\Omega_{(q+1)}^j(CT(X))$.  Now we need to show the map induced by inclusion
is well defined on cohomology. Assume $\beta = d\mu$
for $\mu \in \widehat{\Omega I}^{j-1}_{(j-q)}(X)$; then decomposing by bidegree again, we have by definition of
$\widehat{\Omega I}^{j-1}_{(j-q)}(X)$ that
\[
\inc^* \mu = \mu_{j-1,0}+ \cdots \mu_{j-q, q-1}, \qquad \delL \mu_{j-q,q-1}=0.
\]
In particular, $\mu_{j-q-1,q}=0$, so $\mu \in I\Omega_{(q+1)}^{j-1}(CT(X))$, so $B$ is well-defined.

\medskip
Finally, since on the form level, $A$ and $B$ are both given by inclusion of a closed form in the domain
complex into the range complex, their composition factorises the natural map $S_{q,q+1}$.
\end{proof}

Next we have three lemmas that show $A$ is injective, $B$ is surjective and Kernel($B) \subset$ Image($A$).  Together, 
these prove Theorem \ref{ihhi}.

\begin{lemma} The map $A$ is injective.
\end{lemma}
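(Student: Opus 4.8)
The plan is to show that if $\alpha \in I\Omega_{(q)}^j(CT(X))$ is closed and represents a class in $\ker A$, then $\alpha$ is already a coboundary in $I\Omega_{(q)}^j(CT(X))$. By definition of $A$ as the inclusion-induced map, $[\alpha] \in \ker A$ means there is $\hat\beta \in \widehat{\Omega I}^{j-1}_{(j-q)}(X)$ with $d\hat\beta = \alpha$. The issue is that $\hat\beta$ need not lie in the smaller complex $I\Omega_{(q)}^{j-1}(CT(X))$: its restriction to $Y$ is only alternatively fiberwise co-truncated (in $\hft_{\geq j-q}\Omega^*(Y)$), whereas membership in $I\Omega_{(q)}^{j-1}$ requires $\inc^*\hat\beta$ and $\inc^*(d\hat\beta)$ to be fiberwise \emph{truncated} along $\Sigma$ (in $\ft_{<q}\Omega^*(Y)$). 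So the task is to modify $\hat\beta$ within its cohomology class in $\widehat{\Omega I}^{j-1}_{(j-q)}(X)$ (which does not change $\alpha = d\hat\beta$) until it lands in $I\Omega_{(q)}^{j-1}(CT(X))$.

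First I would localize to the collar $N-\Sigma \cong Y \times (0,1)$ with cone coordinate $x$, exactly as in the proof of Lemma \ref{hieq}: write $\hat\beta = \hat\beta_t(x) + dx\wedge \hat\beta_n(x)$ near the boundary, and subtract $d\chi\int_0^x \hat\beta_n(t)\,dt$ (with $\chi$ a cutoff that is $1$ on $N-\Sigma$) to arrange that $\hat\beta$ has no $dx$-component there, so $\hat\beta|_{N-\Sigma} = \pi_Y^*\sigma$ for some $\sigma \in (\hft_{\geq j-q}\Omega^*(Y))^{j-1}$; this replacement stays in $\widehat{\Omega I}^{j-1}_{(j-q)}(X)$ because it does not change $\inc^*\hat\beta$. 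Since $d\hat\beta = \alpha$ and $\inc^*\alpha = \alpha_{j,0} + \cdots + \alpha_{j-q+1,q-1} \in \ft_{<q}\Omega^*(Y)$ (in particular the bidegree-$(j-q,q)$ piece vanishes), decomposing $d_Y\sigma$ by $(L,\Sigma)$-bidegree forces the top-$\Sigma$-degree components of $\sigma$ to be controlled. Concretely, $\sigma = \sigma_{j-1,0} + \cdots + \sigma_{j-q,q-1} + (\text{terms of }\Sigma\text{-degree}\geq q)$, and I would show using $d_Y\sigma = \eta := \inc^*\alpha$ together with the bidegree decomposition that the $\Sigma$-degree-$\geq q$ part of $\sigma$ and the component $\sigma_{j-q,q-1}$ can be absorbed. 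The key algebraic input is Lemma \ref{hodgelem}: decompose each $\sigma_{i,*}$ along the $L$-Hodge decomposition, noting $d_\Sigma$ commutes with $\dL, \delL$ and harmonic projection, and then peel off a $d_Y$-coboundary $d_Y\gamma = d(\pi_Y^*\gamma)$ to kill the unwanted $\dL$-exact pieces — precisely the maneuver in the injectivity argument of Lemma \ref{hieq}, applied one $\Sigma$-degree at a time starting from the top.

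After these modifications I will have $\tilde\beta = \hat\beta - d\chi\,\pi_Y^*\gamma$ (for a suitable $\gamma$ built from the $L$-Hodge pieces) with $d\tilde\beta = \alpha$ still, but now $\inc^*\tilde\beta \in \ft_{<q}\Omega^*(Y)$; and since $\inc^*(d\tilde\beta) = \inc^*\alpha \in \ft_{<q}\Omega^*(Y)$ as well, this places $\tilde\beta \in I\Omega_{(q)}^{j-1}(CT(X))$. Hence $[\alpha] = 0$ in $IH_{(q)}^j(CT(X))$, proving injectivity. The main obstacle I anticipate is bookkeeping the bidegree cascade cleanly: one must simultaneously satisfy the closure constraint $d_\Sigma\sigma_{j-q,q-1} = 0$ coming from $\sigma \in \hft_{\geq j-q}$, the equation $d_Y\sigma = \eta$ with $\eta$ fiberwise truncated, and the requirement that the correcting form $\gamma$ itself be alternatively co-truncated so that $\tilde\beta$ stays in the ambient complex at each stage — an inductive downward sweep through $\Sigma$-degree, each step an instance of the Hodge-theoretic splitting already used in Lemma \ref{hieq}, but the interplay of the three constraints is where care is needed.
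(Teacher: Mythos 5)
Your plan rests on a false premise, and the difficulty you set out to overcome does not exist. You claim that $\inc^*\hat\beta$, being only alternatively co-truncated, need not be fiberwise truncated, and you write $\sigma = \sigma_{j-1,0} + \cdots + \sigma_{j-q,q-1} + (\text{terms of }\Sigma\text{-degree}\geq q)$. But by definition $(\hft_{\geq j-q}\Omega^* (Y))^{j-1}$ consists of sums $\sum_{i=j-q}^{l}\beta_{i,(j-1)-i}$ with $\delL\beta_{j-q,q-1}=0$: since every component has $L$-degree at least $j-q$ and total degree $j-1$, every component has $\Sigma$-degree at most $q-1$. There are no terms of $\Sigma$-degree $\geq q$ to absorb, and the component $\beta_{j-q,q-1}$ is already admissible for $\ft_{<q}$. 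Hence $\inc^*\hat\beta \in (\ft_{<q}\Omega^*(Y))^{j-1}$ automatically, and $\inc^*(d\hat\beta)=\inc^*\alpha \in (\ft_{<q}\Omega^*(Y))^{j}$ because $\alpha \in I\Omega^j_{(q)}(CT(X))$ by hypothesis. These are precisely the two conditions defining $I\Omega^{j-1}_{(q)}(CT(X))$, so $\hat\beta$ itself already witnesses $[\alpha]=0$ in $IH^j_{(q)}(CT(X))$. That three-line bidegree count is the paper's entire proof; no removal of the $dx$-component, no Hodge decomposition, and no downward sweep are needed.

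Beyond this, what you have written is a plan rather than a proof: the steps ``I would show \dots can be absorbed'' are never executed, and the ``bidegree cascade'' whose bookkeeping you flag as the main obstacle is vacuous. The asymmetry you were reaching for lives one degree up: in degree $j$ the space $(\hft_{\geq j-q}\Omega^*(Y))^{j}$ does contain a component of $\Sigma$-degree exactly $q$, which is why $A$ and $B$ are nontrivial maps between genuinely different complexes. The primitive $\hat\beta$, however, sits in degree $j-1$, where the inclusion of $\hft_{\geq j-q}$ into $\ft_{<q}$ holds on the nose.
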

\begin{proof}
Suppose that $A[\alpha]=0$, that is, $\alpha \in I\Omega_{(q)}^j(CT(X))$, $d\alpha=0$ and $\alpha = d\beta$ for 
$\beta \in \widehat{\Omega I}^{j-1}_{(j-q)}(X)$.  Then decomposing by bidegree,
\[
\inc^*\beta = \beta_{j-1,0} + \cdots + \beta_{j-q, q-1}, \qquad \delL \beta_{j-q,q-1}=0.
\]
Because the degree in $\Sigma$ is $\leq q-1$ for all pieces, $\inc^*\beta \in (\ft_{<q}\Omega^* (Y))^{j-1}$.
Also, by hypothesis, $d\beta = \alpha$ where $\inc^*\alpha \in (\ft_{<q}\Omega^* (Y))^j$, so 
$\beta \in I\Omega_{(q)}^{j-1}(CT(X))$, and $0=[\alpha] \in IH_{(q)}^j(CT(X))$.  Thus $A$ is injective.
\end{proof}

\begin{lemma} The map $B$ is surjective.
\end{lemma}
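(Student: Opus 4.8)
The plan is to prove surjectivity by a direct correction argument on the form level, in the same spirit as the well‑definedness proof for $A$ and the proof of Lemma~\ref{hieq}. Starting from an arbitrary closed $\gamma\in I\Omega^j_{(q+1)}(CT(X))$, I want to subtract an exact form $d(\chi\,\pi_Y^*a)$ supported in the collar of $\del\olM$ so as to land inside $\widehat{\Omega I}^j_{(j-q)}(X)$ without leaving $I\Omega^*_{(q+1)}(CT(X))$; since $B$ is induced by inclusion of closed forms, this exhibits $[\gamma]$ in the image of $B$.

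First I would record the shape of $\inc^*\gamma$. Writing $\gamma=\eta|_M$ with $\eta\in\Omega^*(\olM)$ as in the definition of the complex, closedness of $\gamma$ forces $d\eta=0$ on all of $\olM$ (it vanishes on the dense set $M$ and is smooth), hence $d_Y\inc^*\eta=\inc^*(d\eta)=0$. Since $\inc^*\eta\in(\ft_{<q+1}\Omega^*(Y))^j$, its $(L,\Sigma)$‑bidegree decomposition $\inc^*\eta=\gamma_{j,0}+\cdots+\gamma_{k,q}$ with $k=j-q$ has all components of $L$‑degree $\geq k$; thus $\inc^*\eta$ already meets every requirement for membership in $(\hft_{\geq k}\Omega^*(Y))^j$ except possibly the condition $\delL\gamma_{k,q}=0$ on the bottom piece. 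Comparing bidegree‑$(k,q+1)$ components in $d_Y\inc^*\eta=0$ (the only contribution there is $\pm\dS\gamma_{k,q}$) yields $\dS\gamma_{k,q}=0$, which is the sole structural fact I need about $\gamma_{k,q}$.

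Next I would invoke the double Hodge decomposition following Lemma~\ref{hodgelem}: write $\gamma_{k,q}=\dL a+\delL b+h$, the fiberwise (along $\Sigma$) $L$‑Hodge decomposition, with $h\in\calH^k(L)\otimes\Omega^q(\Sigma)$ and with $a$ chosen coexact along $L$. The step I expect to be the main obstacle is verifying that the correction term $\pi_Y^*a$ genuinely lies in $I\Omega^*_{(q+1)}(CT(X))$, i.e. that $d_Ya$ does not acquire a component of $\Sigma$‑degree $q+1$; equivalently, that $\dS a=0$. This follows because $\dS$ commutes with $\dL$, $\delL$ and the $L$‑harmonic projection by (\ref{useful}), so from $\dS\gamma_{k,q}=0$ and uniqueness of the decomposition one gets $\dS(\dL a)=0$, and a form that is simultaneously coexact and closed along $L$ must vanish; hence $\dS a=0$. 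This is precisely the mechanism already exploited in the well‑definedness argument for $A$.

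With this in hand the argument closes routinely. Choosing a cutoff $\chi$ on $\olM$ equal to $1$ near $\del\olM$ and supported in the collar $N-\Sigma\cong Y\times(0,1)$, the form $\chi\,\pi_Y^*a$ lies in $I\Omega^{j-1}_{(q+1)}(CT(X))$ because $\inc^*(\chi\,\pi_Y^*a)=a$ and, near $\del\olM$, $d(\chi\,\pi_Y^*a)=\pi_Y^*(d_Ya)=\pi_Y^*(\dL a)$, both of $\Sigma$‑degree $q$. Therefore $\gamma':=\gamma-d(\chi\,\pi_Y^*a)$ is closed, stays in $I\Omega^*_{(q+1)}(CT(X))$, and is cohomologous there to $\gamma$. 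On the other hand $\inc^*\gamma'=\inc^*\eta-\dL a$ has $L$‑degrees $\geq k$ and bottom component $\gamma_{k,q}-\dL a=\delL b+h$, which is $\delL$‑closed, so $\gamma'\in\widehat{\Omega I}^j_{(j-q)}(X)$. Hence $[\gamma']\in HI^j_{(j-q)}(X)$ and $B[\gamma']=[\gamma]$ in $IH^j_{(q+1)}(CT(X))$. Finally I would dispose of the degenerate ranges ($j-q\leq 0$, where the bottom piece is absent, and $j-q>l$, where the $\hft$‑condition is vacuous), in which the correction is the identity and the statement reduces to the identifications of $HI^*_{(k)}(X)$ and $IH^*_{(q+1)}(CT(X))$ with absolute or relative de~Rham cohomology of $\olM$ established earlier.
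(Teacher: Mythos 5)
Your proposal is correct and follows essentially the same route as the paper's proof: decompose $\inc^*\gamma$ by bidegree, deduce $\dS\gamma_{j-q,q}=0$ from closedness, apply the fiberwise $L$-Hodge decomposition to the bottom component, arrange $\dS a=0$ via the double Hodge decomposition, and correct $\gamma$ by $d(\chi\,\pi_Y^*a)$ to land in $\widehat{\Omega I}^{j}_{(j-q)}(X)$ without changing the class in $IH^j_{(q+1)}(CT(X))$. The only difference is that you spell out the justification for $\dS a=0$ (coexact plus $\dL$-closed forces vanishing), which the paper leaves implicit.
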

\begin{proof}
Suppose that $[\gamma] \in IH_{(q+1)}^j(CT(X))$.  Then decomposing by bidegree, we have
\[
\inc^*\gamma = \gamma_{j,0} + \cdots + \gamma_{j-q, q}.
\]
Since $d\gamma=0$, we get that $\dS \gamma_{j-q, q}=0$.  Decompose $\gamma_{j-q, q}$ according
to the $L$ Hodge decomposition: 
\[
\gamma_{j-q, q}= \dL a_{j-q-1,q} + \delL b_{j-q+1,q} + c_{j-q,q}.
\]
Using the double Hodge decomposition, we can assume $a_{j-q-1,q}$ is in the kernel of $\dS$.

Then let $\gamma' = \gamma - d \chi a_{j-q-1,q}$, where as before, $\chi$ is a smooth cutoff
function supported near the end.  Note that 
$\inc^*(\chi a_{j-q-1,q}) = a_{j-q-1,q} \in (\ft_{<q+1}\Omega^* (Y))^{j-1}$
and 
\[
\inc^*(d\chi a_{j-q-1,q})= d_Y a_{j-q-1,q} = \dL a_{j-q-1,q} \in (\ft_{<q+1}\Omega^* (Y))^j.
\]
Thus $[\gamma' ]=[\gamma] \in IH_{(q+1)}^j(CT(X))$.  Further, 
\[
\inc^* \gamma' = \gamma_{j,0} + \cdots + (\gamma_{j-q, q} - \dL a_{j-q-1,q}),
\]
so $\gamma' \in \widehat{\Omega I}^{j}_{(j-q)}(X)$, and $[\gamma] =B[\gamma']$, so $B$ is surjective.
\end{proof}

\begin{lemma} The kernel of $B$ is contained in the image of $A$.
\end{lemma}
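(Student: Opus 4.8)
The plan is to show that any class in $\ker B$ is represented, after subtracting a coboundary in $\widehat{\Omega I}^*_{(j-q)}(X)$, by a closed form already lying in $I\Omega^*_{(q)}(CT(X))$; since $A$ is induced by the inclusion of complexes $I\Omega^*_{(q)}(CT(X))\hookrightarrow\widehat{\Omega I}^*_{(j-q)}(X)$, such a class then lies in $\im A$. So take $[\beta]\in\widehat{HI}^j_{(j-q)}(X)$ with $B[\beta]=0$: then $\beta\in\widehat{\Omega I}^j_{(j-q)}(X)$ is closed and, regarded (via the preceding lemma) as an element of $I\Omega^j_{(q+1)}(CT(X))$, equals $d\mu$ for some $\mu\in I\Omega^{j-1}_{(q+1)}(CT(X))$. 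As in the earlier lemmas, all the modification can be carried out in a collar of $\partial\olM$. Writing boundary restrictions in $(L,\Sigma)$-bidegree, $\inc^*\beta=\sum_{i\ge j-q}\beta_{i,j-i}$ with $\delL\beta_{j-q,q}=0$, while $\inc^*\mu$ (of $\Sigma$-degree $\le q$) has in particular components $\mu_{j-q-1,q}$ and $\mu_{j-q,q-1}$. The single obstruction to $\beta$ already lying in $I\Omega^j_{(q)}(CT(X))$ is the bottom piece $\beta_{j-q,q}$, so the goal is to cancel it by a coboundary $d\gamma$ with $\gamma\in\widehat{\Omega I}^{j-1}_{(j-q)}(X)$.

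The key step is a bidegree computation. Restricting $d\mu=\beta$ to $\partial\olM$ and reading off bidegree $(j-q,q)$ gives $\beta_{j-q,q}=\dL\mu_{j-q-1,q}+(-1)^{j-q}\dS\mu_{j-q,q-1}$. I would then apply the double Hodge decomposition of Lemma \ref{hodgelem} to $\mu_{j-q,q-1}$, splitting off its $\dL$-exact summand; since $\dS$ preserves that decomposition (Lemma \ref{hodgelem}) and since $\delL\beta_{j-q,q}=0$ forces the $\dL$-exact part of $\beta_{j-q,q}$ to vanish, comparing $\dL$-exact parts in the displayed identity leaves $\beta_{j-q,q}=(-1)^{j-q}\dS\nu$, where $\nu\in\Omega^{j-q,q-1}(Y)$ is the $\delL$-closed part of $\mu_{j-q,q-1}$; in particular $\delL\nu=0$. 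Extended by zero in all other bidegrees, $\nu$ then lies in $(\hft_{\ge j-q}\Omega^*(Y))^{j-1}$, so for a collar cutoff $\chi$ with $\chi\equiv 1$ near $\partial\olM$ we may form $\gamma:=(\chi\,\pi_Y^*\nu)|_M\in\widehat{\Omega I}^{j-1}_{(j-q)}(X)$. Near $\partial\olM$ one has $\inc^*(d\gamma)=d_Y\nu=\dL\nu+(-1)^{j-q}\dS\nu$, whose bidegree $(j-q,q)$ component is exactly $\beta_{j-q,q}$ and whose other component sits in $\Sigma$-degree $q-1$.

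It then remains to check that $\alpha:=\beta-d\gamma$ does the job: it is closed, extends smoothly to $\olM$, and $\inc^*\alpha=\sum_{i>j-q}\beta_{i,j-i}-\dL\nu$ has all its summands in $\Sigma$-degree $\le q-1$, so $\alpha\in I\Omega^j_{(q)}(CT(X))$; hence $A[\alpha]=[\beta-d\gamma]=[\beta]$ in $\widehat{HI}^j_{(j-q)}(X)\cong HI^j_{(j-q)}(X)$ by Lemma \ref{hieq}, giving $[\beta]\in\im A$. Combined with injectivity of $A$ and surjectivity of $B$ from the two previous lemmas, this proves Theorem \ref{ihhi}. I expect the genuinely delicate part to be the bookkeeping of the two different fibre operations — co-truncation $\hft_{\ge j-q}$ along $L$ for $\widehat{\Omega I}$ against truncation $\ft_{<q}$ along $\Sigma$ for $I\Omega$ — and in particular verifying that the Hodge pieces producing $\nu$ are simultaneously adapted to both, which is precisely why Lemma \ref{hodgelem} (commutation of $\dS$ with the $L$-Hodge decomposition) is needed; the degenerate ranges $j-q\le 0$ and $j-q>l$ go through with the same formulas, with $\mu_{j-q-1,q}$ (resp.\ $\beta_{j-q,q}$) simply absent.
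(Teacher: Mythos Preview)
Your proof is correct and follows essentially the same route as the paper's: both arguments read off the bidegree-$(j-q,q)$ component of $\inc^*(d\mu)=\inc^*\beta$, apply the $L$-Hodge decomposition to $\mu_{j-q,q-1}$, use $\delL\beta_{j-q,q}=0$ to kill the $\dL$-exact contributions, and then subtract $d(\chi\,\pi_Y^*\nu)$ with $\nu=\delL b+c$ the $\delL$-closed part of $\mu_{j-q,q-1}$ to push the representative into $I\Omega^j_{(q)}(CT(X))$. The only cosmetic difference is that the paper additionally records $\dS\beta_{j-q,q}=0$ and $\dS\mu_{j-q-1,q}=0$, neither of which is actually used in the correction step, so your omitting them does no harm.
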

\begin{proof}
Assume that $B[\beta]=0,$ that is, $\beta \in \widehat{\Omega I}^{j}_{(j-q)}(X)$ and $\beta = d\gamma$ for 
$\gamma \in I\Omega_{(q+1)}^{j-1}(CT(X))$.  Then decomposing $\inc^*\beta$ by bidegree as in Equation \ref{beta} and 
using the fact that $d\beta=0$,
we get that $\dS \beta_{k-q,q}=0= \delL \beta_{k-q,q}$.

Now decomposing $\inc^* \gamma$ and $\inc^*d\gamma$ by bidegree, we get that 
\[
\inc^* \gamma = \gamma_{j-1,0} + \cdots + \gamma_{j-1-q,q},
\]
\begin{eqnarray*}
\inc^* d\gamma &=& \left(\dL \gamma_{j-1,0}\right) + \left((-1)^{j-1}\dS \gamma_{j-1,0} + \dL \gamma_{j-2,1}\right)+ \cdots\\
&+& \left((-1)^{j-q}\dS \gamma_{j-q,q-1} + \dL \gamma_{j-1-q,q}\right) +\left((-1)^{j-q-1} \dS\gamma_{j-1-q,q}\right).
\end{eqnarray*}
Thus $\dS\gamma_{j-1-q,q}=0$ and 
\begin{equation}\label{gam}
(-1)^{j-q}\dS \gamma_{j-q,q-1} + \dL \gamma_{j-1-q,q}=\beta_{k-q,q}.
\end{equation}
Decompose $\gamma_{j-q,q-1}$ by the Hodge decomposition in $L$:
\[
\gamma_{j-q,q-1} = \dL a_{j-q-1,q-1} + \delL b_{j-q+1,q-1} + c_{j-q,q-1}.
\]
Then recalling that $\delL \beta_{k-q,q}=0$ and applying the Hodge decomposition in $L$ to
all of Equation \ref{gam}, we get
\[
\beta_{k-q,q} = (-1)^{j-q}\dS \left(\delL b_{j-q+1,q-1} + c_{j-q,q-1}\right).
\]
Let 
\[\beta' := \beta - d\chi (\delL b_{j-q+1,q-1} + c_{j-q,q-1}).
\]
Note that $(\delL b_{j-q+1,q-1} + c_{j-q,q-1}) \in \hft_{\geq j-q} \Omega^*(Y)$, so 
$[\beta'] = [\beta] \in HI^j_{(j-q)}(X)$.
But 
\begin{eqnarray*}
\inc^* \beta' &=& \inc^*\beta - d_Y(\delL b_{j-q+1,q-1} + c_{j-q,q-1}) \\
&=& \beta_{j,0} + \cdots + \beta_{j-q+1,q-1} - \dL \delL b_{j-q+1,q-1},
\end{eqnarray*}
so $\beta' \in I\Omega_{(q)}^{j}(CT(X))$.  Thus $[\beta] = A[\beta']$.
\end{proof}

\section{The Hodge Theorem for $HI$}
\label{sec.hodgehi}

Our Hodge theorem relates to the spaces of extended weighted $L^2$ harmonic forms over $M=X -\Sigma$ with respect to the various metrics we consider.  
A weighted $L^2$ space for any metric $g$ on $M$
is a space of forms:
\[
x^cL^2_{g}\Omega_{g}^*(M) := \{ \omega \in \Omega_{}^*(M) \mid
\int_M ||x^{-c} \omega||_{g}^2 {\rm dvol}_{g} < \infty\}.
\]
Here $||\,  \cdot \, ||_{g}$ is the pointwise metric on the space of differential forms over $M$
induced by the metric on $M$.
The space $x^cL^2_{g}\Omega_{g}^*(M)$ can be completed to a Hilbert space with respect to the inner product
\[
\langle\alpha, \beta\rangle_c := \int_M \alpha \wedge x^{-2c}*_{g} \beta.
\]

Let $d$ represent the de Rham differential 
on smooth forms over $M$ and $\delta_{g,c}$ represent its formal adjoint with respect to the $x^cL^2$ inner
product induced by the metric $g$.  Then $D_{g,c}:= d + \delta_{g,c}$ is an elliptic differential operator on 
the space of smooth forms over $M$.  If $c=0$, the elements of the kernel of $D_{g,0}$ that lie in $L^2$ are the standard 
space of $L^2$ harmonic forms over $(M,g)$.  More generally, we denote:
\[
\mathcal{H}^j_{L^2}(M, g,c):= \left\{\omega \in  x^cL^2_{g}\Omega^j_{g}(M) \mid D_{g,c} \omega =0 \right\}.
\] 
\begin{defn} \label{def.extforms}
The space of {\em extended} 
$x^cL^2$ harmonic forms on $(M,g)$ is
\[
\mathcal{H}_{ext}^*\left(M, g, c \right) := \bigcap_{\epsilon>0} \{ \omega \mid  \omega \in x^{c-\epsilon}L^2_{g} \Omega_{g}^*(M), \, D_{g,c} \omega=0\}.
\]
\end{defn}

\subsection{Proof of Theorem \ref{hodge}}
The space 
$IG_{(q)}^j(W)$ arises in extended Hodge theory for manifolds with fibred cusp metrics, and this allows us to
prove Theorem \ref{hodge}.
First, Theorem 1.2 from \cite{Hu3} may be rephrased as:
\begin{thm}\cite{Hu3}\label{ihthm}
Let $(M, g_{fc})$ be the interior of a manifold
with boundary $\overline{M}$ and boundary defining function $x$.  Assume that $\phi:\partial \overline{M} \to B$ is a 
fibre bundle that is flat with respect to the structure group ${\rm Isom}(F,g_F)$ for a fixed metric $g_F$ on the 
fibres of $\phi$.  Let $\hat{M}$ denote the compactification of $M$ obtained by collapsing the fibres of $\phi$ at $\partial\overline{M}$.
Endow $M$ with a geometrically flat fibred cusp-metric for the fibration $\phi$. 
Then 
\[
\mathcal{H}^j_{ext}(M, g_{fc},c) \cong IG_{((f/2)+1-c)}^j (\hat{M}),  \]
where $f=\dim F$.
\end{thm}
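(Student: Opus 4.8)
The plan is to prove the isomorphism analytically, by analyzing the Hodge--de Rham operator $D_{g_{fc},c} = d + \delta_{g_{fc},c}$ in the fibred cusp ($\Phi$-)pseudodifferential calculus and matching the weighted $L^2$-cohomology it computes with the intersection cohomology of $\hat M$; the analytic framework is that of Mazzeo--Melrose, used for Hodge theory by Hausel--Hunsicker--Mazzeo (cf.\ \cite{HHM}), and the extended harmonic forms are those of Chapter 6.4 of \cite{meaps}. First I would establish that $D_{g_{fc},c}$ is Fredholm between the appropriate weighted fibred cusp Sobolev spaces for all weights $c$ outside a discrete exceptional set $\mathcal{E}$, by computing its indicial (normal) family along $\partial\overline{M}$ and running the standard parametrix construction; the Hodge--Kodaira decomposition then identifies, for $c\notin\mathcal{E}$, $\mathcal{H}^j_{L^2}(M,g_{fc},c)$ with the weighted $L^2$-cohomology $H^j_{(2)}(M,g_{fc},x^c)$ (reduced and unreduced coinciding), while for an exceptional weight $c_0$ the space $\mathcal{H}^j_{ext}(M,g_{fc},c_0)$ is controlled, as in \cite{meaps}, by the finitely many indicial roots of real part $c_0$ and fits into a short exact sequence relating it to $H^j_{(2)}(M,g_{fc},x^{c_0\pm\epsilon})$ for small $\epsilon>0$. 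The crucial algebraic observation is that this exact sequence has exactly the shape of the definition $IG^j_{(q)}(W) = \bigl(IH^j_{(q-1)}(W)\oplus IH^j_{(q)}(W)\bigr)/\operatorname{Im}\bigl(IH^j_{(q-1)}(W)\to IH^j_{(q)}(W)\bigr)$, so the theorem reduces to the non-exceptional statement $H^j_{(2)}(M,g_{fc},x^c)\cong IH^j_{(q)}(\hat M)$, with $q=\lfloor f/2+1-c\rfloor$, the two values straddling $c_0$ supplying exactly the two intersection cohomology groups in the $IG$.

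For the non-exceptional statement I would pass to a local model near $B$. Geometric flatness of $\phi$ with structure group $\operatorname{Isom}(F,g_F)$ means $\partial\overline{M}$ is covered by bundle charts over which a collar neighbourhood looks like $(0,\epsilon)_x\times U\times F$, $U\subset B$, with the product metric $\tfrac{dx^2}{x^2}+ds^2_U+x^2 g_F$. There I separate variables in the fibre, expanding forms along the spectral decomposition of the fibre Hodge--Laplacian $\Delta_F$: on the positive spectrum $\Delta_F$ is invertible uniformly as $x\to 0$, so that part is acyclic for weighted $L^2$-cohomology, and the computation reduces to a finite model built from $\mathcal{H}^\bullet(F)\otimes\Omega^\bullet(U)$ with a one-parameter dependence on $x$. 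A short norm computation ($|\pi_F^*\omega_F|_{g_{fc}} = x^{-i}|\omega_F|_{g_F}$ for a fibre $i$-form, $\mathrm{dvol}_{g_{fc}} = x^{f-1}\,dx\wedge\mathrm{dvol}_U\wedge\mathrm{dvol}_{g_F}$) shows that a closed representative of fibre degree $i$ lies in $x^c L^2$ exactly when its leading power of $x$ exceeds $i-f/2+c$; thus the fibre degrees surviving in genuine $L^2$ are $i<f/2-c$, while the threshold power $x^{i-f/2+c}$ (an admissible integer power precisely when $c-f/2\in\mathbb{Z}$, i.e.\ at an exceptional weight) produces an \emph{extended} harmonic form at fibre degree $i=f/2-c$, which is why the cutoff for extended forms is $i<f/2+1-c$. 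This reproduces the cone Poincar\'e lemma (\ref{ihpoincare}) and identifies the local weighted $L^2$-cohomology near $\partial\overline{M}$ with the local intersection cohomology of $\hat M$ at $B$.

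To globalize, I would cover $M$ by the collar $\mathcal{C}\cong(0,\epsilon)\times\partial\overline{M}$ and the complement of a smaller collar, whose intersection retracts onto $\partial\overline{M}$, where weighted $L^2$-forms are ordinary forms. Once the cutoff estimates and the local vanishing of the preceding step are in hand, weighted $L^2$-cohomology satisfies a Mayer--Vietoris sequence for this cover, as does the intersection cohomology of $\hat M$ in its de Rham formulation (the analogue of (\ref{equ.ihmvgeneric}); see Section \ref{sec.ih}); the interior term is $H^*(\overline{M})$ in both, and the collar terms match by the local model of the preceding step. The five-lemma yields $H^j_{(2)}(M,g_{fc},x^c)\cong IH^j_{(q(c))}(\hat M)$ for $c\notin\mathcal{E}$, and substituting this into the exact sequence of the first step produces the asserted $\mathcal{H}^j_{ext}(M,g_{fc},c)\cong IG^j_{(f/2+1-c)}(\hat M)$ at the exceptional weights as well.

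I expect the main obstacle to be the analytic content of the first step. The fibred cusp geometry is genuinely degenerate, the fibres $F$ collapsing to points at the boundary, so one must work in the $\Phi$-calculus, where the indicial family is itself an operator on the boundary fibration built from $\Delta_F$ whose spectrum --- and hence the location of $\mathcal{E}$ and of the indicial roots --- varies with the base point; proving Fredholmness of $D_{g_{fc},c}$ and, above all, extracting from the $b$-type end analysis of \cite{meaps} the \emph{precise} short exact sequence that ties $\mathcal{H}^j_{ext}$ at an exceptional weight to the weighted $L^2$-cohomologies on either side, term by term so that it matches the definition of $IG$ literally, is the delicate part. By contrast, the flat local computation and the Mayer--Vietoris comparison are routine bookkeeping once that analytic core is secured.
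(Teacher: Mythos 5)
This theorem is not proved in the present paper at all: it is quoted (as a rephrasing of Theorem 1.2) from the second author's paper \cite{Hu3}, so there is no internal proof to compare with, and your sketch has to be measured against the argument of \cite{Hu3}. Your outline does follow the same general analytic strategy as that source and its predecessor \cite{HHM} (Fredholm theory for the Hodge--de Rham operator of the fibred cusp metric at non-exceptional weights, identification of the resulting weighted $L^2$ cohomology with $IH^*_{(q)}(\hat M)$ via a local model and Mayer--Vietoris, then a separate analysis at the threshold weight), and your weight count locating the cutoff fibre degrees $i<f/2-c$ for $L^2$ and $i<f/2+1-c$ for extended forms is the correct numerology.

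The genuine gap is the exceptional-weight step, which is where the entire content of the theorem lies. You assert that $\mathcal{H}^j_{ext}(M,g_{fc},c_0)$ ``fits into a short exact sequence'' with the weighted $L^2$ cohomologies at $c_0\pm\epsilon$ and that this sequence ``has exactly the shape of'' $IG$; no such sequence is produced, and even granting one it would not yield the theorem, because $\dim IG^j_{(q)}=\dim IH^j_{(q-1)}+\dim IH^j_{(q)}-\operatorname{rk}\bigl(IH^j_{(q-1)}\to IH^j_{(q)}\bigr)$ depends on the rank of the canonical comparison map, so one must in addition prove naturality (that the map induced by inclusion of weighted complexes is, under the non-exceptional isomorphisms, this canonical map) and identify exactly which parts of the flanking groups are realized by extended harmonic forms. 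In \cite{Hu3} this is not done by comparing the two flanking weights: the key device is the boundary-data map $r=\lim_{s\to 0}\Pi_{q-1,q}\circ i_s^*$ and the long exact sequence recorded here as Lemma \ref{lemma6.3}, which show that the honestly $L^2$ harmonic forms at the threshold realize $\operatorname{Im}\bigl(IH^j_{(q-1)}\to IH^j_{(q)}\bigr)$ while the leading terms of extended harmonic forms in the threshold fibre degree contribute the kernel and cokernel pieces, whose sum has the dimension of $IG^j_{(q)}$. Deferring this to Chapter 6.4 of \cite{meaps} is not sufficient either: those results are for $b$-type ends, whereas in the fibred cusp setting the indicial family is a family of operators over the boundary built from the fibre Laplacian, and it is precisely the geometric flatness hypothesis (making the fibre harmonic forms a flat bundle over the base) together with the threshold analysis of \cite{Hu3} that closes this step; your second and third steps (local model and Mayer--Vietoris globalization) are routine only once that core is in place.
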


\begin{cor}\label{fbmetrics}
Under the conditions of Theorem \ref{ihthm}, if $g_{fb} = x^{-2}g_{fc}$ is the fibred boundary metric conformal to $g_{fc}$,
then 
\begin{equation}
\label{extforms}
\mathcal{H}_{ext}^{j}(M, g_{fb}, c) 
= \mathcal{H}_{ext}^{j}(M, g_{fc}, c+(n/2)-j) \cong
IG^j_{(q)}(\hat{M}),
\end{equation}
where $q=j-\frac{b-1}{2}-c$.
\end{cor}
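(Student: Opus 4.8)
The plan is to obtain the corollary from Theorem \ref{ihthm} by tracking how the relevant objects transform under the conformal change $g_{fb} = x^{-2}g_{fc}$, and then rewriting the resulting index on $IG$.

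First I would record the elementary conformal rescaling identities: if $g' = \rho^2 g$ on an $n$-manifold, then $*_{g'} = \rho^{\,n-2k}\,*_g$ on $k$-forms and ${\rm dvol}_{g'} = \rho^{\,n}{\rm dvol}_g$. Applying these with $\rho = x^{-1}$, i.e. $g' = g_{fb}$ and $g = g_{fc}$, gives for every $\omega \in \Omega^j(M)$
\[
\| x^{-c}\omega\|_{g_{fb}}^2\,{\rm dvol}_{g_{fb}} = x^{-2c}\cdot x^{2j}\|\omega\|_{g_{fc}}^2\cdot x^{-n}\,{\rm dvol}_{g_{fc}} = \| x^{-(c+(n/2)-j)}\omega\|_{g_{fc}}^2\,{\rm dvol}_{g_{fc}},
\]
so that $x^c L^2_{g_{fb}}\Omega^j_{g_{fb}}(M) = x^{c+(n/2)-j}L^2_{g_{fc}}\Omega^j_{g_{fc}}(M)$ as spaces of forms, with identical norms; equivalently, on $j$-forms the inner product $\langle\,\cdot\,,\,\cdot\,\rangle_c$ of $g_{fb}$ coincides with the inner product $\langle\,\cdot\,,\,\cdot\,\rangle_{c+(n/2)-j}$ of $g_{fc}$. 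This degree-dependent weight shift is the only bookkeeping in the argument.

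Next I would check that the harmonicity condition transfers. For $\omega$ of pure degree $j$ one has $D_{g,c}\omega = 0$ if and only if $d\omega = 0$ and $\delta_{g,c}\omega = 0$, since $d\omega$ and $\delta_{g,c}\omega$ sit in complementary degrees $j+1$ and $j-1$. The equation $d\omega = 0$ is metric-independent, while $\delta_{g,c}\omega = 0$ is equivalent to $\langle d\alpha,\omega\rangle_c = 0$ for all compactly supported $\alpha \in \Omega^{j-1}(M)$ (apply the fundamental lemma of the calculus of variations to the differential operator $\delta_{g,c}$). In this last pairing both $d\alpha$ and $\omega$ lie in $\Omega^j$, so by the previous step it is the same whether computed with $(g_{fb},c)$ or with $(g_{fc}, c+(n/2)-j)$. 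Hence $D_{g_{fb},c}\omega = 0$ iff $D_{g_{fc}, c+(n/2)-j}\omega = 0$. Combining this with the identification of weighted $L^2$ spaces (used once with weight $c$ and once with each weight $c-\epsilon$) yields
\[
\mathcal{H}^j_{L^2}(M, g_{fb}, c) = \mathcal{H}^j_{L^2}\bigl(M, g_{fc}, c + \tfrac{n}{2} - j\bigr), \qquad \mathcal{H}^j_{ext}(M, g_{fb}, c) = \mathcal{H}^j_{ext}\bigl(M, g_{fc}, c + \tfrac{n}{2} - j\bigr),
\]
which is the first equality in (\ref{extforms}).

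Finally I would insert $c' := c + (n/2) - j$ into Theorem \ref{ihthm}; its hypotheses hold since $g_{fc}$ is a geometrically flat fibred cusp metric for $\phi$ and $\hat{M}$ is the compactification of $M$ obtained by collapsing the fibres of $\phi$ at $\partial\overline{M}$. This gives $\mathcal{H}^j_{ext}(M, g_{fc}, c') \cong IG^j_{((f/2)+1-c')}(\hat{M})$ with $f = \dim F$, and it remains to see that $q := \tfrac{f}{2} + 1 - c' = j - c + \bigl(\tfrac{f}{2} + 1 - \tfrac{n}{2}\bigr)$ equals $j - \tfrac{b-1}{2} - c$; this is immediate from $\dim \partial\overline{M} = n - 1 = b + f$, so $f = n-1-b$ and $\tfrac{f}{2} + 1 - \tfrac{n}{2} = -\tfrac{b-1}{2}$. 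I do not anticipate a real obstacle: the proof is essentially a conformal-weight computation, and the one point needing care is that the shift $\tfrac{n}{2}-j$ interacts correctly with the codifferential — which is precisely why I would express ``$\delta_{g,c}\omega = 0$'' as orthogonality of $\omega$ to $d$ of compactly supported forms, a condition seeing only the degree-$j$ inner product.
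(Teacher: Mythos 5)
Your proof is correct and follows essentially the same route as the paper: the conformal identity $x^cL^2_{g_{fb}}\Omega^j = x^{c+(n/2)-j}L^2_{g_{fc}}\Omega^j$, the resulting agreement of the weighted adjoints/harmonic conditions on $j$-forms, and substitution of the shifted weight into Theorem \ref{ihthm}. Your version merely spells out the details (the orthogonality characterization of $\delta_{g,c}\omega=0$ and the index arithmetic $f+b=n-1$) that the paper leaves implicit.
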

\begin{proof}
If we take $g_{fb}$ to be the conformally related fibred boundary metric on $M$, then the conformal relationship 
$g_{fb} = x^{-2}g_{fc}$ means that
\[
x^c L^2_{fb}\Omega^j_{fb}(M) = x^{c+\frac{n}{2} - j}L^2_{fc}\Omega^j_{fc}(M).
\]
This means for the Hodge star operators that also  $*_{fb,c} = *_{fc,c+(n/2)-j}$, so in fact the extended 
harmonic forms in these spaces are the same.
\end{proof}

If the boundary fibration $\phi$ is a product, $\phi:F\times B \to B$, then we can also define the dual fibration,
$\psi:F \times B \to F$.  If $M$ is the interior of $\overline{M}$ where the fibration structure on $\partial\overline{M}$ is
given by $\phi$, then let $M'$ denote the same manifold, but where we now take the fibration structure
on $\partial\overline{M}$ to be given by $\psi$.  Then a fibred boundary metric on $M$ (i.e., with respect to the fibration $\phi$) is a fibred scattering
metric on $M'$ (i.e., with respect to $\psi$), and $\hat{M} = CT(\hat{M'})$.  Thus we can also write

\begin{cor}\label{cfhodge}
Under the conditions of Theorem \ref{ihthm}, if $(M',g_{fs}) = (M,g_{fb})$ is the 
fibred scattering metric on $M'$, that is,
$M$ where the boundary fibration is $\psi:F \times B \to F$,
then 
\begin{equation}
\label{extforms2}
\mathcal{H}_{ext}^{j}(M', g_{fs}, c) 
\cong
IG^j_{(q)}(\hat{M}) = IG^j_{(q)}(CT(\hat{M'})),
\end{equation}
where $q=j-\frac{b-1}{2}-c$.
\end{cor}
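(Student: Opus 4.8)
The plan is to deduce Corollary \ref{cfhodge} directly from Corollary \ref{fbmetrics} together with two purely definitional identifications, so that no new analysis is needed. The additional hypothesis over Corollary \ref{fbmetrics} is only that the boundary fibration $\phi$ is a product $F\times B\to B$, which is exactly what is required in order for the dual fibration $\psi\colon F\times B\to F$ to be defined.

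First I would observe that $\mathcal{H}_{ext}^{j}(M', g_{fs}, c)$ is literally the same object as $\mathcal{H}_{ext}^{j}(M, g_{fb}, c)$. Passing from $M$ with boundary fibration $\phi$ to $M'$ with boundary fibration $\psi$ changes only the bookkeeping of which projection of the product $\partial\overline{M}=F\times B$ one calls the ``boundary fibration''; the underlying manifold, its boundary defining function $x$, and the Riemannian metric are unchanged, and (as recorded in Section \ref{sec.notation}) a fibred boundary metric for $\phi$ is by definition a fibred scattering metric for the dual fibration $\psi$. Since the weighted spaces $x^cL^2_g\Omega^*_g(M)$, the formal adjoint $\delta_{g,c}$, the operator $D_{g,c}$, and hence the extended harmonic form spaces depend only on the data $(M,g,x,c)$ and not on any choice of boundary fibration, the two sides coincide as spaces of forms, so $\mathcal{H}_{ext}^{j}(M', g_{fs}, c)=\mathcal{H}_{ext}^{j}(M, g_{fb}, c)$.

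Second I would identify the compactifications: $\hat M$ is obtained from $M$ by collapsing the fibres $F$ of $\phi$ over $\partial\overline{M}$, whereas $\hat{M'}$ is obtained from $M'$ by collapsing the fibres $B$ of $\psi$, so that $\hat{M'}$ has a single singular stratum $F$ with link $B$. By the definition of the conifold transition in Section \ref{sec.notation}, $CT(\hat{M'})$ is obtained from $\overline{M}$ by collapsing instead the $F$-direction at $\partial\overline{M}$, which is precisely $\hat M$; equivalently one invokes the involutive property $CT(CT(\,\cdot\,))\cong\,\cdot\,$. Hence $\hat M=CT(\hat{M'})$. This verification — that collapsing the $F$-fibres of $\phi$ and collapsing the $B$-fibres of $\psi$ yield spaces related by a conifold transition — is the only point requiring any care, and it is immediate once one unwinds the construction of $CT$.

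Finally, Corollary \ref{fbmetrics} applied to the fibred boundary metric $g_{fb}=x^{-2}g_{fc}$ on $M$ (with respect to $\phi$) gives
\[
\mathcal{H}_{ext}^{j}(M, g_{fb}, c)\cong IG^j_{(q)}(\hat M),\qquad q=j-\frac{b-1}{2}-c,
\]
with $b=\dim B$. Combining this with the equality $\mathcal{H}_{ext}^{j}(M', g_{fs}, c)=\mathcal{H}_{ext}^{j}(M, g_{fb}, c)$ from the first step and the identification $\hat M=CT(\hat{M'})$ from the second step yields
\[
\mathcal{H}_{ext}^{j}(M', g_{fs}, c)\cong IG^j_{(q)}(\hat M)=IG^j_{(q)}(CT(\hat{M'})),
\]
which is the assertion of the corollary.
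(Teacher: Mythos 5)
Your proposal is correct and follows essentially the same route as the paper, which likewise deduces the corollary from Corollary \ref{fbmetrics} by noting that a fibred boundary metric for $\phi$ is a fibred scattering metric for the dual fibration $\psi$ (so the extended harmonic form spaces coincide) and that $\hat{M}=CT(\hat{M'})$. The only difference is that you spell out these two identifications in more detail than the paper does.
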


Theorem \ref{hodge} follows from this last corollary and Theorem \ref{ihhi} by taking $M' = X-\Sigma$ and $\hat{M'}=X$:
\begin{eqnarray*}
HI^j_{\dR, \olp} (X) & \cong & IG^j_{(j+1-l+\olp(l+1))}(CT(X)) \\
&\cong& \calH^j_{ext}(M,g_{fc}, \frac{n}{2}-j + \frac{l-1}{2}-\olp(l+1)) \\
&=& \calH^j_{ext}(M,g_{fb},\frac{l-1}{2}-\olp(l+1)) \\
&=& \calH^j_{ext}(M',g_{fs},\frac{l-1}{2}-\olp(l+1)).
\end{eqnarray*}

\subsection{Example}
We consider the same space $X=S^2 \times T^2$ as in Example \ref{example1}, stratified as before.  
Then $M\cong \mathbb{R} \times S^1 \times T^2$.  We can endow this with a geometrically flat fibred scattering metric:
\[
g_{fs} = dr^2 + (1+r^2)d\theta_1^2 + d\theta_2^2 + d\theta_3^2.
\]
Note that if we make the change of coordinates $x=|r|^{-1}$ near $\pm \infty$, we get a metric
that is a perturbation of one of the form in Definition \ref{cfmetric} that decays like $x^2$.  This
turns out to be sufficient to use the same analysis (see \cite{GH2}).  If we consider
extended $L^2$ harmonic forms on $(M, g_{fs})$ with no weight ($c=0$), then Theorem \ref{hodge}
says 
\[
\mathcal{H}^*_{ext}(M,g_{fs},0) \cong HI^*_{\dR,\olp} (X),
\]
where $(1-1)/2-\olp(2)= 0$.  That is, the spaces of extended unweighted $L^2$ harmonic forms on $M$ should be
isomorphic to the spaces with $\olp(2)=0$ as we calculated in Section \ref{example1}.

In order to identify the extended $L^2$ harmonic forms on $(M,g_{fs})$ it is useful 
to observe a few things.  First, since the metric is a global
product metric, the extended $L^2$ harmonic forms on $M$ are all products of extended $L^2$
harmonic forms on $W=\mathbb{R} \times S^1$
with harmonic forms on $T^2$.  Thus it suffices to determine the extended harmonic forms
on $W$ with the metric $g_W:=dr^2 + (1+r^2)d\theta_1^2$.  

Second, we observe that $g_W$ is a scattering metric, and is thus
conformally invariant (with conformal factor $(1+r^2)$) to a b-metric.  By the same argument 
as in Corollary \ref{fbmetrics}, this means that extended harmonic forms on $(W, g_W)$ are 
the same as extended weighted $L^2$ harmonic forms on $(W,(1+r^2)^{-1}g_W)$.  These 
forms are, in turn, known to be in the kernel of $d$ and $\delta$ independently (see either Proposition 
6.16 in \cite{meaps} or Lemma 4.3 in \cite{Hu3}).  Thus we know that extended harmonic $L^2$ forms
on $W$ are both closed and co-closed.  This means that the only possible 0-forms are constants
and the only possible 2-forms are constant multiples of the volume form.  

Third, recall that for a differential form to be extended harmonic, it must be in $x^{-\epsilon} L^2\Omega^*(W, g_W)$ for all $\epsilon>0$, or equivalently, $(1+r^2)^{-\epsilon}w \in L^2\Omega^*(W, g_W)$.  If we consider constant functions, 
this means we need 
\[
\int_{-\infty}^\infty c^2 (1+r^2)^{1-2\epsilon} \, dr <0.
\]
This is not true, so $\mathcal{H}^0_{ext}(W,g_W) = \{0\}$.  By an analogous argument (or equivalently, by Poincar\'e
duality), also $\mathcal{H}^2_{ext}(W,g_W) = \{0\}$.  

Finally consider $\mathcal{H}^1_{ext}(W,g_W)$.  The space of extended $L^2$ harmonic forms of middle
degree is preserved by a conformal change of metric, and as noted before, $g_W$ is conformally 
equivalent to the metric
\[
g_b = \frac{dr^2}{1+r^2} + d\theta_1^2.
\]
If we reparametrise, setting $t={\rm arcsinh}(r)$, this becomes the metric on the infinite cylinder:
\[
g_b = dt^2 + d\theta_1^2.
\]
If we use a Fourier series decomposition in $\theta_1$, we find that a 1-form
\[
\omega = \eta_0(r)\, dr + \sum_{n=0}^\infty \left(\eta_{1,n}(r)\cos(n\theta_1) + \eta_{2,n}(r)\sin(n\theta_1)\right) \, dr
\]
\[
+ \mu_0(r) \, d\theta + \sum_{n=0}^\infty \left(\mu_{1,n}(r)\cos(n\theta_1) + \mu_{2,n}(r)\sin(n\theta_1)\right) \, d\theta_1
\]
is closed and coclosed if $\eta_0(r)$ and $\mu_0(r)$ are constant
and the remaining coefficients satisfy $f'' = n^2 f$, that is, they are all exponential functions in $t$,
and thus blow up at either $\infty$ or $-\infty$, so are not almost in $L^2$.  So the
only extended $L^2$ harmonic forms are $c_1 d\theta_1 + c_2dt$, which
are in  $(1+t^2)^\epsilon L^2\Omega^*(W, g_W)$ as required.
Thus $\mathcal{H}^1_{ext}(W,g_W) \cong \mathbb{R}^2$.
Now when we take the tensor product with $\mathcal{H}^*(T^2)$, we get 
\begin{eqnarray*}
\mathcal{H}^0_{ext}(M,g_{fs}) \cong 0 \cong HI^0_{\dR, \olp} (X) \\
\mathcal{H}^1_{ext}(M,g_{fs}) \cong \mathbb{R}^2\cong HI^1_{\dR, \olp} (X)\\
\mathcal{H}^2_{ext}(M,g_{fs}) \cong \mathbb{R}^4\cong HI^2_{\dR, \olp} (X)\\
\mathcal{H}^3_{ext}(M,g_{fs}) \cong \mathbb{R}^2\cong HI^3_{\dR, \olp} (X)\\
\mathcal{H}^4_{ext}(M,g_{fs}) \cong 0\cong HI^4_{\dR, \olp} (X),
\end{eqnarray*}
as predicted by Theorem \ref{hodge}.

\subsection{Inclusion Map for the Hodge Theorem}
It is useful if we can understand the map from extended harmonic forms to $HI$ cohomology as 
given by sending an extended harmonic form to the $HI$ class that it represents:  $\gamma \to [\gamma]$, 
as in the classical Hodge theorem.  However, the extended harmonic forms in our Hodge theorem do not 
lie in either of the two complexes we have seen that calculate $HI^*_{\dR,\olp} (X)$.  To see them 
as representatives of classes, we need new spaces of forms that can be 
used to calculate the $HI$ cohomology spaces and that do contain the extended harmonic forms.  
We can find spaces that work in this regard by reinterpreting the proof of Theorem \ref{hodge}.  We
can find appropriate new spaces of forms by using the isomorphism with $IG$ and alternative
complexes of forms that may be used to calculate $IH$.  

From \cite{Hu3}, we have the following setup and lemma which will allow us to see the extended harmonic forms as representing classes in $HI$.
Assume that $W$ is a pseudomanifold with a single, smooth singular stratum, $B$, whose link bundle with link $F$ is flat with respect to the structure group ${\rm Isom}(F,g_F)$ for some fixed metric
on $F$.  Let $M=W-B$ and let $x$ be a smooth function on $M$ that extends across $B$ in $W$ by zero. 
Let $\olM$ be the complement in $W$ of a normal neighborhood of $B$, and 
let $i_s:\partial \olM \to M$ denote the inclusion of $\partial \olM$ into $M$ in the slice where $x=s$. 

Define the projection operator $\Pi_{q-1,q}$ on $\Omega^*(\partial\olM)$ by projection onto forms
in fibre degree $q$ that lie in ${\rm Ker}(\tilde{\delta}_F)$ and forms in fibre degree $q+1$ that lie in 
${\rm Image}(\tilde{d}_F)$ in terms of the $F$ Hodge decomposition.  
Let $x^aL^2\Omega^*_{con}(M,g_{fc})$ denote the complex of forms on $M$ that are conormal at $x=0$ (see,
e.g. \cite{meaps}), and are also in the $x^a$ weighted $L^2$ space on $M$ with respect to the metric $g_{fc}$.

\begin{lemma}\label{lemma6.3}  The cohomology of the complex:
$x^{(f/2)-q - \epsilon}L^2\Omega^*(M,g_{fc})$ (made into a complex in the standard way by requiring both $\omega$
and $d\omega$ to lie in the appropriate spaces) is isomorphic to $IH_{(q)}^*(W)$
and the cohomology of the complex:
\[
x^{(f/2)-q - \epsilon}L^2\Omega_{0}^*(M,g_{fc}):=\{ \omega \in x^{(f/2)-q - \epsilon}L^2\Omega_{con}^*(M,g_{fc}) \qquad \qquad\]
\[\hspace{2cm}
\mid \lim_{s \to 0} \Pi_{q-1,q}\circ i^*_s \omega =0, \,  \lim_{s \to 0} \Pi_{q-1,q}\circ i^*_s d\omega=0\}
\]
is isomorphic to $IH_{(q-1)}^*(W)$.  Furthermore, we have the following long exact sequence on cohomology:
\[
\to H^{j-q-1}(B,H^p(F)) \stackrel{\delta}{\longrightarrow} IH_{(q-1)}^j(W) \stackrel{\inc^*}{\longrightarrow} IH_{(q)}^j(W) 
\stackrel{r}{\longrightarrow} H^{j-q}(B,H^q(F)) \to ,
\]
where $r=\lim_{s \to 0} \Pi_{q-1,q}\circ i^*_s$.
\end{lemma}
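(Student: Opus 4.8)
The plan is to establish the three assertions — all essentially due to \cite{Hu3} — by reducing each to a local model computation in the fibred cusp calculus, combined with a fine--sheaf (hypercohomology) argument. Set $a=(f/2)-q-\epsilon$, so the first complex is $x^{a}L^2\Omega^*(M,g_{fc})$, with conormality understood so that the differential has closed range for the generic weight, and the second is its subcomplex $x^{a}L^2\Omega^*_0(M,g_{fc})$ cut out by the two vanishing conditions on $\lim_{s\to0}\Pi_{q-1,q}\circ i^*_s$.

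\emph{First assertion.} First I would show that $x^{a}L^2\Omega^*(M,g_{fc})$ sheafifies over $W=CT(X)$ to a complex of \emph{fine} sheaves: a smooth function on $W$ is constant along the collapsing link directions near $B$, hence has $g_{fc}$-bounded differential, so multiplication by a partition of unity subordinate to an open cover of $W$ preserves the complex. Then its cohomology is the hypercohomology of $W$ with this complex and it suffices to compute stalks: over a regular point the stalk is the ordinary de Rham complex, while over $b\in B$, after restricting to a distinguished neighbourhood $c^\circ F\times B'$ with $B'$ a contractible coordinate patch and performing a K\"unneth reduction, one gets the weighted $L^2$ cohomology of the model fibred cusp cone. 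Passing to the variable $t=-\log x$ and separating variables, a Mellin--transform analysis (the generic $\epsilon>0$ keeps all indicial roots off the weight line, which also forces closed range and identifies the cohomology with conormal harmonic forms) gives a truncation of $H^*(F)$; the weight $a$ is calibrated so that this truncation is exactly the one appearing in the cone formula (\ref{ihpoincare}) for $IH_{(q)}$. Since $W=CT(X)$ and the complex $I\Omega^*_{(q)}(CT(X))$ of (\ref{IHcomplex}) has cohomology $IH^*_{(q)}(CT(X))$, a comparison — regularising a weighted $L^2$ class to a conormal, hence ``classical'', representative — identifies the two cohomologies; alternatively one invokes the sheaf axioms for $IH_{(q)}$. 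This gives the first assertion.

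\emph{Second assertion and long exact sequence.} The subcomplex $x^{a}L^2\Omega^*_0$ kills the leading boundary datum of a conormal class precisely in the fibre degrees where it survives at weight $a$, namely the $\Pi_{q-1,q}$-components (fibre degree $q$ in $\ker\tilde{\delta}_F$ and fibre degree $q+1$ in $\im\tilde{d}_F$); on the local model this Dirichlet-type condition is exactly the passage from the weight appropriate to perversity $q$ to the one appropriate to perversity $q-1$, the unique indicial jump between the two being at fibre degree $q$. Hence $x^{a}L^2\Omega^*_0$ is again fine over $W$ with the same stalks as the complex computing $IH^*_{(q-1)}(W)$, and the argument of the first assertion applies verbatim. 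For the long exact sequence, the inclusion fits into a short exact sequence of complexes $0\to x^{a}L^2\Omega^*_0\to x^{a}L^2\Omega^*\xrightarrow{r}Q^*\to0$ with $r(\omega)=\lim_{s\to0}\Pi_{q-1,q}\circ i^*_s\omega$ (the limit exists by conormality, and $r$ is a chain map since $\lim_s\Pi_{q-1,q}\,i^*_s(d\omega)=d\bigl(\lim_s\Pi_{q-1,q}\,i^*_s\omega\bigr)$ on the relevant components). Thus $Q^*$ is quasi-isomorphic to $\bigl(\Pi_{q-1,q}\Omega^*(\partial\olM),d\bigr)$; since the link bundle of $\partial\olM$ is flat with isometric structure group, $H^*(\partial\olM)\cong\bigoplus_p H^{*-p}(B;H^p(F))$, and the fibre-degree-$(q+1)$ summand of $\Pi_{q-1,q}$, being exact along the fibre, is acyclic, so $H^j(Q^*)\cong H^{j-q}(B,H^q(F))$. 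Feeding the first two assertions and this computation into the long exact cohomology sequence of the short exact sequence of complexes produces
\[
\cdots\to IH^j_{(q-1)}(W)\xrightarrow{\inc^*}IH^j_{(q)}(W)\xrightarrow{r}H^{j-q}(B,H^q(F))\xrightarrow{\delta}IH^{j+1}_{(q-1)}(W)\to\cdots,
\]
which is the asserted sequence.

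The hard part is twofold. The first is the indicial analysis underlying the first assertion: locating $a=(f/2)-q-\epsilon$ relative to the indicial roots of the fibred cusp Laplacian and verifying that the resulting truncation of $H^*(F)$ is exactly (\ref{ihpoincare}); this is where conormality and genericity of $\epsilon$ must be used to get closed range and finite dimensionality. The second is the precise identification of the quotient complex $Q^*$ in the last step — that $r$ is a well-defined chain map surjecting onto $\Pi_{q-1,q}\Omega^*(\partial\olM)$, and that the cohomology of the latter is the flat-bundle group $H^{*-q}(B,H^q(F))$. Both points are exactly what \cite{Hu3}, and the fibred cusp Hodge theory it rests on, supplies, so I would cite those results rather than reprove them here.
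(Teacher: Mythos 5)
The paper gives no proof of this lemma at all: it is quoted verbatim from \cite{Hu3} (these are the complexes used to prove Theorem 5.1 there), so your proposal --- which sketches the fine-sheaf/local-model argument, the indicial-root calibration of the weight $(f/2)-q-\epsilon$, and the identification of the quotient complex $Q^*$, and then defers the hard analytic points to \cite{Hu3} --- is consistent with, and strictly more detailed than, what the paper itself supplies. Your outline is the standard one and looks correct; note only that the term $H^{j-q-1}(B,H^p(F))$ in the paper's displayed long exact sequence is evidently a typo for $H^{j-q-1}(B,H^q(F))$, in agreement with your computation of $H^j(Q^*)$.
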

These are the complexes used to prove Theorem 5.1 from \cite{Hu3}, so from Corollary \ref{fbmetrics}, letting
$W=CT(X)$, $B=L$, $\Sigma = F$, we have
that the isomorphism in Corollary \ref{cfhodge} is realised by an inclusion of the space of extended weighted 
$L^2$ harmonic forms into the numerator of the quotient space:
\[
\calH^j_{ext}(M',g_{fs},c) \to \frac{{\rm Ker}(d) \subset x^{(f/2)-q - \epsilon}L^2\Omega^j(M,g_{fc})}
{d(x^{(f/2)-q - \epsilon}L^2\Omega_{0}^j(M,g_{fc}))},
\]
where $q=j-\frac{b-1}{2} -c$ for $b$ the dimension of $B$.  We can reinterpret the spaces on the right in terms of the metric $g_{fs}$ to get:
\[
\calH^j_{ext}(M',g_{fs},c) \to \frac{{\rm Ker}(d) \subset x^{c-1 - \epsilon}L^2\Omega^j(M',g_{fs})}
{d(x^{c-1 - \epsilon}L^2\Omega_{0}^j(M',g_{fs}))}.
\]
Using Theorem \ref{ihhi}, we calculate $HI^j_{\dR, \olp}(X)$ from this quotient:
\[
HI^j_{\dR, \olp}(X) \cong \frac{{\rm Ker}(d) \subset x^{(l-3)/2-\olp(l+1) - \epsilon}L^2\Omega^*(M',g_{fs})}
{d(x^{(l-3)/2-\olp(l+1) - \epsilon}L^2\Omega_{0}^*(M',g_{fs}))}.
\]
This is then the definition of $HI^j_{\dR, \olp}(X)$ for which the isomorphism in the Hodge theorem, Theorem \ref{hodge}, is given
by the classical map $\gamma \to [\gamma]$.

\section{Proof of Theorem \ref{intersection}}
In order to prove Theorem \ref{intersection}, we need to understand how the intersection pairing
defined on the original de Rham cohomology of intersection spaces relates to the isomorphism
in Theorem \ref{ihhi} and the intersection pairing on the de Rham intersection cohomology groups.
First, we can show that the alternative complex we defined to calculate $HI^*_{\dR, \olp} (X)$ also admits
a natural intersection pairing by integration, and that this pairing is equivalent to the original
pairing by the isomorphism in Lemma \ref{hieq}.

\begin{lemma}
Integration defines a bilinear pairing between $\widehat{HI}^j_\olp(X)$ and $\widehat{HI}^{n-j}_\olq(X)$
which is equal to the pairing by integration between $HI^j_{\dR, \olp} (X)$ and $HI^{n-j}_{\dR, \olq} (X)$.
\end{lemma}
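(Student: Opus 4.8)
The plan is to establish two things: first, that wedge product followed by integration over $X-\Sigma$ descends to a well-defined bilinear pairing $\widehat{HI}^j_\olp(X)\otimes\widehat{HI}^{n-j}_\olq(X)\to\real$; and second, that under the isomorphism of Lemma \ref{hieq} it corresponds to the integration pairing on $HI^*_{\dR,\olp}(X)$. The second point will follow almost formally from the first, because the isomorphism of Lemma \ref{hieq} is induced by the \emph{inclusion} of complexes $\Omega I^*_\olp(M)\hookrightarrow\widehat{\Omega I}^*_\olp(X)$. Indeed, any class in $HI^j_{\dR,\olp}(X)$ is represented by a form $\omega\in\Omega I^j_\olp(M)\subset\widehat{\Omega I}^j_\olp(X)$, and similarly any class in $HI^{n-j}_{\dR,\olq}(X)$ by some $\eta\in\Omega I^{n-j}_\olq(M)\subset\widehat{\Omega I}^{n-j}_\olq(X)$; the new pairing evaluates these images to the \emph{same} real number $\int_{X-\Sigma}\omega\wedge\eta$ that the old pairing produces. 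Since every class of $\widehat{HI}$ arises from such an $\omega$ (Lemma \ref{hieq} gives surjectivity on cohomology) and both pairings are a priori well defined on cohomology, they agree under the isomorphism.

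Thus the substantive point is the well-definedness of the pairing on $\widehat{\Omega I}$. Convergence is automatic: by definition an element of $\widehat{\Omega I}^*_{(k)}(X)$ is the restriction to $M$ of a smooth form on the compact manifold with boundary $\olM$, so $\omega\wedge\eta$ extends smoothly to $\olM$ and $\int_{X-\Sigma}\omega\wedge\eta=\int_{\olM}\omega\wedge\eta$ is finite; bilinearity is evident. What must be checked is that the value depends only on the de Rham classes, i.e. that $\int_{X-\Sigma}\omega\wedge\eta=0$ whenever $d\omega'=\omega$ with $\omega'\in\widehat{\Omega I}^{j-1}_\olp(X)$ and $d\eta=0$ (and symmetrically in the other variable). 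Stokes' theorem on the compact manifold with boundary $\olM$, together with $d\eta=0$, gives
\[
\int_{X-\Sigma}\omega\wedge\eta=\int_{\olM}d(\omega'\wedge\eta)=\int_{\partial\olM}\inc^*\omega'\wedge\inc^*\eta,
\]
so it suffices to see that this boundary integral vanishes.

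This is where the complementarity of $\olp$ and $\olq$ enters. Writing $k=l-\olp(l+1)$ and $k'=l-\olq(l+1)$ for the two cutoff degrees, and using that $\Sigma$ has codimension $l+1$ in $X$, complementarity means $\olp(l+1)+\olq(l+1)=(l+1)-2=l-1$, hence $k+k'=l+1$. By construction $\inc^*\omega'\in\hft_{\geq k}\Omega^*(Y)$ is a sum of $(L,\Sigma)$-bihomogeneous forms of $L$-degree at least $k$, and $\inc^*\eta\in\hft_{\geq k'}\Omega^*(Y)$ a sum of forms of $L$-degree at least $k'$. Since $L$-degrees add under wedge product on $Y=L\times\Sigma$, every term of $\inc^*\omega'\wedge\inc^*\eta$ has $L$-degree at least $k+k'=l+1>l=\dim L$ and therefore vanishes identically. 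Hence the boundary term is zero; exchanging the roles of $\omega'$ and $\eta$ handles the case $d\eta'=\eta$. Combined with the formal argument of the first paragraph, this proves the lemma.

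The only (minor) obstacle is the bookkeeping: one must keep straight that $\hft_{\geq k}$ constrains the $L$-degree rather than the total degree, and that complementarity is being applied at the codimension $l+1$ of $\Sigma$ in $X$ (equivalently, that $L$ has dimension $l$); once these points are pinned down, the vanishing of the boundary integral — and hence both the well-definedness and the compatibility assertion — is immediate.
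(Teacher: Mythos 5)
Your proof is correct and follows essentially the same route as the paper: well-definedness of the pairing on $\widehat{\Omega I}^*$ via Stokes (the paper phrases this as a limit $s\to 0$ of boundary integrals) combined with the bidegree count that complementarity forces the two cutoffs to satisfy $k+k'=l+1>\dim L$, so the boundary integrand vanishes identically. Your handling of the compatibility with the pairing on $HI^*_{\dR}$ is a clean formal shortcut --- since the isomorphism of Lemma \ref{hieq} is induced by an inclusion of complexes, both classes can be represented by the very same forms and the two pairings literally coincide on them --- whereas the paper re-traces the explicit modifications from the surjectivity argument of Lemma \ref{hieq}; both are valid.
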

\begin{proof}
First we will show there is a well defined bilinear pairing between $\widehat{HI}^j_\olp(X)$ and $\widehat{HI}^{n-j}_\olq(X)$. 
Let $\hat{\alpha} \in \widehat{\Omega I}^j_\olp(X)$ and $\hat{\beta} \in \widehat{\Omega I}^{n-j}_\olq(X)$.
Then $\int_M \hat{\alpha} \wedge \hat{\beta}$ is finite since both forms are smooth on $\olM$.
Furthermore, if $\hat{\alpha} = d\hat{\eta}$, where $\hat{\eta} \in\widehat{\Omega I}^{j-1}_\olp(X)$, then
\[
\int_M d\hat{\eta} \wedge \hat{\beta} = \lim_{s \to 0} \int_Y \hat{\eta}(s) \wedge \hat{\beta}(s).
\]
We can decompose $\hat{\eta}(s)$ and $\hat{\beta}(s)$ by $(L,\Sigma)$ bidegree.  By definition
of $\widehat{\Omega I}^{j-1}_\olp(X)$, $\widehat{\Omega I}^{n-j}_\olq(X)$, and by the fact that $\olp(l+1)+\olq(l+1)= l-1$, 
we get that
\[
\lim_{s \to 0} \sum_{i=0}^{k-1} \hat{\eta}_{i,j-1-i}(s)=0= \lim_{s \to 0} \sum_{i=0}^{l-k} \hat{\beta}_{i,(n-j)-i}(s).
\]
Thus the only part that can remain in the limit is
\[
= \lim_{s \to 0} \int_Y \left( \sum_{i=k}^{l} \hat{\eta}_{i,k-1-i}(s)\right) \wedge \left(\sum_{i=l+1-k}^{l} \hat{\beta}_{i,n-j-i}(s)\right).
\]
But this also is zero, since none of the terms in the second sum is of complementary bidegree to any term in the first sum.

 \medskip
Now we can show that this pairing is equal to the pairing by integration between 
$HI^j_{\dR, \olp} (X)$ and $HI^{n-j}_{\dR, \olq} (X)$.  This follows using the surjectivity argument from Lemma \ref{hieq}.  
First note that when we replace the 
lower-truncated multiplicatively structured complex on $Y$ by the lower truncated standard complex on $Y$
(corresponding to the step where we adjust $\beta$ to $\tilde{\beta}$ in Lemma \ref{hieq}), we get the 
same pairing.  This is because multiplicatively structured forms on $Y$ are dense in $L^2$, and in particular,
in the subspace of smooth forms on $Y$.  Thus the $L^2$ pairing on $M$ will extend continuously to lower truncated smooth forms.

So it suffices to consider how the pairing is preserved in the first part of the surjectivity argument, passing from 
$\hat{\alpha}$ to $\alpha$ and $\hat{\beta}$ to $\beta$.  We have that 
\[
\alpha = \hat{\alpha} - d \chi\int_0^x \hat{\alpha}_n(t) \, dt, \qquad \beta = \hat{\beta} - d \chi\int_0^x \hat{\beta}_n(t) \, dt.
\]
Thus
\[
\int_M \alpha \wedge \beta = \int_M \left(\hat{\alpha} - d \chi\int_0^x \hat{\alpha}_n(t) \, dt \right) \wedge 
\left(\hat{\beta} - d \chi\int_0^x \hat{\beta}_n(t) \, dt \right)
\]
\[
= \lim_{s \to 0} \left[ \int_{M_s} \hat{\alpha} \wedge \hat{\beta} - \int_Y\left(\chi(s)\int_0^s \hat{\alpha}_n(t) \, dt \right) \wedge \beta(s)  \pm \int_Y \hat{\alpha}(s)\wedge \left(\chi(s)\int_0^s \hat{\beta}_n(t) \, dt\right) \right]
\]
\[
=\int_{M} \hat{\alpha} \wedge \hat{\beta},
\]
because the other two integrands vanish at $s=0$.
\end{proof}

Next, we want to trace this pairing through the proof of Theorem \ref{ihhi} to see how it can be interpreted
in terms of the signature pairing on intersection cohomology on $CT(X)$.  Recall that we have
\begin{equation}\label{Igp}
HI^j_{\dR,\olp} (X) \cong IG^j_{(t+1)}(CT(X)) 
= \frac{IH^j_{(t)}(CT(X)) \oplus IH^j_{(t+1)}(CT(X))}{{\rm Image}\left(IH^j_{(t)}(CT(X)) \to IH^j_{(t+1)}(CT(X)) \right)}
\end{equation}
where $t=j-l+\olp(l+1)$.  
So also if $\olp$ and $\olq$ are dual perversities on $X$, then $\olp(l+1) + \olq(l+1) = l-1$ implies that
\begin{equation}\label{Igq}
HI^{n-j}_{\dR, \olq} (X) \cong IG^{n-j}_{(s+1)}(CT(X)) 
= \frac{IH^{n-j}_{(s)}(CT(X)) \oplus IH^{n-j}_{(s+1)}(CT(X))}{{\rm Image}\left(IH^{n-j}_{(s)}(CT(X)) \to IH^{n-j}_{(s+1)}(CT(X)) \right)}
\end{equation}
where $s=n-j-1-\olp(l+1)$.  Observe that $t+s+1= n-l$, which is the codimension of the singular stratum
in $CT(X)$.  This is the relationship we expect for the cutoff degrees for dual perversities in $IH^*_*(CT(X))$.
That is, the signature pairing for intersection cohomology on $CT(X)$ pairs the first term in the 
top of Equation (\ref{Igp}) with the second term in the top of Equation (\ref{Igq}), and vice versa.

We can identify the right and left spaces in Equation (\ref{Igp}) in terms of the $HI$ space
using the maps $A$ and $B$ from the proof of Theorem \ref{ihhi}.  To distinguish these
maps in the two settings of Equations (\ref{Igp}) and (\ref{Igq}), fix the following
notation:
\[
IH^j_{(t)}(CT(X))\stackrel{A_\olp}{\longrightarrow} HI^j_{\dR,\olp} (X) \stackrel{B_\olp}{\longrightarrow} IH^j_{(t+1)}(CT(X))
\]
and similarly define $A_\olq$ and $B_\olq$ for the spaces in Equation (\ref{Igq}).  Then we have
\[
IH^j_{(t)}(CT(X)) \cong \mbox{Im}(A_\olp), \mbox{ and} \]
\[
IH^j_{(t+1)}(CT(X)) \cong HI^j_{\dR,\olp} (X)/\mbox{Ker}(B_\olp),
\]
and analogous isomorphism in the $\olq$ case.  Now we can precisely state the compatibility
between the intersection pairing on $HI$ spaces and on $IH$ spaces.
\begin{lemma}\label{comp}
For $[\alpha] \in IH^j_{(t)}(CT(X))$ and $[\beta] \in HI^{n-j}_{\dR,\olq} (X)$, 
\[
A_\olp[\alpha] \cap_{HI} [\beta] = [\alpha] \cap_{IH} B_\olq[\beta].
\]
\end{lemma}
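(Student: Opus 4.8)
The plan is to reduce the claimed identity to a single observation: on the level of differential forms, $A_\olp$ and $B_\olq$ are merely inclusions of one de Rham-type complex into another, and \emph{both} intersection pairings are computed by the same integral over the common regular part $M$. First I would fix notation. With $t=j-l+\olp(l+1)$ one has $j-t=l-\olp(l+1)=:k$, the cutoff degree appearing in the complex $\widehat{\Omega I}^*_{(k)}(X)$ that computes $HI^j_{\dR,\olp}(X)$; put $k':=l-\olq(l+1)$, the analogous cutoff for $HI^{n-j}_{\dR,\olq}(X)$, and $s:=n-j-1-\olp(l+1)=(n-j)-k'$. Since $\olp(l+1)+\olq(l+1)=l-1$, it follows that $t+(s+1)=n-l=\codim_{CT(X)}L$, so $t$ and $s+1$ are complementary cutoff degrees for intersection cohomology of $CT(X)$, and moreover $k+k'=l+1$; these numerology facts are what make the boundary terms below vanish.

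Next I would choose compatible representatives. Pick a closed $\alpha\in I\Omega^j_{(t)}(CT(X))$ representing $[\alpha]$, and, using Lemma~\ref{hieq}, a closed $\beta\in\widehat{\Omega I}^{n-j}_{(k')}(X)$ representing $[\beta]$. By the proof of the lemma in the proof of Theorem~\ref{ihhi} that constructs $A$ and $B$, a closed form in $I\Omega^j_{(t)}(CT(X))$ automatically lies in $\widehat{\Omega I}^j_{(k)}(X)$ and there $A_\olp[\alpha]$ is represented by the same form $\alpha$; dually, a closed form in $\widehat{\Omega I}^{n-j}_{(k')}(X)$ automatically lies in $I\Omega^{n-j}_{(s+1)}(CT(X))$, where $B_\olq[\beta]$ is represented by $\beta$. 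Thus the single pair $(\alpha,\beta)$ simultaneously represents classes in all four cohomology groups occurring in the statement.

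Then I would identify both pairings with $\int_M\alpha\wedge\beta$. By the lemma immediately preceding Lemma~\ref{comp}, the pairing $\cap_{HI}$ on $HI^*_{\dR}$ is computed on the $\widehat{\Omega I}^*$ complexes by wedge product followed by integration over $M$, so $A_\olp[\alpha]\cap_{HI}[\beta]=\int_M\alpha\wedge\beta$. On the other hand, the Poincaré duality (intersection) pairing $\cap_{IH}$ on intersection cohomology of $CT(X)$, for the complementary cutoff degrees $t$ and $s+1$, is induced on the complexes $I\Omega^*_{(\cdot)}(CT(X))$ by wedge product followed by integration over $M$ (cf.\ \cite{BHS}), whence $[\alpha]\cap_{IH}B_\olq[\beta]=\int_M\alpha\wedge\beta$. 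The two sides are then literally the same real number.

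The part I expect to require the most care — although it is bookkeeping rather than a genuine obstacle — is checking that $\int_M\alpha\wedge\beta$ really descends to cohomology on both sides, i.e.\ is unchanged when $\alpha$ is replaced by $\alpha+d\eta$ with $\eta$ in either $I\Omega^{j-1}_{(t)}(CT(X))$ or $\widehat{\Omega I}^{j-1}_{(k)}(X)$, and symmetrically for $\beta$. By Stokes' theorem this difference is a boundary term of the shape $\lim\int_{\partial\olM}\inc^*\eta\wedge\inc^*\beta$, and a bidegree count on $Y=L\times\Sigma$ annihilates it: a top-degree form on $Y$ has $(L,\Sigma)$-bidegree exactly $(l,s)$, whereas if $\eta\in I\Omega^{j-1}_{(t)}(CT(X))$ the $\Sigma$-degrees in $\inc^*\eta$ are $\le t-1$, if $\eta\in\widehat{\Omega I}^{j-1}_{(k)}(X)$ the $L$-degrees in $\inc^*\eta$ are $\ge k$, while $\beta$ (being in both complexes) has $\inc^*\beta$ supported in $L$-degrees $\ge k'$ and $\Sigma$-degrees $\le s$; using $\olp(l+1)+\olq(l+1)=l-1$ one checks that no bidegree component of $\inc^*\eta$ is complementary to one of $\inc^*\beta$, so the boundary term is zero. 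This is exactly the bidegree bookkeeping already carried out in the proof of the preceding lemma, so it goes through with no new input, and we obtain $A_\olp[\alpha]\cap_{HI}[\beta]=\int_M\alpha\wedge\beta=[\alpha]\cap_{IH}B_\olq[\beta]$, as asserted.
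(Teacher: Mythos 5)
Your proposal is correct and follows essentially the same route as the paper's proof: both arguments rest on the observation that $A_\olp$ and $B_\olq$ are inclusions at the level of forms, so a single pair of representatives $(\alpha,\beta)$ computes both pairings as the same integral $\int_M\alpha\wedge\beta$. The only difference is that you spell out the Stokes/bidegree verification of well-definedness, which the paper simply cites as already known for each pairing.
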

\begin{proof}
Both of the pairings, $\cap_{IH}$ and $\cap_{HI}$ are achieved on their corresponding de Rham
cohomology spaces by integration of the wedge of representatives of the paired cohomology classes.
Both are known to be well-defined on their corresponding cohomologies.  Furthermore, by definition
of the map $A_\olp$, we can take the same
representative form to represent both $[\alpha]$ and $A_\olp[\alpha]$.  Similarly, we can represent
both $[\beta]$ and $B_\olq[\beta]$ by the same form.  
Thus for $[\alpha]$ and $[\beta]$ as in the statement of the lemma,
\[
A_\olp[\alpha] \cap_{HI} [\beta]:= \int_M \alpha \wedge \beta := [\alpha] \cap_{IH} B_\olq[\beta].
\]
\end{proof}
Note that this gives us the following corollary:
\begin{cor}
$\mbox{Image}(A_\olp)$ is the annihilator under the pairing $\cap_{HI}$ of $\mbox{Kernel}(B_\olq)$.
\end{cor}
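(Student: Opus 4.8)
The plan is to deduce the Corollary from Lemma \ref{comp} together with the nondegeneracy of the two intersection pairings in play, by a formal annihilator argument. Since $X$ is compact, all the cohomology groups occurring are finite-dimensional, so for the nondegenerate pairing $\cap_{HI}\colon HI^{j}_{\dR,\olp}(X)\times HI^{n-j}_{\dR,\olq}(X)\to\real$ of \cite{Ba-new} and any subspace $U$ of either factor, one has $(U^{\perp})^{\perp}=U$, where $U^{\perp}$ denotes the annihilator of $U$ under $\cap_{HI}$ (living in the other factor). Consequently it suffices to prove the single identity
\[
\operatorname{Image}(A_\olp)^{\perp}=\operatorname{Kernel}(B_\olq)
\]
of subspaces of $HI^{n-j}_{\dR,\olq}(X)$; taking $^{\perp}$ once more then yields $\operatorname{Image}(A_\olp)=\operatorname{Kernel}(B_\olq)^{\perp}$, which is exactly the assertion. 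I would prove this identity by the two inclusions.

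For $\operatorname{Kernel}(B_\olq)\subseteq\operatorname{Image}(A_\olp)^{\perp}$: given $y\in\operatorname{Kernel}(B_\olq)$ and any $[\alpha]\in IH^{j}_{(t)}(CT(X))$, Lemma \ref{comp} gives $A_\olp[\alpha]\cap_{HI}y=[\alpha]\cap_{IH}B_\olq y=[\alpha]\cap_{IH}0=0$, so $y$ annihilates all of $\operatorname{Image}(A_\olp)$. (This is the same computation that underlies the inclusion $\operatorname{Image}(A_\olp)\subseteq\operatorname{Kernel}(B_\olq)^{\perp}$, the ``easy half'' of the Corollary, read in the other direction.) For the reverse inclusion, let $y\in HI^{n-j}_{\dR,\olq}(X)$ annihilate $\operatorname{Image}(A_\olp)$. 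Then Lemma \ref{comp} again yields $[\alpha]\cap_{IH}B_\olq y=A_\olp[\alpha]\cap_{HI}y=0$ for every $[\alpha]\in IH^{j}_{(t)}(CT(X))$. Now $t+(s+1)=n-l$ is precisely the codimension of the singular stratum of $CT(X)$, and $L$, $\Sigma$ are orientable (the standing hypotheses of Theorem \ref{ihhi}); hence the pairing $IH^{j}_{(t)}(CT(X))\times IH^{n-j}_{(s+1)}(CT(X))\to\real$, given by integration of forms as in \cite{BHS}, \cite{Hu3}, is nondegenerate. Since $B_\olq y\in IH^{n-j}_{(s+1)}(CT(X))$ pairs trivially with all of $IH^{j}_{(t)}(CT(X))$, it must vanish, so $y\in\operatorname{Kernel}(B_\olq)$.

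The step I expect to be the main obstacle is pinning down the precise Poincaré duality statement used for $IH^{*}_{*}(CT(X))$: one needs the integration pairing between the two complementary ``cutoff-degree'' de Rham complexes on $CT(X)$ to be nondegenerate even for cutoff values outside the classical Goresky--MacPherson range (note $t$ or $s+1$ may be $\leq 0$ or $>n-l$, in which case the corresponding $IH$ reduces to absolute or relative de Rham cohomology of $\olM$ and the duality is the usual Poincaré--Lefschetz one), and one needs this pairing to be the very pairing $\cap_{IH}$ appearing in Lemma \ref{comp}. Once that compatibility is recorded — it is essentially what is behind the discussion of Equations (\ref{Igp})--(\ref{Igq}) together with Lemma \ref{comp} — the rest of the argument is purely formal linear algebra with annihilators, as above. (Alternatively, and avoiding any appeal to $\cap_{IH}$-nondegeneracy, one may finish by a dimension count: the ``easy half'' already gives $\operatorname{Image}(A_\olp)\subseteq\operatorname{Kernel}(B_\olq)^{\perp}$, and equality follows since $\dim\operatorname{Image}(A_\olp)=\dim IH^{j}_{(t)}(CT(X))$ by injectivity of $A_\olp$, $\dim\operatorname{Kernel}(B_\olq)^{\perp}=\dim HI^{j}_{\dR,\olp}(X)-\dim\operatorname{Kernel}(B_\olq)$ and $\dim\operatorname{Kernel}(B_\olq)=\dim HI^{n-j}_{\dR,\olq}(X)-\dim IH^{n-j}_{(s+1)}(CT(X))$ by surjectivity of $B_\olq$, while $\dim HI^{j}_{\dR,\olp}(X)=\dim HI^{n-j}_{\dR,\olq}(X)$ by \cite{Ba-new} and $\dim IH^{j}_{(t)}(CT(X))=\dim IH^{n-j}_{(s+1)}(CT(X))$ by Poincaré duality on $CT(X)$, so the two dimensions agree.)
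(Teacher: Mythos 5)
Your argument is correct and, in substance, the same as the paper's: the ``easy half'' $\operatorname{Image}(A_\olp)\subseteq\operatorname{Kernel}(B_\olq)^{\perp}$ via Lemma \ref{comp}, upgraded to equality using Poincar\'e duality on the intersection cohomology of $CT(X)$ for the complementary cutoff degrees $t$ and $s+1$ together with nondegeneracy of $\cap_{HI}$ --- indeed your parenthetical dimension count is essentially verbatim the paper's proof, which equates $\dim\operatorname{Kernel}(B_\olq)$ with $\dim\bigl(HI^j_{\dR,\olp}(X)/\operatorname{Image}(A_\olp)\bigr)$ via the identifications $\operatorname{Kernel}(B_\olq)\cong\operatorname{Kernel}(B_\olq\circ A_\olq)$ and $HI^j_{\dR,\olp}(X)/\operatorname{Image}(A_\olp)\cong IH^j_{(t+1)}(CT(X))/\operatorname{Image}(B_\olp\circ A_\olp)$. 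Your primary route (killing $B_\olq y$ directly by nondegeneracy of $\cap_{IH}$ and then taking annihilators twice) is only a cosmetic reorganization resting on the same inputs, and the duality statement you flag as the potential obstacle is exactly what the paper also invokes without further elaboration.
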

\begin{proof}
If $[\beta] \in \mbox{Kernel}(B_\olq)$, then by Lemma \ref{comp}, $A_\olp[\alpha] \cap_{HI} [\beta]=0$.  This 
means that $\mbox{Image}(A) \subset \mbox{Ann}(\mbox{Kernel}(B_\olq))$.  Note that since 
$B \circ A$ is the natural map
of adjacent intersection cohomology groups obtained by the inclusion of cochain complexes, we have
\[
\mbox{Kernel}(B_\olq) \cong \mbox{Kernel}(B_\olq\circ A_\olq), \qquad \mbox{and}
\]
\[
HI^j_{\dR,\olp}(X)/\mbox{Image}(A_\olp) \cong IH^j_{(t+1)}(CT(X))/\mbox{Image}(B_\olp \circ A_\olp).
\]
Thus by Poincar\'e duality on intersection cohomology, 
\[
\mbox{dim}\left(\mbox{Kernel}(B_\olq)\right) = \mbox{dim}\left(HI^j_{\dR,\olp} (X)/\mbox{Image}(A_\olp)\right).
\]
So by nondegeneracy of the intersection pairing, in fact $\mbox{Image}(A_\olp)$ is the entire annihilator
of $\mbox{Kernel}(B_\olq)$.
\end{proof}

Now let us focus on the setting where $X$ is even dimensional and has a unique middle perversity,
$\olm$.  Then the equations \ref{Igp} and \ref{Igq} are identical, and we can check that
\[
IH^{n/2}_{(s)} (CT(X))=IH^{n/2}_{\olm}(CT(X)), \qquad IH^{n/2}_{(s+1)} (CT(X))=IH^{n/2}_{\oln}(CT(X)),
\]
the middle degree lower and upper middle perversities for $(CT(X))$.  Now we can use the 
nondegeneracy of the intersection pairing on $HI^{n/2}_{\dR,\olm}(X)$ to identify the dual
of $\mbox{Kernel}(B_\olm)$ as a subspace of $HI^{n/2}_{\dR,\olm} (X)$.  For the sum of 
$\mbox{Kernel}(B_\olm)$ and its dual, the signature form then vanishes.  This means that 
the signature on $HI^{n/2}_{\dR,\olm} (X)$ is equal to the signature on the complement of this space,
which is isomorphic to 
\[
\mbox{Image}(A_\olm)/\mbox{Kernel}(B_\olm) \cong \mbox{Image}(B_\olm \circ A_\olm) \subset IH^{n/2}_\oln(CT(X)).
\]  
By Lemma \ref{comp}, the signature on two sides of this isomorphism are also equal.  Thus
we get that the signature of the intersection pairing on  $HI^{n/2}_{\dR,\olm} (X)$ is equal to the 
signature of the intersection pairing on 
\[
\mbox{Image}(IH^{n/2}_\olm(CT(X)) \to IH^{n/2}_\oln(CT(X)),
\]
which is by definition the middle perversity perverse signature on $CT(X)$.

Next, we prove that both of these are also equal to the middle
perversity signatures for $HI$ and $IH$ of the space $Z$ obtained as the one-point compactification of $M$
(which are both simply the signature of $M$ as an open manifold).  This follows from
a result in \cite{Hu2}, which calculates perverse ($IH$) signatures for a pseudomanifold with a single
smooth singular stratum as the sum of the signature on its complement (i.e., the signature of $M$)
and a set of terms arising from the second and higher pages in the Leray spectral sequence of 
the link bundle of the singular stratum.  In particular, if the spectral sequence degenerates at the 
second page, as it does in the case of a product bundle, all of these additional terms vanish, 
so all perverse signatures are simply the signature of $M$.

It remains to show that $\sigma_{IH} (X) = \sigma_{IH} (Z)$.
There are several ways to see this, for example as follows:
By Siegel's pinch bordism (cf. \cite{siegel} or \cite[Chapter 6.6]{banagl-tiss}),
$\sigma_{IH} (X)= \sigma_{IH} (Z) + \sigma_{IH} (E),$ where $E$ is the pseudomanifold
\[ E = (cL)\times \Sigma \cup_{L\times \Sigma} c(L\times \Sigma). \]
If $l=\dim L$ is odd, then Lemma 8.1 of \cite{bcs} implies that in fact already
the group $IH^{\olm}_{n/2} (E)$ is trivial. In particular, $\sigma_{IH} (E)=0$ and
$\sigma_{IH} (X) = \sigma_{IH} (Z)$.
If $l$ is even, then $\dim \Sigma$ is odd and thus $CT(X)$ is a Witt space.
(Note that $\dim \Sigma$ odd means in particular that $\dim \Sigma \geq 1$ and thus that
the singular set of $CT(X)$ has codimension at least $2$.)
Hence we may apply what we have proved so far to $X' = CT(X)$ and obtain
\[
\sigma_{HI}(CT(X)) = \sigma_{IH,\olm}(CT(CT(X))) 
=\sigma_{IH}(Z). 
\] 
Since $CT(CT(X))\cong X$ and $X$ is Witt, we have for the perverse signature
\[ \sigma_{IH,\olm}(CT(CT(X))) =\sigma_{IH} (X). \]

\section*{Acknowledgements}
The authors thank the Deutsche Forschungsgemeinschaft for funding the research visits during 
which much of this work was done. The second author thanks Timo Essig and Bryce Chriestenson 
for useful discussions.  In particular, the proof of Lemma \ref{HIkun} is partly due to Essig 
and the proof of Lemma \ref{HIMV} is due to Chriestenson.

\end{document}